\documentclass[12pt,reqno]{amsart}
\usepackage[margin=1in]{geometry}
%\usepackage{dsfont}
% MODIFYING AMSART.CLS:
\makeatletter
\def\section{\@startsection{section}{1}%
	\z@{.7\linespacing\@plus\linespacing}{.5\linespacing}%
	{\bfseries%\normalfont\scshape
		\centering
}}
\def\@secnumfont{\bfseries}
\makeatother
% END OF MODIFICATION OF AMSART.CLS.
\usepackage{amsmath,amssymb,amsthm,graphicx,amsxtra, setspace}
\usepackage[utf8]{inputenc}
\usepackage{mathrsfs}
\usepackage{alltt}
\usepackage{relsize}
\usepackage{hyperref}
\usepackage{aliascnt}
\usepackage{tikz}
\usepackage{mathtools}
\usepackage{multicol}
\usepackage{upgreek}
\usepackage{graphicx,type1cm,eso-pic,color}
\allowdisplaybreaks

\usepackage{pdfrender,xcolor}

\colorlet{darkblue}{blue!50!black}

\hypersetup{
	colorlinks,%
	citecolor=blue,%
	filecolor=red,%
	linkcolor=darkblue,%
	urlcolor=blue,%
	pdfnewwindow=true,%
	pdfstartview={FitH}
}

% THEOREM Environments ---------------------------------------------------
%\setlength{\textheight}{19.5 cm} \setlength{\textwidth}{12.5 cm}
\newtheorem{theorem}{Theorem}[section]
\newtheorem{lemma}[theorem]{Lemma}

\newtheorem{remark}[theorem]{Remark}

% MATH -----------------------------------------------------------

%\def\u{\textbf{X}^{n, \varepsilon}}

%\def\n{\textbf{X}_{\theta_n}}

\def\L{\mathrm{L}}

\def\X{\mathrm{X}}

\def\C{\mathrm{C}}

\def\D{\mathrm{D}}

\def\Y{\mathrm{Y}}

\def\N{\mathbb{N}}
\def\I{\mathrm{I}}

\def\H{\mathrm{H}}

\newcommand{\R}{\mathbb{R}}

% ----------------------------------------------------------------
%\pagenumbering{gobble}

\let\originalleft\left
\let\originalright\right
\renewcommand{\left}{\mathopen{}\mathclose\bgroup\originalleft}
\renewcommand{\right}{\aftergroup\egroup\originalright}

\newcommand{\vertiii}[1]{{\left\vert\kern-0.25ex\left\vert\kern-0.25ex\left\vert #1 
		\right\vert\kern-0.25ex\right\vert\kern-0.25ex\right\vert}}

\newcommand{\Addresses}{{% additional braces for segregating \footnotesize
		\footnote{
			%	\footnotesize
			
			\noindent \textsuperscript{1,2}Department of Mathematics, Indian Institute of Technology Roorkee-IIT Roorkee,
			Haridwar Highway, Roorkee, Uttarakhand 247667, INDIA.\par\nopagebreak
			\noindent  \textit{e-mail:} \texttt{Manil T. Mohan: maniltmohan@ma.iitr.ac.in, maniltmohan@gmail.com.}
			
			\textit{e-mail:} \texttt{Shri Lal Raghudev Ram Singh: raghudevram$\_$s@ma.iitr.ac.in.}
			
			\noindent \textsuperscript{*}Corresponding author.

			\textit{Key words:} generalized Korteweg-de Vries-Burgers-Huxley equation; monotonicity; Minty-Browder theorem; stabilization; boundary control.
			
			Mathematics Subject Classification (2020): 93D15, 93D23, 34H05, 35K51.

}}}

\begin{document}

	\title[Boundary control of generalized KdV-Burgers-Huxley equation]{Boundary control of generalized Korteweg-de Vries-Burgers-Huxley equation: Well-Posedness, Stabilization and  Numerical Studies\Addresses}
	\author[M. T. Mohan and S. L. Raghudev Ram Singh]
	{Manil T. Mohan\textsuperscript{1*} and Shri Lal Raghudev Ram Singh\textsuperscript{2}}

	\maketitle
	
	\begin{abstract}
		A boundary control problem for  the following generalized Korteweg-de Vries-Burgers-Huxley equation: $$u_t=\nu u_{xx}-\mu u_{xxx}-\alpha u^{\delta}u_x+\beta u(1-u^{\delta})(u^{\delta}-\gamma), \ x\in[0,1], \ t>0,$$ where $\nu,\mu,\alpha,\beta>0,$ $\delta\in[1,\infty)$, $\gamma\in(0,1)$  subject to Neumann boundary conditions is considered in this work. We first establish the well-posedness of the Neumann boundary value problem by an application of monotonicity arguments, the Hartman-Stampacchia theorem,  the Minty-Browder theorem,  and the Crandall-Liggett theorem.  The additional difficulties caused by the  third order linear term is successfully handled by proving a proper version of  the Minty-Browder theorem.  By using suitable  feedback boundary controls,  we demonstrate $\mathrm{L}^2$- and $\mathrm{H}^1$-stability properties of the closed-loop system for sufficiently large $\nu>0$. The analytical conclusions from this work are supported and validated by numerical investigations.
	\end{abstract}

	\section{Introduction}\label{sec1}\setcounter{equation}{0}  
	One of the most basic nonlinear mathematical models that illustrates the characteristics of both dispersion and dissipation is the Korteweg-de Vries-Burgers (KdVB) equation (\cite{ABMK,LBEC,ECJM,SMEC}). In this work, we consider a generalized Korteweg-de Vries-Burgers-Huxley equation which has the characteristics of \emph{dispersion, dissipation, convection and reaction effects}. 
	\subsection{The model} 
	One type of partial differential equation (PDE) for a shallow water wave with unidirectional motion is the \emph{Korteweg-de Vries (KdV) equation} (\cite{JLBRS}), 
	$$u_t=-\mu u_{xxx}-\alpha uu_x,\ \mu,\alpha>0,$$
	which is also a common representative of \emph{nonlinear dispersive equations}.  The \emph{viscous Burgers equation} (\cite{MK}), 
	$$u_t=\nu u_{xx}-\alpha uu_x, \ \nu,\alpha>0,$$ 
	a common representative of \emph{nonlinear dissipative equations} or \emph{convection-diffusion equation}, is used in a number of applied mathematics fields, including traffic flow, fluid mechanics, nonlinear acoustics, and gas dynamics. 	The case  of pure dispersion and pure dissipation is quite uncommon in many real-world physical problems. The \emph{Korteweg-de Vries-Burgers (KdVB)} equation is formed when a diffusion term is added to the KdV equation. The KdVB equations are given by (\cite{ABMK})
	$$u_t=\nu u_{xx}-\mu u_{xxx}-\alpha uu_x, \ \nu,\mu,\alpha>0.$$ It is discovered that this equation also represents a wide range of other physical phenomena, including lattice waves, the propagation of ion-acoustic waves in cold plasma, and pressure waves in liquid-gas bubble mixtures, etc. Actually, in addition to dispersion, several of these phenomena also show dissipative effects, which makes the so-called KdVB equation a better model (\cite{BCAG}).
		
		A nonlinear PDE that explains the interplay between reaction mechanisms, convection effects, and diffusion transports is the \emph{Burgers-Huxley equation} (\cite{XYW}):
	$$u_t=\nu u_{xx}-\alpha uu_x+\beta u(1-u)(u-\gamma),\ \nu,\mu,\alpha,\beta>0,\ \gamma\in(0,1).$$
	The \emph{Korteweg-de Vries-Burgers-Huxley (KdVBH)} equation is obtained by adding a reaction term similar to the one found in the Burgers-Huxley equation. We examine the following generalized version of KdVBH equation in this article:
	\begin{align}\label{1.1}
		u_t=\nu u_{xx}-\mu u_{xxx}-\alpha u^{\delta}u_x+\beta u(1-u^{\delta})(u^{\delta}-\gamma), \ x\in(0,1), \ t>0, \end{align}
where $\nu,\mu,\alpha,\beta>0,\ \gamma\in(0,1), \delta\in[1,\infty),$  with the initial condition
\begin{align}\label{1.2}
	u(x,0)=u_0(x),\ x\in[0,1]. 
\end{align}
	We call $\nu>0$ as the \emph{dissipation coefficient}, $\alpha>0$ as the \emph{convection coefficient} and $\mu>0$ as the \emph{dispersion coefficient.} The equation \eqref{1.1} can be considered as an example of a \emph{convection-diffusion-reaction equation of dispersive type.}

	Many authors have actively worked on the KdV and KdVB equations from a variety of angles (\cite{JLBSMS,EC,FLGP,TT}, etc., and references therein).  The aim of this work is to  analyze a stabilization problem by using a  boundary feedback control  for the system \eqref{1.1}-\eqref{1.2}. Similar problems for KdVB equation have been considered in the works \cite{ABMK,WJJ1}. 
	
	\subsection{The boundary control}
	Control problems for partial differential equations have been a focus of intense research for the past several decades. Generally speaking, there are two types of control procedures for partial differential equations: boundary control and distributed control. It is difficult for engineers to implement distributed control, which calls for actuators to be positioned at every point in the spatial domain (\cite{KWEF}). Boundary control is simple and inexpensive to implement; it only requires applying actuators on the edge of the spatial region.
	
	Let us now discuss some of the boundary control problems discussed in the literature for KdVB equation. For the forced KdVB equation, the author in \cite{BYZ} demonstrated that if the external forcing is time-periodic with small amplitude, then the problem also admits a unique time-periodic solution with the same period, and the solution is stable.  The boundary stabilization problem of KdVB equation is studied in \cite{ABMK,WJJ1}.  The authors of  \cite{ABMK,WJJ1} demonstrated well-posedness, global exponential stability in $\L^2$, global asymptotic  stability in $\H^1$, and semi-global exponential stability in $\L^2$ of  KdVB equation.   Nonlinear boundary stabilization for a generalized KdVB equation is examined in \cite{NSRA,NSRA1,NSAE}, etc.  By proposing an another class of adaptive controls, the authors in \cite{XDWC} established  well-posedness as well as  the $\L^2$-global exponential stability of the solutions of KdV and KdVB equations. By using suitable boundary controls, the authors in \cite{CJ,CJBY}  proved that the solutions of the KdVB equation  globally exist and globally exponentially tend to zero as $t\to\infty$ 	in a subspace of $\H^s$ for $s\in[0,3]$.  The authors in \cite{LBEC} developed two approaches for the stabilization of nonlinear KdV equation with boundary time-delay feedback. For more interesting problems on the control and stabilization of KdV and KdVB equations, we refer the interested readers to \cite{MA1,ECJM,EC,MoC,BCAG,FAG,FAG1,KWEF,TOAB,LR,LRBY,RS,SX}, etc., and references therein.

	The main aim of this article is to study the well-posedess of a boundary control problem for \eqref{1.1}-\eqref{1.2} and establish a feedback stabilization result. 	We use two different controls for $\delta=1$ and $\delta=2$. Motivated from \cite{ABMK}, keeping feedback stabilization in mind,  the  equation \eqref{1.1}-\eqref{1.2}  is associated with the following boundary conditions for $t\geq 0$: 
	
	For $\delta=1$, we consider  the control 
	\begin{equation}\label{1.3}
		\left\{
		\begin{aligned}
			u(0,t)&=0,\\ u_x(1,t)&=-\frac{1}{\nu}\left(\eta+\frac{\alpha^2}{\eta(\delta+2)^2}u^{2\delta}(1,t)\right)u(1,t)=-g_1(u(1,t)),\\ u_{xx}(1,t)&=\frac{\delta}{\nu^2}\left(\eta+\frac{\alpha^2}{\eta(\delta+2)^2}u^{2\delta}(1,t)\right)^2u(1,t)=g_2(u(1,t)), 
		\end{aligned}
		\right.
	\end{equation}
	where $\eta>0$, $g_1,g_2\in \C(\R)$ are nondecreasing functions with $g_1(0)=g_2(0)=0,$ and $g_2(k)k=\delta(g_1(k))^2$  representing the nonlinear flux feedback controls. 	 
	
	For $\delta=2$, we consider the control 
	\begin{equation}\label{1p3}
		\left\{
		\begin{aligned}
			u(0,t)&=0,\\ u_x(1,t)&=-\frac{1}{\nu}\left(\eta+\frac{\alpha}{(\delta+2)}u^{\delta}(1,t)\right)u(1,t)=-g_1(u(1,t)),\\ u_{xx}(1,t)&=\frac{\delta}{\nu^2}\left(\eta+\frac{\alpha}{(\delta+2)}u^{\delta}(1,t)\right)^2u(1,t)=g_2(u(1,t)), 
		\end{aligned}
		\right.
	\end{equation}
	where $\eta>0$.

	For all $u\in\H^3(0,1)$ satisfying \eqref{1.3}, an integration by parts yields  
	\begin{align}\label{1.4}
		(u_{xxx},u)&=-(u_{xx},u_x)+u_{xx}(1)u(1)-u_{xx}(0)u(0)\nonumber\\&=-\frac{1}{2}u_x^2(1)+u_{xx}(1)u(1)=-\frac{1}{2}(g_1(u(1)))^2+g_2(u(1))u(1)=\left(\delta-\frac{1}{2}\right)(g_1(u(1)))^2. 
	\end{align}
Similarly, 	 for all $u,v\in\H^3(0,1)$ satisfying \eqref{1.3} and for some $0<\theta<1$, we obtain 
	\begin{align}\label{1.5}
	&	((u-v)_{xxx},u-v)\nonumber\\&=-\frac{1}{2}(u_x(1)-v_x(1))^2+(u_{xx}(1)-v_{xx}(1))(u(1)-v(1))\nonumber\\&=-\frac{1}{2}\left(g_1(u(1))-g_1(v(1))\right)^2+\left(g_2(u(1))-g_2(v(1))\right)(u(1)-v(1))\nonumber\\&=-\frac{1}{2\nu^2}(u(1)-v(1))^2\bigg\{\eta^2+\frac{2\eta\alpha(\delta+1)}{(\delta+2)}(\theta u(1)+(1-\theta)v(1))^{\delta}\nonumber\\&\quad+\frac{\alpha^2(\delta+1)^2}{(\delta+2)^2}(\theta u(1)+(1-\theta)v(1))^{2\delta}\bigg\}+\frac{\delta}{\nu^2}(u(1)-v(1))^2\bigg\{\eta^2\nonumber\\&\quad+\frac{2\eta\alpha(\delta+1)}{(\delta+2)}(\theta u(1)+(1-\theta)v(1))^{\delta}+\frac{\alpha^2(2\delta+1)}{(\delta+2)^2}(\theta u(1)+(1-\theta)v(1))^{2\delta}\bigg\}\nonumber\\&=\frac{1}{\nu^2}(u(1)-v(1))^2\bigg\{\left(\delta-\frac{1}{2}\right)\left[\eta^2+\frac{2\eta\alpha(\delta+1)}{(\delta+2)}(\theta u(1)+(1-\theta)v(1))^{\delta}\right]\nonumber\\&\quad+\frac{\alpha^2(3\delta^2-1)}{2(\delta+2)^2}(\theta u(1)+(1-\theta)v(1))^{2\delta}\bigg\}.%\nonumber\\&=\frac{1}{2}(u(1)-v(1))^2\bigg\{\left(\eta+\frac{\alpha(\delta+1)}{(\delta+2)}(\theta u(1)+(1-\theta)v(1))^{\delta}\right)^2-\frac{2\alpha^2\delta^2}{(\delta+2)^2}(\theta u(1)+(1-\theta)v(1))^{2\delta}\bigg\}.
	\end{align}
	It can be seen that $((u-v)_{xxx},u-v)\geq 0$. A similar calculation can be performed for $\delta=1$ also. %provided $\eta>0$ for $1\leq \delta\leq\sqrt{2}+1$ and $\eta>\frac{((\sqrt{2}-1)\delta-1)}{\delta+2}\alpha(\theta u(1)+(1-\theta)v(1))^{\delta}$ (sufficiently large) for $\sqrt{2}+1<\delta<\infty$.

	\begin{remark}\label{rem1.1}
		For the case of any $\nu>0$, we are providing the controls for $\delta=1$ and $\delta=2$ in \eqref{1.3} and \eqref{1p3}, respectively, keeping in mind that one may extend to other values of $\delta$ also in the future. For $\delta\in[1,\infty)$ and  large values of $\nu$, one can  use the control given in  \eqref{256} below. For  small values of $\nu>0,$ we restrict ourselves to $\delta=1,2$ due to the unavailability of a well-posedness result for other values of $\delta$ (see Theorem \ref{thm2.5} below). 
		
		Moreover, a control like  the following also stabilizes the system \eqref{1.1}-\eqref{1.2} (\cite{WJJ1}):
		\begin{align}\label{17}
			u(0,t)=0,\ u_x(1,t)=0,\ u_{xx}(1,t)=g(u(1,t)).
		\end{align}
In this case, the condition \eqref{1.5} reduces to $\left(g(u(1))-g(v(1))\right)(u(1)-v(1))\geq 0$, that is, $g(\cdot)$ needs to be a monotonicially increasing function. 	But the rate of convergence is slower than the controls considered in \eqref{1p3} and \eqref{1.3}. Examples of such $g(\cdot)$ are 
	\begin{align}\label{18}
	 \frac{1}{\mu}\left(\eta+\frac{\alpha^2}{\eta(\delta+2)^2}u^{2\delta}(1,t)\right)u(1,t) \ (\text{for }\ \delta=1) \ \text{ and }\ \frac{\eta}{\mu}u(1,t) \ (\text{for }\ \delta=2),
		\end{align}
for some $\eta>0$. 
%	\textcolor{red}{	One can consider the following feedback control law also:
%		\begin{align}
%			\left\{
%			\begin{aligned}
%				u(0,t)&=0,\\ u_x(1,t)&=-\frac{\eta}{\nu}u(1,t),\\ u_{xx}(1,t)&=\eta u(1,t)+\frac{\eta^2}{2\nu^2}u(1,t)-\frac{\alpha}{\mu(\delta+2)}u^{\delta+1}(1,t),
%			\end{aligned}
%			\right.
%		\end{align}
%		where $\eta>0$.}
		\end{remark}

			Three different adaptive control laws  are designed in \cite{NSAE1} to show the  $\L^2$-global exponential stability  for   the adaptive control problem of a forced generalized KdVB equation  	when either the kinematic viscosity and/or the dynamic viscosity are unknown. But the controls used in \cite{NSAE1}  may not be useful for our case (even for non-adaptive control problem) as the condition \eqref{1.5} may not be satisfied by such controls (see Remark \ref{rem1.1}). The monotonicity condition \eqref{1.5} is crucial in establishing the well-posedness of the boundary control problem for the generalized KdVBH equation \eqref{1.1}-\eqref{1.2}.

		\subsection{Difficulties, approaches  and novelties} The major difficulty of the work lies in establishing the well-posedness of the  generalized KdV-Burgers-Huxley equation \eqref{1.1}-\eqref{1.2} with the controls \eqref{1p3} and \eqref{1.3} for $\delta=1$ and $\delta=2$, respectively. As the problem \eqref{1.1} is of the third order dispersive type,  there are limitations in using the standard Minty-Browder result. The classical result on Minty-Browder Theorem states that  any monotone, hemicontinuous and strongly coercive operator $T : \X\to\X^*$,  where $\X$ is a reflexive Banach space, is onto, that is, $\mathrm{Range}(T)=\X^*$ (\cite[Theorem 3.3.1]{GDJM}).  By properly defining hemicontinuity and coercivity in our context (see \eqref{2k6} and \eqref{2p7} below) and applying the infinite-dimensional version of the Hartman-Stampacchia theorem (\cite[Theorem 1.4, Chapter III]{DKGS}), we show the same result for $T : \X\to\Y^*,$ that is, $\mathrm{Range}(T)=\Y^*$ (Theorem \ref{thm2.2}), where $\Y$ is also a reflexive Banach space such that the embedding $\X\subset\Y$ is dense (Lemma \ref{lem2p1}). We hope that the abstract result obtained in Theorem \ref{thm2.2} can be used to prove the well-posedness of various systems of the type \eqref{1.1}. 
		
		Another major difficulty is the  restriction of $\delta\in\{1,2\}$ for any $\nu>0$. This restriction is due to the choice of our controls given in \eqref{1.3}  and \eqref{1p3} and the lack of existence and uniqueness results for the other values of $\delta$ (see \eqref{244} in the proof of Theorem \ref{thm2.5}).  For $\delta\in\{1,2\}$, we use monotonicity arguments (Lemma \ref{lem2.1}),  the Minty-Browder theorem (Theorem \ref{thm2.2}), and the Crandall-Liggett theorem to prove  the existence and uniqueness of strong solutions for a cutoff problem (see \eqref{225} below). Then, by using uniform energy estimates, we show the global solvability of   the problem \eqref{1.1}-\eqref{1.2} with the controls \eqref{1p3} and \eqref{1.3} for $\delta=1$ and $\delta=2$, respectively (Theorem \ref{thm2.5}).  The coercivity results (see \eqref{215} and \eqref{2k12} below) immediately help us to obtain the $\L^2$-exponential stabilization of the  problem \eqref{1.1}-\eqref{1.2} under the assumption $\nu>\frac{\beta}{4}(1-\gamma)^2$ (Theorem \ref{thm2.3}). But under a restrictive  assumption on $\nu$, that is,  $\nu>\frac{\alpha^2}{2\beta}(\delta+2)^{-\frac{2(\delta+1)}{\delta+2}}$, we observe that the convective term can be handled by using the diffusion as well as reaction terms for any $\delta\in[1,\infty)$  and the well-posedness can be established.  For sufficiently large $\nu$ (see \eqref{2p55} below),  we are also able to obtain $\L^2$, $\H^1$ and pointwise exponential stabilization results (Theorems \ref{thm2.4}, \ref{thm3.8} and \ref{thm3.10}).

		\subsection{Organization of the paper} The rest of the paper is organized as follows: The next section deals with the well-posedness results for the problem \eqref{1.1}-\eqref{1.2} with the controls \eqref{1p3} and \eqref{1.3} for $\delta=1$ and $\delta=2$, respectively. We show the well-posedness of the Neumann boundary value problem (Theorem \ref{thm2.5}) by an application of monotonicity arguments,  the Minty-Browder theorem (Theorem \ref{thm2.2}), the Hartman-Stampacchia theorem \cite[Theorem 1.4, Chapter III]{DKGS}  and the Crandall-Liggett theorem (\cite[Theorems 5.1, 5.2]{JAW}).  In section \ref{sec3}, the $\L^2$-exponential stabilization result for the above problem with $\nu>\frac{\beta}{4}(1-\gamma)^2$ is established  (Theorem \ref{thm2.3}). Under further assumption on $\nu$  (see \eqref{2p55} below), $\L^2$, $\H^1$ and pointwise exponential stabilization results for the problem \eqref{1.1}-\eqref{1.2} with the control \eqref{256} is obtained in Theorems \ref{thm2.4}, \ref{thm3.8} and \ref{thm3.10}, respectively.  Numerical investigations in section \ref{sec4} validate and corroborate the analytical conclusions drawn in section \ref{sec2}. An example is also provided to show that that controls given in \eqref{1.3} and \eqref{1p3} converge much faster than the one given in \eqref{18}.

	\section{Well-posedness and Stabilization}\label{sec2}\setcounter{equation}{0}  
 In order to formulate the problem \eqref{1.1}-\eqref{1.3} as an abstract initial value problem,	we consider the Hilbert space $\mathrm{H}=\mathrm{L}^2(0,1)$ and the operator $\mathscr{A}:\D(\mathscr{A})\subset\H\to\H$  as
\begin{align}\label{2.1}
	\mathscr{A}(v):=	-\nu v_{xx}+\mu v_{xxx}+\frac{\alpha}{\delta+1} (v^{\delta+1})_x-\beta v(1-v^{\delta})(v^{\delta}-\gamma),
\end{align}
and the domain 
\begin{align}\label{2.2}
	\D(\mathscr{A}):=\left\{u\in\H^3(0,1):u(0)=0,\ u_x(1)=-g_1(u(1)),\ u_{xx}(1)=g_2(u(1))\right\}.
\end{align}
%Let us define the operators, $\mathscr{A}_1:\D(\mathscr{A}_1)\subset\H\to\H$ by $$\mathscr{A}_1(v)=-\nu_1v_{xx}+\mu v_{xxx}$$ with the domain $\D(\mathscr{A}_1)=\D(\mathscr{A}),$ and $\mathscr{A}_2:\D(\mathscr{A}_2)\subset\H\to\H$  by $$\mathscr{A}_2(v)=-\nu_2v_{xx}+\frac{\alpha}{\delta+1} (v^{\delta+1})_x-\beta v(1-v^{\delta})(v^{\delta}-\gamma)$$ with the domain $\D(\mathscr{A}_2)=\left\{u\in\H^2(0,1):u(0)=0, u_x(1)=-g_1(u(1))\right\}$, where $\nu_1,\nu_2>0$ are such that $\nu=\nu_1+\nu_2$. Note that $\mathscr{A}(v)=\mathscr{A}_1(v)+\mathscr{A}_2(v)$ for all $v\in\D(\mathscr{A})$.
With the above notations, the problem \eqref{1.1}-\eqref{1.3} can be reformulated as 
\begin{equation}\label{2.3}
	\left\{
	\begin{aligned}
		\frac{du(t)}{dt}+\mathscr{A}(u)(t)&=0,\ t>0,\\
		u(0)&=u_0. 
	\end{aligned}\right.
\end{equation}
	Performing an integration by parts in \eqref{2.1},  for each $v\in\D(\mathscr{A})$, we have
\begin{align}\label{2.6}
	(\mathscr{A}(v),w)&=\nu(v_x,w_x)+\nu g_1(v(1))w(1)-\mu(v_{xx},w_x)+\mu g_2(v(1))w(1)\nonumber\\&\quad+\frac{\alpha}{\delta+1}\left[v^{\delta+1}(1)w(1)-(v^{\delta+1},w_x)\right]-\beta (v(1-v^{\delta})(v^{\delta}-\gamma),w),
\end{align}
for all $$w\in \mathrm{X}:=\left\{u\in\H^2(0,1):u(0)=0,u_x(1)=-g_1(u(1))\right\}.$$ Let us define $\Y:=\left\{u\in\H^1(0,1):u(0)=0\right\}$. Clearly $\X\subset\Y$ and the embedding is continuous.  
Let us first prove the following technical result: 
%Moreover, we have 
%\begin{align}
%	(\mathscr{A}(v),-v_{xx})&=\nu\|v_{xx}\|_{\L^2}^2-\frac{\mu}{2}g_2^2(v(1))+\frac{\mu}{2}v_{xx}^2(0)-\alpha(v^{\delta}v_x,v_{xx})+\beta(1+\gamma)(v^{\delta+1},v_{xx})\nonumber\\&\quad+\beta\gamma\|v_x\|_{\L^2}^2+\beta\gamma g_1(v(1))v(1)+\beta(2\delta+1)\|v^{\delta}v_x\|_{\L^2}^2+\beta g(v(1))v^{2\delta+1}(1), 
%\end{align}
%for all $v\in\D(\mathscr{A})$. 
%We also define $$\Y:=\left\{u\in\H^1(0,1):u(0)=0\right\}.$$  It is immediate that $\X\subset\Y$ and \textcolor{red}{the embedding is dense.  Since the embedding $\H^2(0,1)\subset\H^1(0,1)$ is dense,   for any $v\in\H^1(0,1)$, there exists a sequence  $\{v_n\}_{n\in\N}\subset\H^2(0,1)$ such that $\|v_n-v\|_{\H^1}\to 0$ as $n\to\infty$.  Therefore, for any $v\in\Y$, we can choose $v_n\in\X$ such that $v_n(0)=0$ and $v_{n,x}(1)=-g_1(v_n(1))$ (since $\H^2(0,1)\subset\C^1([0,1])$) such that $\|v_n-v\|_{\H^1}\to 0$ as $n\to\infty$.}
\begin{lemma}\label{lem2p1}
	The space $\X$ is reflexive and is a dense subspace of $\Y$. 
\end{lemma}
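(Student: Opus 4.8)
The plan is to treat the two assertions separately, the reflexivity being essentially a soft functional-analytic fact and the density being the step that requires a construction at the endpoint $x=1$.

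First I would record the ambient reflexivity. Both $\H^1(0,1)$ and $\H^2(0,1)$ are Hilbert spaces, hence reflexive, and the one-dimensional Sobolev embedding $\H^1(0,1)\hookrightarrow\C([0,1])$ shows that the trace maps $u\mapsto u(0)$, $u\mapsto u(1)$ on $\H^1$ and $u\mapsto u_x(1)$ on $\H^2$ are bounded linear functionals. Consequently $\Y=\{u\in\H^1(0,1):u(0)=0\}$ is the kernel of a bounded linear functional, i.e. a closed subspace of the Hilbert space $\H^1(0,1)$, and is therefore itself a reflexive (indeed Hilbert) space. For $\X$ the constraint $u_x(1)=-g_1(u(1))$ is nonlinear (since $g_1$ is not homogeneous of degree one), so $\X$ is not a linear subspace; I would therefore establish the reflexivity in the operative sense, by regarding $\X$ as a subset of the reflexive space $\H^2(0,1)$ and showing that it is weakly sequentially closed there. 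Given a bounded sequence $(u_n)\subset\X$, reflexivity of $\H^2(0,1)$ yields a subsequence $u_{n_k}\rightharpoonup u$ in $\H^2(0,1)$; the compact embedding $\H^2(0,1)\hookrightarrow\C^1([0,1])$ then gives $u_{n_k}\to u$ in $\C^1([0,1])$, so that $u_{n_k}(0)\to u(0)$, $u_{n_k}(1)\to u(1)$ and $(u_{n_k})_x(1)\to u_x(1)$. Passing to the limit in $u_{n_k}(0)=0$ and, using $g_1\in\C(\R)$, in $(u_{n_k})_x(1)=-g_1(u_{n_k}(1))$, shows $u\in\X$. This weak compactness of bounded subsets is precisely the property of reflexive spaces on which the Minty--Browder/Hartman--Stampacchia arguments rely.

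For the density of $\X$ in $\Y$ (with respect to the $\H^1$-norm), fix $v\in\Y$ and $\varepsilon>0$. Since smooth functions vanishing at $0$ are dense in $\Y$, I would first pick $\phi\in\C^\infty([0,1])$ with $\phi(0)=0$ and $\Vert\phi-v\Vert_{\H^1}<\varepsilon/2$. The only obstruction to $\phi\in\X$ is the endpoint identity at $x=1$, which I would repair with an $\H^1$-small, $\H^2$-valued bump supported near $x=1$. Writing $c:=-g_1(\phi(1))-\phi_x(1)$ and, for small $\tau\in(0,1)$, setting $\psi_\tau(x):=c\tau\,s^2(s-1)$ with $s:=(x-(1-\tau))/\tau$ on $[1-\tau,1]$ and $\psi_\tau\equiv0$ on $[0,1-\tau]$, one checks that $\psi_\tau\in\H^2(0,1)$ (it is $\C^1$ with bounded piecewise-continuous second derivative, matching the zero extension to first order at $x=1-\tau$), that $\psi_\tau(1)=0$ and $(\psi_\tau)_x(1)=c$, and that $\Vert\psi_\tau\Vert_{\H^1}\to0$ as $\tau\to0$ (indeed $\Vert\psi_\tau\Vert_{\L^2}^2=O(\tau^3)$ and $\Vert(\psi_\tau)_x\Vert_{\L^2}^2=O(\tau)$). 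Then $w:=\phi+\psi_\tau$ satisfies $w\in\H^2(0,1)$, $w(0)=0$, $w(1)=\phi(1)$ and $w_x(1)=\phi_x(1)+c=-g_1(\phi(1))=-g_1(w(1))$, so $w\in\X$, while $\Vert w-v\Vert_{\H^1}\le\Vert\phi-v\Vert_{\H^1}+\Vert\psi_\tau\Vert_{\H^1}<\varepsilon$ once $\tau$ is small. This proves that $\X$ is dense in $\Y$.

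The only genuinely delicate point is the nonlinearity of the boundary constraint: it is what prevents $\X$ from being a subspace (so that reflexivity must be read as weak closedness in $\H^2$) and what forces the explicit correction $\psi_\tau$ in the density argument rather than a one-line linear projection. Both difficulties are dissolved by the one-dimensional compact embedding $\H^2(0,1)\hookrightarrow\C^1([0,1])$, which makes the pointwise trace conditions stable under weak $\H^2$-limits, and by the observation that a prescribed change of $u_x(1)$ can be realized by a bump whose $\H^1$-norm shrinks together with its support.
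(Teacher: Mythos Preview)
Your density argument is correct and follows the same idea as the paper: approximate $v\in\Y$ first by an $\H^2$-function vanishing at $0$, then add an $\H^1$-small bump supported near $x=1$ to enforce the constraint $w_x(1)=-g_1(w(1))$. The paper uses the explicit family $\varphi_m(x)=x(1-x)^m$ (reflected through $x\mapsto 1-x$) in place of your localised cubic $\psi_\tau$, but the mechanism---a correction that adjusts $u_x(1)$ without changing $u(1)$ and whose $\H^1$-norm tends to zero---is identical.

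Where your treatment genuinely differs is the reflexivity part. The paper argues that $\X$ is a closed \emph{subspace} of the Hilbert space $\H^2(0,1)$ and then invokes ``closed subspaces of reflexive spaces are reflexive.'' You correctly observe that, since $g_1$ is nonlinear, the constraint $u_x(1)=-g_1(u(1))$ does not define a linear subspace, so that argument is literally inapplicable; you instead establish weak sequential closedness of $\X$ in $\H^2(0,1)$ via the compact embedding $\H^2\hookrightarrow\C^1$, which is exactly the property used downstream (weak compactness of bounded closed convex subsets in the Hartman--Stampacchia step). In this respect your argument is more careful than the paper's, at the cost of not quite matching the statement as phrased (``$\X$ is reflexive'' presupposes a Banach-space structure that $\X$ does not have). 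Both yield what is actually needed for Theorem~\ref{thm2.2}.
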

\begin{proof}
	Remember that closed subspaces of reflexive Banach spaces are reflexive (\cite[Theorem 5.9]{SK}). As $\H^2(0,1)$ is a Hilbert space, in order to prove $\X$ is reflexive,  it is enough to show that $\X$ is closed.
	\vskip 0.1 cm
	\noindent
	\textbf{Claim:} \emph{$\X$ is a closed subspace of $\H^2(0,1)$.} Let $u_n\in\X$ bs such that $\|u_n-u\|_{\H^2}\to 0$ as $n\to\infty$. Clearly $u\in\H^2(0,1)$, $u_n(0)=0$ and $u_{n,x}(1)=-g_1(u_n(1))$. We need to show that $u(0)=0$ and $u_{x}(1)=-g_1(u_(1))$. The convergence $\|u_n-u\|_{\H^2}\to 0$ implies $\|u_n-u\|_{\H^1}\to 0$, and since $\H^1(0,1)\hookrightarrow\C([0,1])$, we deuce $u_n(x)\to u(x)$ for all $x\in[0,1]$ and $u\in\C([0,1])$ with $u(0)=0$. Moreover, it is immediate that $u_n(1)\to u(1)$ and since $g_1$ is continuous function, we get $g_1(u_n(1))\to g_1(u(1))$ as $n\to\infty$. Since $\H^1(0,1)\hookrightarrow\C^1([0,1])$, one can easily deduce that $u_{n,x}(x)\to u_x(x)$ for all $x\in[0,1]$ and by using the uniqueness of limit, we deduce $u_x(1)=-g_1(u(1))$. Therefore,  $u\in\X$ and hence $\X$ is a closed subspace of $\H^2(0,1)$. 
	
	\vskip 0.1 cm
	\noindent
	\textbf{Claim:} \emph{$\X$ is a dense subspace of $\Y$.}
	Fix any $v\in\Y$, that is, $v\in\H^1(0,1)$ and $v(0)=0$. As a subspace of $\H^1(0,1)$, the space $\Y$ inherits the $\H^1$-norm, but as $v(0)=0$, the $\Y$-norm is equivalent to $\|v_x\|_{\L^2}$ for all $v\in\Y$. It is just a simple application of  the Poincar\'e inequality (see Lemma \ref{lem21} below).   Since the embedding $\H^2(0,1)\subset\H^1(0,1)$ is dense,   there exists a sequence  $\{v_m\}_{m\in\N}\subset\H^2(0,1)$ such that $\|v_m-v\|_{\H^1}\to 0$ as $m\to\infty$.  Since $\H^2(0,1)\hookrightarrow\C^1([0,1])$,  we choose the sequence $\{v_m\}_{m\in\N}$ in such a way that $v_{m,x}(1)\to 0$ as $m\to\infty$. By defining $w_m(x)=v_m(x)-v_m(0)$, for all $x\in[0,1]$, we find $w_m(0)=0$ and $\|w_m-v\|_{\Y}=\|(v_{m}-v_m(0)-v)_x\|_{\L^2}=\|v_{m,x}-v_x\|_{\L^2}\to 0$ as $m\to\infty$. Therefore, the space $\{v\in\H^2(0,1):v(0)=0\}$ is dense in $\Y$. 
	
	Let us now show that $\X$ is dense in $\Y$.  We consider for $m\geq 2$, $\varphi_m(x)=x(1-x)^m$. Then, it is clear that $\varphi_m(0)=\varphi_m(1)=\varphi_{m,x}(1)=0$ and $\varphi_{m,x}(0)=1$. Then, by using properties of the beta function, it is immediate that 
	\begin{align*}
		\|\varphi_m\|_{\Y}^2=\|\varphi_{m,x}\|_{\L^2}^2=\frac{m}{(2m+1)(2m-1)}\to 0\ \text{ as }\ m\to\infty. 
	\end{align*}
Since $m\geq 2$, it is clear that $\varphi_m\in\H^2(0,1)$. Let us  define 
\begin{align*}
	z_m(x):=w_m(x)+(w_{m,x}(1)+g_1(w_m(1)))\varphi_m(1-x).
	\end{align*}
	Then $z_m\in\H^2(0,1)$ for all $m\in\N$ with $z_m(0)=0$ and $z_{m,x}(1)=-g_1(w_m(1))=-g_1(z_m(1))$, so that $z_m\in\X$. Since $\H^1(0,1)\hookrightarrow\C([0,1])$, the convergence $\|w_m-v\|_{\Y}\to 0$ as $m\to\infty$ implies that $w_m(x)\to v(x)$ as $m\to\infty$ for all $x\in[0,1]$. Since $g_1(\cdot)$ is a continuous function $g_1(w_m(1))\to g_1(v(1))$ as $m\to\infty$. Therefore $|g_1(w_m(1))|\leq K_1$ for all $m\in\N$. Note that  $w_{m,x}(1)=v_{m,x}(1)\to 0$ as $m\to\infty$. Thus, it can be easily seen that 
	\begin{align*}
		\|z_m-v\|_{\Y}&=\|z_{m,x}-v_x\|_{\L^2}\leq \|w_{m,x}-v_x\|_{\L^2}+(|w_{m,x}(1)|+|g_1(w_m(1))|)\|\varphi_{m,x}\|_{\L^2}\nonumber\\&\to 0\ \text{ as }\ m\to\infty,
	\end{align*}
which completes the proof. 
\end{proof}

\begin{remark}
Since the continuous embedding  $\X\subset\Y$ is dense, from \cite[Lemma 2.2.27]{LGNSP}, we infer that the embedding $\Y^*\subset\X^*$ is continuous and the reflexivity of  $\X$ implies that $\Y^*$ is dense in $\X^*$. 
\end{remark}

\subsection{Abstract result}
 For a mapping $T:\X\to\Y^*$, the following result is a slight modification of the results presented in \cite[Corollary 2.1.3.]{VB}, \cite[Theorem 3.3.1]{GDJM}, etc., where the mapping  $T:\X\to\X^*$. Therefore, we are providing a proof here (Minty's theorem). 

\begin{theorem}\label{thm2.2}
	Let $\X$ and $\Y$ be reflexive Banach spaces such that the continuous embedding $\X\hookrightarrow\Y$ is dense. Let $T:\X\to\Y^*$  be monotone, that is, \begin{align}\label{2p5}\langle T(u)-T(v),u-v\rangle\geq 0\ \text{ for all }\ u,v\in\X,\end{align}\label{2k6} hemicontinuous, that is,  \begin{align}\label{2p6}\lim\limits_{\lambda\to 0}\langle T(u_1+\lambda u_2),v\rangle=\langle T(u_1),v\rangle,\ \text{ for all }\ u_1,u_2\in\X,\ \text{ and }\ v\in\Y,\end{align}  and  coercive, that is,  \begin{align}\label{2p7}\lim\limits_{\|u\|_{\Y\to\infty}}\frac{\langle T(u),u\rangle}{\|u\|_{\Y}}=\infty, \ \text{ for all }\ u\in\X, \end{align} operator. Then $T$ is onto, that is, $\mathrm{Range}(T)=\Y^*$.
\end{theorem}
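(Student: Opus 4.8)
The plan is to establish surjectivity by solving, for an arbitrary fixed $f\in\Y^*$, the equation $T(u)=f$ through a finite-dimensional (Galerkin-type) reduction followed by a monotonicity passage to the limit in the spirit of Minty. First I would reformulate the target: since $\X$ is dense in $\Y$ by Lemma \ref{lem2p1} and both $T(u)$ and $f$ are continuous functionals on $\Y$, it suffices to produce $u\in\X$ with $\langle T(u)-f,v\rangle=0$ for every $v\in\X$; because the induced embedding $\Y^*\hookrightarrow\X^*$ is injective (the Remark following Lemma \ref{lem2p1}), this is equivalent to $T(u)=f$ in $\Y^*$. Thus it is enough to construct a single $u\in\X$ that tests correctly against all of $\X$.

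For the construction I would use reflexivity of $\X$ together with a sequence $\{e_k\}_{k\in\N}\subset\X$ whose finite spans $\X_n:=\mathrm{span}\{e_1,\dots,e_n\}$ are dense in $\X$, and seek $u_n\in\X_n$ solving $\langle T(u_n),e_j\rangle=\langle f,e_j\rangle$ for $j=1,\dots,n$. Writing $u=\sum_k\xi_k e_k$ and $(\Phi_n(\xi))_j=\langle T(u)-f,e_j\rangle$, the monotonicity \eqref{2p5} and hemicontinuity \eqref{2p6} make $\Phi_n:\R^n\to\R^n$ continuous (a monotone, hemicontinuous operator is demicontinuous, which on the finite-dimensional $\X_n$ yields the required continuity), while the coercivity \eqref{2p7} gives $\langle\Phi_n(\xi),\xi\rangle=\langle T(u)-f,u\rangle>0$ as soon as $\|u\|_{\Y}$, equivalently $|\xi|$, is large. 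The finite-dimensional Hartman--Stampacchia / Brouwer acute-angle lemma then supplies a zero $u_n\in\X_n$; this is precisely where the infinite-dimensional Hartman--Stampacchia theorem \cite[Theorem 1.4, Chapter III]{DKGS} enters in place of the usual Galerkin step.

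Testing the finite system against $u_n$ gives $\langle T(u_n),u_n\rangle=\langle f,u_n\rangle\le\|f\|_{\Y^*}\|u_n\|_{\Y}$, so \eqref{2p7} forces a uniform bound $\|u_n\|_{\Y}\le M$, and reflexivity of $\Y$ yields $u_n\rightharpoonup u$ weakly in $\Y$ along a subsequence. To identify the limit I would run Minty's argument: for each $w\in\X$, monotonicity gives $\langle T(w)-T(u_n),w-u_n\rangle\ge 0$, and letting $n\to\infty$ (using $\langle T(u_n),u_n\rangle\le\langle f,u_n\rangle\to\langle f,u\rangle$, the convergence $\langle T(w),u_n-w\rangle\to\langle T(w),u-w\rangle$ since $T(w)\in\Y^*$, and $\langle T(u_n),w\rangle\to\langle f,w\rangle$ from the Galerkin relations) produces $\langle f-T(w),u-w\rangle\ge 0$ for all $w\in\X$. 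Substituting $w=u-\lambda z$ for $z\in\X$ and $\lambda>0$, dividing by $\lambda$, and sending $\lambda\downarrow 0$ via hemicontinuity \eqref{2p6} would then give $\langle f-T(u),z\rangle\ge 0$ for all $z\in\X$, hence $T(u)=f$.

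The main obstacle is the mismatch between the space $\X$ on which $T$ is defined and the weaker norm $\|\cdot\|_{\Y}$ in which the coercivity \eqref{2p7} and the a priori bounds are expressed: the uniform estimate controls only $\|u_n\|_{\Y}$, so the natural weak limit a priori lives in $\Y$, whereas the Minty substitution $w=u-\lambda z$ and the very meaning of $T(u)$ require $u\in\X$. This is exactly the reason the classical statement for $T:\X\to\X^*$, in which coercivity is taken in the $\X$-norm, cannot be quoted verbatim, and reconciling the two is the delicate heart of the proof. I would handle it by extracting the solution inside $\X$ from the outset: applying the Hartman--Stampacchia theorem on closed balls of the reflexive space $\X$ produces variational-inequality solutions that belong to $\X$, and the $\Y$-coercivity combined with the anchoring $\langle f,v\rangle\le\|f\|_{\Y^*}\|v\|_{\Y}$ controls the truncation parameter; the dense continuous embedding $\X\hookrightarrow\Y$ and the density of $\Y^*$ in $\X^*$ then let the $\Y$-controlled limit be reinterpreted as an identity in $\Y^*$, so that the limit functional is realized as $T(u)$ with $u\in\X$ rather than merely as an element of $\Y$.
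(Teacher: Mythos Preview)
Your Galerkin-plus-Minty strategy is a natural first attempt, and you diagnose its defect accurately: the a priori estimate $\langle T(u_n),u_n\rangle=\langle f,u_n\rangle\le\|f\|_{\Y^*}\|u_n\|_\Y$ together with \eqref{2p7} bounds only $\|u_n\|_\Y$, so the weak limit $u$ lives in $\Y$ and there is no reason for $u\in\X$. At that point $T(u)$ is undefined and the Minty substitution $w=u-\lambda z$ collapses. This is not a technicality that density or an extra compactness fixes; without control of $\|u_n\|_\X$ the Galerkin route simply does not close. The last paragraph of your proposal therefore abandons Galerkin and switches to variational inequalities on $\X$-balls, but that switch carries its own version of the same difficulty, which you do not address: if $u_R$ solves the variational inequality on $\{u\in\X:\|u\|_\X\le R\}$, coercivity again only pins down $\|u_R\|_\Y$, so $u_R$ may well sit on the $\X$-boundary $\|u_R\|_\X=R$, and then you cannot perturb $w=u_R+\lambda v$ inside the ball to turn the inequality into an equality.

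The paper resolves this by changing the constraint set. It applies Hartman--Stampacchia on the bounded set $\mathbb{K}_R=\{u\in\X:\|u\|_\X\le R\}$ to get a first variational-inequality solution, but then invokes \cite[Theorem~1.7, Chapter~III]{DKGS} (which needs monotonicity and a coercivity-type anchoring at an interior point) to upgrade to a solution $u_0$ of the variational inequality on the \emph{unbounded} set $\mathbb{K}=\{u\in\X:\|u\|_\Y\le R\}$. The point of using $\mathbb{K}$ rather than an $\X$-ball is that coercivity \eqref{2p7} now does give interiority: one shows $\|u_0\|_\Y<R$, so for any $v\in\X$ with $\|v\|_\Y=1$ and small $|\lambda|$ the test function $w=u_0+\lambda v$ stays in $\mathbb{K}$, yielding $\langle T(u_0)-f,v\rangle=0$ for all such $v$; density of $\X$ in $\Y$ then gives $T(u_0)=f$ in $\Y^*$. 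The step you are missing is precisely this passage from a bounded $\X$-ball to the $\Y$-constrained (but $\X$-unbounded) set $\mathbb{K}$, which is what allows the $\Y$-coercivity to be cashed in as interiority.
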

\begin{proof}
	Let $f\in \Y^*$. We have for all $\X\ni u\neq 0$, 
	\begin{align}
		\langle T(u)-f,u\rangle \geq \|u\|_{\Y}\left[\frac{\langle T(u),u\rangle}{\|u\|_{\Y}}-\|f\|_{\Y^*}\right].
	\end{align}
	Since $T$ is coercive in the sense of \eqref{2p7}, for any $N>0$, there exists an $R>0$ such that $\frac{\langle T(u),u\rangle}{\|u\|_{\Y}}>N$ for all $\|u\|_{\Y}\geq R$. Therefore, for $N>\|f\|_{\Y^*}$, there exists an $R>0$ such that 
	\begin{align}\label{2p22}
		\langle T(u)-f,u\rangle >0 \ \text{ for all }\  \|u\|_{\Y}\geq R.
	\end{align}
	
	Let us define $\mathbb{K}:=\left\{u\in\X:\|u\|_{\Y}\leq R\right\}$. Then $\mathbb{K}$ is a closed and convex subset of $\X$.  We further define $\mathbb{K}_R=\mathbb{K}\cap\Sigma_R=\Sigma_R$, where $\Sigma_R:=\left\{u\in\X:\|u\|_{\X}\leq R\right\}$.  Then $\mathbb{K}_R$ is not only a closed and convex but also a bounded subset of $\X$.  As $\X$ is reflexive (Lemma \ref{lem2p1}), and $T$ is monotone and hemicontinuous,  we infer from \cite[Lemma 3.3.2]{GDJM} that $T$ is  demicontinuous.  The reflexivity of $\X$  and \cite[Remark 5.21]{VR} imply  that the demicontinuous operator $T: \mathbb{K}_R\to\Y^*$  is continuous on finite dimensional subspaces of $\mathbb{K}_R,$ that is, for any finite dimensional subspace $M\subset\X$, the restriction of $T$ to $\mathbb{K}_R\cap M$ is weakly continuous, namely, $T:\mathbb{K}_R\cap M\to\Y^*$ is weakly continuous. Together with this fact, the monotonicity of the operator $T:\mathbb{K}_R\to\Y^*$ and the Hartman-Stampacchia theorem (see \cite[Theorem 1.4, Chapter III]{DKGS}) yield the existence of a $u_R\in\mathbb{K}_R$ such that 
	\begin{align}\label{2p10}
		\langle T(u_R)-f, w-u_R\rangle\geq 0\ \text{ for all }\ w\in\mathbb{K}_R. 
	\end{align}
	
	The monotonicity as well as coercivity properties  of $T:\mathbb{K}\to\Y^*$   imply (see \eqref{2p5} and \eqref{2p7})
	\begin{align*}
		\frac{\langle(T(u)-f)-(T(u_*)-f),u-u_*\rangle}{\|u-u_*\|_{\Y}}\to\infty \ \text{ as }\ \|u\|_{\Y}\to\infty, \ u\in\mathbb{K},
	\end{align*}
	for some $u_*\in\mathbb{K}_R$. For instance, one can choose $u_*=0\in\mathbb{K}_R$. Choose, $K>\|T(u_*)-f\|_{\Y^*}$ and $R>\|u_*\|_{\Y}$ such that 
	\begin{align*}
		\langle(T(u)-f)-(T(u_*)-f),u-u_*\rangle\geq K\|u-u_*\|_{\Y}\ \text{ for }\ \|u\|_{\Y}\geq R, \ u\in\mathbb{K}. 
	\end{align*}
	Therefore, 
	\begin{align}\label{2a12}
			\langle(T(u)-f),u-u_*\rangle&\geq K\|u-u_*\|_{\Y}+	\langle(T(u_*)-f),u-u_*\rangle\nonumber\\&\geq K\|u-u_*\|_{\Y}-\|T(u_*)-f\|_{\Y^*}\|u-u_*\|_{\Y}\nonumber\\&\geq \left(K-\|T(u_*)-f\|_{\Y^*}\right)\left(\|u\|_{\Y}-\|u_*\|_{\Y}\right)>0\ \text{ for }\ \|u\|_{\Y}=R.
	\end{align}
	Now, let $u_R\in\mathbb{K}_R$ be a solution of \eqref{2p10}. Then 
	\begin{align*}
		\langle(T(u_R)-f),u_R-u_*\rangle=-\langle(T(u_R)-f),u_*-u_R\rangle\leq 0,
	\end{align*}
	and in view of \eqref{2a12}, we have $\|u_R\|_{\Y}\neq R$. Or in other words, $\|u_R\|_{\Y}< R$.  Combining  the above facts and applying \cite[Theorem 1.7, Chapter III]{DKGS}, we deduce 
	% Since $T$ is monotone and hemicontinuous  in the sense of \eqref{2p12}, an application of \cite[Theorem 3.2.5]{GDJM} yields 
	the existence of  a $u_0 \in \mathbb{K}$ such that
	\begin{align}\label{2p23}
		\langle T(u_0)-f,w-u_0\rangle \geq 0\ \text{ for all }\ w\in \mathbb{K}.
	\end{align}
		In particular, for $w=0$, we have $\langle T(u_0)-f,u_0\rangle \leq 0$, which by \eqref{2p22}  implies that $$u_0\in \left\{u\in\X:\|u\|_{\Y}< R\right\}.$$ Consequently taking $w=u_0+\lambda v$ in \eqref{2p23} with $\|v\|_{\Y}=1$ and $0<|\lambda|\leq R-\|u_0\|_{\Y}$, we obtain 
	\begin{align}\label{2p24}
		\langle T(u_0)-f,v\rangle = 0\ \text{ for all }\ v\in \{u\in\X:\|u\|_{\Y}=1\}. 
	\end{align}
	Since the embedding $\X\subset\Y$ is dense, for any $v\in\Y$ with $\|v\|_{\Y}=1$, we can find a sequence  $\{v_n\}_{n\in\N}\subset\X$ with $\|v_n\|_{\Y}=1$ such that  $\|v_n-v\|_{\Y}\to 0$ as $n\to\infty$ (Lemma \ref{lem2p1}). Therefore, from \eqref{2p24}, we infer 
	\begin{align*}
		\lim\limits_{n\to\infty}	\langle T(u_0)-f,v_n\rangle = 0\Rightarrow \langle T(u_0)-f,v\rangle = 0\ \text{ for all }\  v\in\{u\in\Y:\|u\|_{\Y}=1\},
	\end{align*}
	and hence $T(u_0)=f\in\Y^*.$
\end{proof}

\subsection{Existence and uniqueness}
We use Theorem \ref{thm2.2} to show the existence and uniqueness of solutions  for the problem \eqref{1.1}-\eqref{1.2} with the controls \eqref{1p3} and \eqref{1.3} for $\delta=1$ and $\delta=2$, respectively. 
We provide a cut-off function for the function $f(v)=\frac{1}{\delta+1}v^{\delta+1}$ in order to obtain monotonicity of the nonlinear map $\mathscr{A}$ defined in \eqref{2.6} due to the lack of global Lipschitz continuity. 
%Actually, we prove in general that the following findings hold for any $f\in \C^1(\R)$.   
%For $\rho\geq 1$, we define 
%\begin{align*}
%	f_{\rho}(y):=\left\{\begin{array}{ll}
%		f(\rho)+f'(\rho)(y-\rho)&\ \text{ if }\ y>\rho,\\ 
%		f(y)&\ \text{ if }\ |y|\leq\rho,\\
%		f(-\rho)+f'(-\rho)(y+\rho)&\ \text{ if }\ y<-\rho. 
%	\end{array}\right.
%\end{align*}
%Then one can show that $f_{\rho}\in\C^1(\R)$ is globally Lipschitz continuous with the Lipschitz constant, $$L_{\rho}=\sup_{x\in\R}|f_{\rho}'(x)|=\sup_{|x|\leq\rho}|f'(x)|.$$  

Let us define the modified nonlinearity $f_{\rho}:\Y\to\Y^*$ by 
\begin{align}\label{2q12}
	f_{\rho}(y):=\left\{\begin{array}{cc}f(y)&\ \text{ if }\ |y|\leq \rho,\\
	\left(\frac{\rho}{\|y\|_{\L^{\infty}}}\right)^{\delta+1}f(y)&\ \text{ if }\ |y|> \rho,\end{array}\right.
\end{align}
for $\rho\in\mathbb{N}$. Since each $y\in\Y$ is a continuous function over $[0,1]$, $\|y\|_{\L^{\infty}}=\max\limits_{x\in[0,1]}|y(x)|.$

Let us define the  nonlinear map $\mathscr{A}_{\rho}:\mathrm{X}\subset\Y\to\mathrm{Y}^*$ by 
\begin{align}\label{25}
	\langle\mathscr{A}_{\rho}(v),w\rangle &=\nu(v_x,w_x)+\nu g_1(v(1))w(1)-\mu(v_{xx},w_x)+\mu g_2(v(1))w(1)\nonumber\\&\quad+\alpha\left[f_{\rho}(v(1))w(1)-(f_{\rho}(v),w_x)\right]-\beta (v(1-v^{\delta})(v^{\delta}-\gamma),w),
\end{align} 
for all $v\in\mathrm{X}$ and $w\in\mathrm{Y}$. Therefore, for each $v\in\X$ and $w\in\Y$, we have 
\begin{align}\label{26}
	&	|\langle\mathscr{A}_{\rho}(v),w\rangle |\nonumber\\&\leq C \big[\nu(\|v_x\|_{\mathrm{L}^2}+|g_1(v(1))|)+\mu(\|v_{xx}\|_{\L^2}+|g_2(v(1))|)+\alpha(\|f_{\rho}(v)\|_{\L^2}+|f_{\rho}(v(1))|)\nonumber\\&\quad+\beta((1+\gamma)\|v\|_{\L^{2(\delta+1)}}^{\delta+1}+\gamma\|v\|_{\L^2}+\|v\|_{\L^{2(\delta+1)}}^{2\delta+1})\big]\|w\|_{\H^1},
\end{align}
where we have used the continuous Sobolev embedding  $\H^1(0,1)\hookrightarrow\C([0,1])\hookrightarrow\L^p(0,1)$ for all $1\leq p<\infty$. From \eqref{26}, it is immediate that the operator $\|\mathscr{A}_{\rho}(v)\|_{\Y^*}$ is bounded for each $v\in\X$.

\begin{remark}
	Note that if one takes $w=v$ in \eqref{26}, then we have  for all $v\in\X$:
	\begin{align}\label{217}
		\langle\mathscr{A}_{\rho}(v),v\rangle &=\nu\|v_x\|_{\L^2}^2+\nu g_1(v(1))v(1)-\frac{\mu}{2}(g_1(v(1)))^2+\frac{\mu}{2}(v_x(0))^2+\mu g_2(v(1))v(1)\nonumber\\&\quad+\alpha\left[f_{\rho}(v(1))v(1)-(f_{\rho}(v),v_x)\right]-\beta(1+\gamma)(v^{\delta+1},v)+\beta\gamma\|v\|_{\L^2}^2+\beta\|v\|_{\L^{2(\delta+1)}}^{2(\delta+1)}.
	\end{align}
		For $\delta=1$ and $|v|\leq\rho$, using the control given in \eqref{1.3}, we find
	\begin{align}\label{2k12}
		\langle\mathscr{A}_{\rho}(v),v\rangle &\geq \nu\|v_x\|_{\L^2}^2+\eta v^2(1)+\frac{\alpha^2}{\eta(\delta+2)^2}v^{2(\delta+1)}(1)+\frac{\mu}{2}(g_1(v(1)))^2+\frac{\alpha}{\delta+2}v^{\delta+2}(1)\nonumber\\&\quad-\frac{\beta}{4}(1-\gamma)^2\|v\|_{\L^2}^2\nonumber\\&\geq \nu\|v_x\|_{\L^2}^2+\frac{\eta}{2} v^2(1)+\frac{\alpha^2}{2\eta(\delta+2)^2}v^{2(\delta+1)}(1)+\frac{\mu}{2}(g_1(v(1)))^2-\frac{\beta}{4}(1-\gamma)^2\|v\|_{\L^2}^2\nonumber\\&\geq\left(\nu-\frac{\beta}{4}(1-\gamma)^2\right)\|v\|_{\L^2}^2\geq  0,
	\end{align}
	provided  $\nu\geq\frac{\beta}{4}(1-\gamma)^2$. 
	For $\delta=2$ and $|v|\leq\rho$, using the control given in \eqref{1p3}, we deduce 
	\begin{align}\label{215}
		\langle\mathscr{A}_{\rho}(v),v\rangle &\geq\nu\|v_x\|_{\L^2}^2+\eta v^2(1)+\frac{2\alpha}{(\delta+2)}v^{\delta+2}(1)+\frac{\mu}{2}(g_1(v(1)))^2-\frac{\beta}{4}(1-\gamma)^2\|v\|_{\L^2}^2\nonumber\\&\geq\left(\nu-\frac{\beta}{4}(1-\gamma)^2\right)\|v\|_{\L^2}^2\geq  0,
		\end{align}
		provided  $\nu\geq\frac{\beta}{4}(1-\gamma)^2$.
As $1>\frac{\rho}{\|v\|_{\L^{\infty}}}$, the same results hold true for $|v|>\rho$ also. 
	%For $|v|\leq\rho$, we find by an application of Poincar\'e's inequality that 
	%\begin{align}\label{29}
	%	\langle\mathscr{A}(v),v\rangle =	\langle\mathscr{A}_{\rho}(v),v\rangle &\geq \nu\|v_x\|_{\L^2}^2-\beta(1-\gamma)^2\|v\|_{\L^2}^2+\eta v^2(1)+\frac{2\alpha}{\delta+2}v^{\delta+2}(1)\nonumber\\&\quad+\frac{\mu}{2\nu^2}\left(\eta+\frac{\alpha}{\delta+2}v^{\delta}(1)\right)^2v^2(1)\nonumber\\&\geq(\nu-\beta(1-\gamma)^2)\|v\|_{\L^2}^2\geq  0,
	%\end{align}
	%if $\nu\geq\beta(1-\gamma)^2$, $\delta$ is even or $v(1)$ is positive. If $\delta$ is odd and $v(1)$ is negative, then one can consider the feedback control given in \eqref{} to obtain the required result. 
\end{remark}

The following result shows that the operator $\mathscr{B}_{\rho,\omega}:=\mathscr{A}_{\rho}+\omega\I:\X\subset\Y\to\Y^*$ is monotone, hemicontinuous and coercive for sufficiently large $\omega$ (in the sense of Theorem \ref{thm2.2}). 

\begin{lemma}\label{lem2.1}
	Let $g_1,g_2\in\C(\R)$ be non-decreasing functions, $f\in\C^1(\R)$, $\nu, \mu, \alpha,\beta,\rho>0$, $\delta\in\{1,2\}$, $\gamma\in(0,1)$ be constants, and $\mathscr{A}_{\rho}$ be defined by \eqref{25}. Then for all $\omega\geq\omega_{\rho}$, with $\omega_{\rho}$ defined by 
	\begin{align}\label{2.12}
		\omega_{\rho}=\left(\frac{\alpha L_{\rho}}{\nu}+2^{2\delta-1}\beta(1+\gamma)^2(\delta+1)^2\right),
	\end{align}
	the nonlinear map $\mathscr{B}_{\rho,\omega}:=\mathscr{A}_{\rho}+\omega\I:\X\to\Y^*$ is \emph{monotone} in the sense that 
	\begin{align}
		\langle \mathscr{B}_{\rho,\omega}(v)-\mathscr{B}_{\rho,\omega}(w),v-w\rangle \geq 0\ \text{ for all }\ v,w\in \X. 
	\end{align}
	Moreover, $\mathscr{B}_{\rho,\omega}$ is \emph{hemicontinuous} in the sense that for all $v_1,v_2\in\X$ and $w\in\Y$, \begin{align}\label{2p12}\lim\limits_{\lambda\to 0}\langle\mathscr{B}_{\rho,\omega}(v_1+\lambda v_2),w\rangle=\langle\mathscr{B}_{\rho,\omega}(v_1),w\rangle.\end{align} Finally, $\mathscr{B}_{\rho,\omega}$ is \emph{coercive} in the sense that 
	\begin{align}\label{2p13}\lim\limits_{\|v\|_{\Y\to\infty}}\frac{\langle\mathscr{B}_{\rho,\omega}(v),v\rangle}{\|v\|_{\Y}}=\infty,\end{align}
	for all $v\in\X$. 
\end{lemma}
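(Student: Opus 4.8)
The plan is to treat the three assertions separately after the reduction $\mathscr{B}_{\rho,\omega}=\mathscr{A}_{\rho}+\omega\I$: the bounded linear shift $\omega\I$ contributes $\omega\|v-w\|_{\L^2}^2$ to the monotonicity pairing, is affine and hence harmless for hemicontinuity, and adds the non-negative quantity $\omega\|v\|_{\L^2}^2$ in the coercivity pairing. Thus it suffices to analyze $\mathscr{A}_{\rho}$ and to exhibit the threshold $\omega_{\rho}$ of \eqref{2.12} for which the negative contributions in the monotonicity estimate are absorbed.

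First I would establish monotonicity. Writing $z:=v-w$ and expanding $\langle\mathscr{A}_{\rho}(v)-\mathscr{A}_{\rho}(w),z\rangle$ from \eqref{25} term by term, the diffusion term gives $\nu\|z_x\|_{\L^2}^2\geq 0$ and the first feedback term gives $\nu(g_1(v(1))-g_1(w(1)))z(1)\geq 0$ since $g_1$ is non-decreasing. The third-order contribution together with the $g_2$-feedback term, namely $-\mu(z_{xx},z_x)+\mu(g_2(v(1))-g_2(w(1)))z(1)$, is exactly $\mu((v-w)_{xxx},v-w)$ after the integration by parts of \eqref{2.6}; using $z(0)=0$ and $z_x(1)=-(g_1(v(1))-g_1(w(1)))$ for $v,w\in\X$, it equals $\mu$ times the quantity computed in \eqref{1.5} plus $\tfrac{\mu}{2}(z_x(0))^2$, hence is non-negative by the algebraic form of the feedback laws \eqref{1.3}--\eqref{1p3}. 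The only terms with the wrong sign are the convective and reaction pairings. For the convective pairing $\alpha[(f_{\rho}(v(1))-f_{\rho}(w(1)))z(1)-(f_{\rho}(v)-f_{\rho}(w),z_x)]$ I would invoke the global Lipschitz continuity of $f_{\rho}$ with constant $L_{\rho}$ (guaranteed by the cutoff \eqref{2q12}), the trace bound $z(1)^2\leq 2\|z\|_{\L^2}\|z_x\|_{\L^2}$ valid because $z(0)=0$, and Young's inequality to dominate it by $\tfrac{\nu}{2}\|z_x\|_{\L^2}^2+C\|z\|_{\L^2}^2$. For the reaction pairing, note that it equals $\beta(\psi(v)-\psi(w),z)$ with the scalar map $\psi(t)=t^{2\delta+1}-(1+\gamma)t^{\delta+1}+\gamma t$; since $\psi'(t)$ is a quadratic in $t^{\delta}$ with positive leading coefficient it is bounded below, so $\psi$ is one-sided Lipschitz and the pairing is at least $-\beta M\|z\|_{\L^2}^2$ with $M$ of the order entering \eqref{2.12}. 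Collecting the two negative coefficients of $\|z\|_{\L^2}^2$ reproduces $\omega_{\rho}$, and for $\omega\geq\omega_{\rho}$ the term $\omega\|z\|_{\L^2}^2$ absorbs them, giving monotonicity.

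For hemicontinuity it is enough to check that $\lambda\mapsto\langle\mathscr{A}_{\rho}(v_1+\lambda v_2),w\rangle$ is continuous at $\lambda=0$. Since $\X\hookrightarrow\C^1([0,1])$, we have $v_1+\lambda v_2\to v_1$ together with its derivative uniformly as $\lambda\to 0$; the diffusion and dispersion pairings are affine in $\lambda$, while the feedback, convective and reaction terms are continuous because $g_1,g_2,f_{\rho}$ and the reaction polynomial are continuous and the trace $v\mapsto v(1)$ is continuous, the $\L^2$-pairings converging by dominated convergence (with $f_{\rho}$ bounded). This yields \eqref{2p12}. For coercivity I would start from the identity \eqref{217} and use that the cutoff forces $\|f_{\rho}(v)\|_{\L^{\infty}}\leq \rho^{\delta+1}/(\delta+1)$ uniformly in $v$, so the convective contributions are at most linear in $\|v_x\|_{\L^2}$; the bad reaction term $-\beta(1+\gamma)(v^{\delta+1},v)$ is dominated for large arguments by the non-negative $\beta\|v\|_{\L^{2(\delta+1)}}^{2(\delta+1)}$ via Young's inequality, the boundary terms are non-negative, and $\omega\|v\|_{\L^2}^2\geq 0$. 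Hence $\langle\mathscr{B}_{\rho,\omega}(v),v\rangle\geq \nu\|v_x\|_{\L^2}^2-C_{\rho}\|v_x\|_{\L^2}-C_{\rho}'$, and dividing by $\|v\|_{\Y}$ (equivalent to $\|v_x\|_{\L^2}$ by the Poincar\'e inequality, cf. Lemma \ref{lem2p1}) and letting $\|v\|_{\Y}\to\infty$ gives \eqref{2p13}.

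I expect the main obstacle to be the joint treatment of the third-order term and the nonlinear convective flux at the right endpoint: it is only the precise algebraic structure of the feedback laws \eqref{1.3}--\eqref{1p3}, encoded in the non-negativity \eqref{1.5}, that makes the dispersive boundary contribution sign-definite, and it is exactly the loss of global Lipschitz continuity of the convective nonlinearity that forces both the cutoff $f_{\rho}$ and the shift by $\omega$ in order to close the monotonicity estimate.
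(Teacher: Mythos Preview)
Your plan mirrors the paper's three-step proof, and the monotonicity argument is organized the same way: the dispersive boundary contribution is handled by \eqref{1.5}, the convective defect by a Lipschitz-type bound on $f_\rho$ together with Young's inequality, and the reaction defect is absorbed into the shift $\omega$. One point needs more care than you indicate: the cutoff $f_\rho$ in \eqref{2q12} is \emph{not} a pointwise truncation but a nonlocal one depending on $\|y\|_{\L^\infty}$, so ``global Lipschitz continuity with constant $L_\rho$'' is not immediate---the paper devotes the bulk of Step~1 to a three-case analysis (according to whether $\|u\|_{\L^\infty},\|v\|_{\L^\infty}$ are $\leq\rho$ or $>\rho$) before arriving at the working estimate \eqref{219}. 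Your one-sided Lipschitz treatment of the reaction term via the lower bound on $\psi'$ is slicker than the paper's separate estimates \eqref{2.9}--\eqref{2.10}, though it produces a different threshold than the specific $\omega_\rho$ of \eqref{2.12}; note also that for $\delta=1$ the expression is a genuine quadratic in $t$, not in $t^\delta$, so your phrasing ``quadratic in $t^\delta$'' should be adjusted. For hemicontinuity the paper instead proves demicontinuity ($v_n\to v$ in $\X$ forces $\mathscr{A}_\rho(v_n)\rightharpoonup\mathscr{A}_\rho(v)$ in $\Y^*$) and invokes the standard implication; your direct verification along rays is equally valid and shorter. For coercivity the paper takes the shortcut of setting $w=0$ in the already-established monotonicity bound \eqref{2.11}, giving $\langle\mathscr{B}_{\rho,\omega}(v),v\rangle\geq\tfrac12\max\{\nu/2,\beta\gamma\}\|v\|_{\H^1}^2-\|\mathscr{A}_\rho(0)\|_{\Y^*}\|v\|_{\Y}$, which is quicker than re-estimating \eqref{217} from scratch as you propose.
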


\begin{proof}
	The proof is divided into the following steps:
	\vskip 0.1cm
	\noindent \textbf{Step 1:} \emph{Monotonicity.}
	Let us first prove the monotoniciy property. For all $v,w\in\X$, since $f_{\rho}$ satisfies a  monotone condition (see \eqref{219} below), $g_1$  is non-decreasing and \eqref{1.5} holds, we have
	\begin{align}\label{28}
		&\langle\mathscr{A}_{\rho}(v)-\mathscr{A}_{\rho}(w),v-w\rangle\nonumber\\&=\nu\|v_x-w_x\|_{\L^2}^2-\frac{\mu}{2}(g_1(v(1))-g_1(w(1)))^2+\frac{\mu}{2}(v_x(0)-w_x(0))^2-\alpha(f_{\rho}(v)-f_{\rho}(w),v_x-w_x)\nonumber\\&\quad-\beta(1+\gamma)(v^{\delta+1}-w^{\delta+1},v-w)+\beta\gamma\|v-w\|_{\L^2}^2+\beta(v^{2\delta+1}-w^{2\delta+1},v-w)\nonumber\\&\quad+\nu (g_1(v(1))-g_1(w(1)))(v(1)-w(1))+\mu (g_2(v(1))-g_2(w(1)))(v(1)-w(1))\nonumber\\&\quad +\alpha(f_{\rho}(v(1))-f_{\rho}(w(1)))(v(1)-w(1))\nonumber\\&\geq \frac{\nu}{2}\|v_x-w_x\|_{\L^2}^2-\frac{\alpha L_{\rho}}{\nu}\|v-w\|_{\L^2}^2+\beta\gamma\|v-w\|_{\L^2}^2\nonumber\\&\quad-\beta(1+\gamma)(v^{\delta+1}-w^{\delta+1},v-w)+\beta(v^{2\delta+1}-w^{2\delta+1},v-w). 
	\end{align}
	
The estimate of the convective term can be justified in the following way:	 Without loss of generality, we may assume that $|u|\leq|v|$ for all $u,v\in\Y$. Then for $|u|,|v|\leq\rho,$ we have 
	\begin{align}
		&	|\langle f_{\rho,x}(u)- f_{\rho,x}(v),u-v\rangle|\nonumber\\&=|(f_{\rho}(u(1))-f_{\rho}(v(1)))(u(1)-v(1))-\langle f_{\rho}(u)-f_{\rho}(v),u_x-v_x\rangle|\nonumber\\&=\left|\frac{1}{\delta+1}\left(u^{\delta+1}(1)-v^{\delta+1}(1)\right)(u(1)-v(1))-\frac{1}{\delta+1}\langle u^{\delta+1}-v^{\delta+1},u_x-v_x\rangle\right|\nonumber\\&\leq\frac{1}{\delta+1}\|u^{\delta+1}-v^{\delta+1}\|_{\L^{\infty}}\|u-v\|_{\L^{\infty}}+\|(\theta u+(1-\theta)v)^{\delta}(u-v)\|_{\L^2}\|u_x-v_x\|_{\L^2}\nonumber\\&\leq \|(\theta u+(1-\theta)v)^{\delta}\|_{\L^{\infty}}\left[\|u-v\|_{\L^{\infty}}^2+\|u-v\|_{\L^2}\|u_x-v_x\|_{\L^2}\right]\nonumber\\&\leq 2^{\delta+1}\left(\|u\|_{\L^{\infty}}^{\delta}+\|v\|_{\L^{\infty}}^{\delta}\right)\|u-v\|_{\L^2}\|u_x-v_x\|_{\L^2}\nonumber\\&\leq\frac{\nu}{2}\|u_x-v_x\|_{\L^2}^2+\frac{2^{4\delta+3}\rho^{2\delta}}{\nu}\|u-v\|_{\L^2}^2,
	\end{align}
	where we have used H\"older's, Agmon's and Young's inequalities.    For $|u|,|v|>\rho,$ we have 
	\begin{align}
		&	|\langle f_{\rho,x}(u)- f_{\rho,x}(v),u-v\rangle|\nonumber\\&=\left|\left[\left(\frac{\rho}{|u(1)|}\right)^{\delta+1}f(u(1))-\left(\frac{\rho}{|v(1)|}\right)^{\delta+1}f(v(1))\right](u(1)-v(1))\right.\nonumber\\&\quad\left.-\left< \left(\frac{\rho}{\|u\|_{\L^{\infty}}}\right)^{\delta+1}f(u)-\left(\frac{\rho}{\|v\|_{\L^{\infty}}}\right)^{\delta+1}f(v),u_x-v_x\right>\right|\nonumber\\&=\Bigg|\left(\frac{\rho}{|u(1)|}\right)^{\delta+1}\frac{1}{\delta+1}(u^{\delta+1}(1)-v^{\delta+1}(1))(u(1)-v(1))\nonumber\\&\quad+\left[\left(\frac{\rho}{|u(1)|}\right)^{\delta+1}-\left(\frac{\rho}{|v(1)|}\right)^{\delta+1}\right]\frac{1}{\delta+1}v^{\delta+1}(1)\nonumber\\&\quad-\left(\frac{\rho}{\|u\|_{\L^{\infty}}}\right)^{\delta+1}\frac{1}{\delta+1}\langle u^{\delta+1}-v^{\delta+1},v_x-u_x\rangle\nonumber\\&\quad-\left[\left(\frac{\rho}{\|u\|_{\L^{\infty}}}\right)^{\delta+1}-\left(\frac{\rho}{\|v\|_{\L^{\infty}}}\right)^{\delta+1}\right]\frac{1}{\delta+1}\langle v^{\delta+1},u_x-v_x\rangle \Bigg|\nonumber\\&=\Bigg|\left(\frac{\rho}{|u(1)|}\right)^{\delta+1}(\theta u(1)+(1-\theta)v(1))^{\delta}(u(1)-v(1))^2\nonumber\\&\quad+\frac{\rho^{\delta+1}}{|u(1)|^{\delta+1}|v(1)|^{\delta+1}}(\theta|v(1)|+(1-\theta)|u(1)|)^{\delta}\left(|v(1)|-|u(1)|\right)(u(1)-v(1))\nonumber\\&\quad-\left(\frac{\rho}{\|u\|_{\L^{\infty}}}\right)^{\delta+1}\langle(\theta u+(1-\theta v))^{\delta}(u-v),u_x-v_x\rangle\nonumber\\&\quad-\frac{\rho^{\delta+1}}{\|u\|_{\L^{\infty}}^{\delta+1}\|v\|_{\L^{\infty}}^{\delta+1}}\left(\theta\|v\|_{\L^{\infty}}+(1-\theta)\|u\|_{\L^{\infty}}\right)^{\delta}\left(\|v\|_{\L^{\infty}}-\|u\|_{\L^{\infty}}\right)\langle v^{\delta+1},u_x-v_x\rangle\Bigg|\nonumber\\&\leq 2^{2\delta}\rho^{\delta}\left[\|u-v\|_{\L^{\infty}}^2+\|u-v\|_{\L^2}\|u_x-v_x\|_{\L^2}+\|u-v\|_{\L^{\infty}}\|u_x-v_x\|_{\L^2}\right]\nonumber\\&\leq \frac{\nu}{2}\|u_x-v_x\|_{\L^2}^2+\frac{L_{\rho}}{\nu}\|u-v\|_{\L^2}^2,
	\end{align}
	where we have used H\"older's, Agmon's, Sobolev's and Young's inequalities. 
	Finally, for $|u|\leq\rho$ and $|v|>\rho$, we deduce 
	\begin{align}
		&	|\langle f_{\rho,x}(u)- f_{\rho,x}(v),u-v\rangle|\nonumber\\&=\left|\left[f(u(1))-\left(\frac{\rho}{|v(1)|}\right)^{\delta+1}f(v(1))\right](u(1)-v(1))-\left< f(u)-\left(\frac{\rho}{\|v\|_{\L^{\infty}}}\right)^{\delta+1}f(v),u_x-v_x\right>\right|\nonumber\\&=\Bigg|\left[1-\left(\frac{\rho}{|v(1)|}\right)^{\delta+1}\right]f(u(1))(u(1)-v(1))+\left(\frac{\rho}{|v(1)|}\right)^{\delta+1}(f(u(1))-f(v(1)))(u(1)-v(1))\nonumber\\&\quad-\left[1-\left(\frac{\rho}{\|v\|_{\L^{\infty}}}\right)^{\delta+1}\right]\langle f(u),u_x-v_x\rangle-\left(\frac{\rho}{\|v\|_{\L^{\infty}}}\right)^{\delta+1}\left<(f(u)-f(v)),u_x-v_x\right>\Bigg|\nonumber\\&\leq \left[\left(\frac{\rho}{|u(1)|}\right)^{\delta+1}-\left(\frac{\rho}{|v(1)|}\right)^{\delta+1}\right]|f(u(1))||u(1)-v(1)|\nonumber\\&\quad+\left(\frac{\rho}{|v(1)|}\right)^{\delta+1}|(f(u(1))-f(v(1)))||u(1)-v(1)|\nonumber\\&\quad+\left[\left(\frac{\rho}{\|u\|_{\L^{\infty}}}\right)^{\delta+1}-\left(\frac{\rho}{\|v\|_{\L^{\infty}}}\right)^{\delta+1}\right]\|f(u)\|_{\L^2}\|u_x-v_x\|_{\L^2}\nonumber\\&\quad+\left(\frac{\rho}{\|v\|_{\L^{\infty}}}\right)^{\delta+1} \|f(u)-f(v)\|_{\L^2}\|u_x-v_x\|_{\L^2}\nonumber\\&\leq \left(\frac{\rho}{|v(1)|}\right)^{\delta+1}(\theta|v(1)|+(1-\theta)|u(1)|)^{\delta}(|v(1)|-|u(1)|)|u(1)-v(1)|\nonumber\\&\quad+\left(\frac{\rho}{|v(1)|}\right)^{\delta+1}|\theta u(1)+(1-\theta)v(1)|^{\delta}|u(1)-v(1)|^2\nonumber\\&\quad+\left(\frac{\rho}{\|v\|_{\L^{\infty}}}\right)^{\delta+1}(\theta\|v\|_{\L^{\infty}}+(1-\theta)\|u\|_{\L^{\infty}})^{\delta}(\|v\|_{\L^{\infty}}-\|u\|_{\L^{\infty}})\|u_x-v_x\|_{\L^2}\nonumber\\&\quad+\left(\frac{\rho}{\|v\|_{\L^{\infty}}}\right)^{\delta+1}\|\theta u+(1-\theta)v\|_{\L^{\infty}}^{\delta}\|u-v\|_{\L^{2}}\|u_x-v_x\|_{\L^2}\nonumber\\&\leq 2^{2\delta}\rho^{\delta}\left[\|u-v\|_{\L^{\infty}}^2+\|u-v\|_{\L^{\infty}}\|u_x-v_x\|_{\L^2}+\|u-v\|_{\L^{2}}\|u_x-v_x\|_{\L^2}\right]
		\nonumber\\&\leq \frac{\nu}{2}\|u_x-v_x\|_{\L^2}^2+\frac{L_{\rho}}{\nu}\|u-v\|_{\L^2}^2.
	\end{align}
	
	From the above calculations, one can conclude that 
	\begin{align}\label{219}
		&	|\langle f_{\rho,x}(u)- f_{\rho,x}(v),u-v\rangle|\leq \frac{\nu}{2}\|u_x-v_x\|_{\L^2}^2+\frac{L_{\rho}}{\nu}\|u-v\|_{\L^2}^2,
	\end{align}
	for all $u,v\in\Y$. 
	
Let us now estimate the term $-\beta(v^{2\delta+1}-w^{2\delta+1},v-w)$ from \eqref{28} as
	\begin{align}\label{2.9}
		&\beta(v^{2\delta+1}-w^{2\delta+1},v-w)\nonumber\\&= \beta(v^{2\delta},(v-w)^2) +\beta(w^{2\delta},(v-w)^2)+\beta(v^{2\delta}w-w^{2\delta}v,v-w)\nonumber\\&=\beta\|v^{\delta}(v-w)\|_{\L^2}^2+\beta\|w^{\delta}(v-w)\|_{\L^2}^2+\beta(vw,v^{2\delta}+w^{2\delta})-\beta(v^2,w^{2\delta})-\beta(w^2,v^{2\delta})\nonumber\\&=\frac{\beta}{2}\|v^{\delta}(v-w)\|_{\L^2}^2+\frac{\beta}{2}\|w^{\delta}(v-w)\|_{\L^2}^2+\frac{\beta}{2}(v^{2\delta}-w^{2\delta},v^2-w^2)\nonumber\\&\geq \frac{\beta}{2}\|v^{\delta}(v-w)\|_{\L^2}^2+\frac{\beta}{2}\|w^{\delta}(v-w)\|_{\L^2}^2,
	\end{align}
	since $(v^{2\delta}-w^{2\delta},v^2-w^2)\geq 0$. Using Taylor's formula, H\"older's and Young's inequalities, we estimate the term $\beta(1+\gamma)(v^{\delta+1}-w^{\delta+1},v-w)$ from \eqref{28} as 
	\begin{align}\label{2.10}
		&\beta(1+\gamma)(v^{\delta+1}-w^{\delta+1},v-w)\nonumber\\&=\beta(1+\gamma)(\delta+1)((\theta v+(1-\theta)w)^{\delta}(v-w),v-w)\nonumber\\&\leq \beta(1+\gamma)(\delta+1)2^{\delta-1}(\|v^{\delta}(v-w)\|_{\L^2}+\|w^{\delta}(v-w)\|_{\L^2})\|v-w\|_{\L^2}\nonumber\\&\leq\frac{\beta}{4}\|v^{\delta}(v-w)\|_{\L^2}^2+\frac{\beta}{4}\|w^{\delta}(v-w)\|_{\L^2}^2+\frac{\beta}{2}2^{2\delta}(1+\gamma)^2(\delta+1)^2\|v-w\|_{\L^2}^2.
	\end{align}
	It should also be noted that  for all $v,w\in\L^{2(\delta+1)}(0,1)$, 
	\begin{align}\label{227}
		\|v-w\|_{\L^{2(\delta+1)}}^{2(\delta+1)}&=\int_0^1|v(x)-w(x)|^{2\delta}|v(x)-w(x)|^2dx\nonumber\\&\leq 2^{2\delta-1}\left[\|v^{\delta}(v-w)\|_{\L^2}^2+\|w^{\delta}(v-w)\|_{\L^2}^2\right].
	\end{align}
	Combining \eqref{2.9}-\eqref{227} and substituting it in \eqref{28},  we obtain 
	\begin{align}\label{2.11}
		&\langle\mathscr{A}_{\rho}(v)-\mathscr{A}_{\rho}(w),v-w\rangle\nonumber\\&\geq \frac{\nu}{2}\|v_x-w_x\|_{\L^2}^2-\frac{\alpha L_{\rho}}{\nu}\|v-w\|_{\L^2}^2+\beta\gamma\|v-w\|_{\L^2}^2\nonumber\\&\quad+\frac{\beta}{4}\|v^{\delta}(v-w)\|_{\L^2}^2+\frac{\beta}{4}\|w^{\delta}(v-w)\|_{\L^2}^2-2^{2\delta-1}\beta(1+\gamma)^2(\delta+1)^2\|v-w\|_{\L^2}^2\nonumber\\&\geq \max\left\{\frac{\nu}{2},\beta\gamma\right\}\|v-w\|_{\H^1}^2+\frac{\beta}{2^{2\delta+1}}\|v-w\|_{\L^{2(\delta+1)}}^{2(\delta+1)}-\omega_{\rho}\|v-w\|_{\L^2}^2,
	\end{align}
	where $\omega_{\rho}$ is defined in \eqref{2.12}. Therefore for all $\omega\geq\omega_{\rho}$, we deduce from \eqref{2.11} that 
	\begin{align}\label{2.13}
		\langle\mathscr{A}_{\rho}(v)-\mathscr{A}_{\rho}(w),v-w\rangle+\omega\|v-w\|_{\L^2}^2\geq \max\left\{\frac{\nu}{2},\beta\gamma\right\}\|u-v\|_{\H^1}^2,
	\end{align}
	for all $v,w\in\X$, so that the monotonicity of the operator $\mathscr{B}_{\rho,\omega}=\mathscr{A}_{\rho}+\omega\I$ follows.

	\vskip 0.1cm
	\noindent \textbf{Step 2:} \emph{Hemicontinuity.} Note that for  reflexive Banach spaces, the demicontinuity property  implies  hemicontinuity \cite[Section 3.3]{GDJM}. As $\X$ is a reflexive Banach space (Lemma \ref{lem2p1}), it is enough to show that $\mathscr{B}_{\rho,\omega}$ is demicontinuous  in the sense that $v_n\to v$ in $\X$ implies $\mathscr{B}_{\rho,\omega}(v_n)\xrightharpoonup{w}\mathscr{B}_{\rho,\omega}(v)$ in $\Y^*$ as $n\to\infty$. 
	\vskip 0.05 cm
	\noindent
	\textbf{Claim:} \emph{The operator $\mathscr{B}_{\rho,\omega}$ is demicontinuous.}
	 In order to show  the demicontinuity of the operator $\mathscr{B}_{\rho,\omega}$, it is enough to prove the demicontinuity of the operator $\mathscr{A}_{\rho}(\cdot)$ defined in \eqref{25}, that is, we need to show that if $v_n\to v$ in $\X$ implies $\mathscr{A}_{\rho}(v_n)\xrightharpoonup{w}\mathscr{A}_{\rho}(v)$ in $\Y^*$ as $n\to\infty$. We first consider the case $|v_n|,|v|\leq\rho$.   We choose a sequence $\{v_n\}_{n\in\N}\in\X$ such that $v_n\to v$ in $\X$ and $|v_n|,|v|\leq\rho$. Since $\H^1(0,1)\hookrightarrow\C([0,1])$, the convergence $v_n\to v$ in $\X$ implies $v_n(x)\to v(x)$ for all $x\in[0,1]$.   For any $w\in \Y$, we consider 
	\begin{align}\label{212}
	&	\left|	\langle \mathscr{A}(v_n)-\mathscr{A}(v),w\rangle \right|\nonumber\\&\leq\nu\big| \big((v_{n,x}-v_x),w_x\big)\big|+\nu |(g_1(v_n(1))-g_1(v(1)))w(1)|\nonumber\\&\quad+\mu|((v_{n,xx}-v_{xx}),w_x)|+\mu |(g_2(v_n(1))-g_2(v(1)))w(1)|\nonumber\\&\quad+\alpha|(f(v_n)-f(v),w_x)|+\alpha|(f(v_n(1))-f(v(1)))w(1)|\nonumber\\&\quad+\beta(1+\gamma)|(v_n^{\delta+1}-v^{\delta+1},w)|+\beta|(v_n^{2\delta+1}-v^{2\delta+1},w)|+\beta\gamma|(v_n-v,w)|\nonumber\\&\leq \nu\|v_{n,x}-v_x\|_{\L^2}\|w_x\|_{\L^2}+\eta|v_n(1)-v(1)||w(1)|\nonumber\\&\quad+\frac{\alpha(\delta+1)}{(\delta+2)}|\theta v_n(1)+(1-\theta)v(1)|^{\delta}|v_n(1)-v(1)||w(1)|+\mu\|v_{n,xx}-v_{xx}\|_{\L^2}\|w_x\|_{\L^2}\nonumber\\&\quad+\frac{\delta\mu}{\nu^2}\bigg[\eta^2|v_n(1)-v(1)|+\frac{2\alpha\eta(\delta+1)}{(\delta+2)}\theta v_n(1)+(1-\theta)v(1)|^{\delta}|v_n(1)-v(1)|\nonumber\\&\quad+\frac{\alpha^2(2\delta+1)}{(\delta+2)^2}|\theta v_n(1)+(1-\theta)v(1)|^{2\delta}|v_n(1)-v(1)|\bigg]|w(1)|\nonumber\\&\quad+\alpha\|\theta v_n+(1-\theta)v\|_{\L^{\infty}}^{\delta}\|v_n-v\|_{\L^2}\|w_x\|_{\L^2}+\alpha|\theta v_n(1)+(1-\theta)v(1)|^{\delta}|v_n(1)-v(1)||w(1)|\nonumber\\&\quad+\beta\Big[(1+\gamma)(\delta+1)\|\theta v_n+(1-\theta)v\|_{\L^{\infty}}^{\delta}+(2\delta+1)\|\theta v_n+(1-\theta)v\|_{\L^{\infty}}^{2\delta}+1\Big]\|v_n-v\|_{\L^2}\|w\|_{\L^2}
	\nonumber\\&\to 0\ \text{ as }\ n\to\infty,
	\end{align}
	for $\delta=2$. Similarly, one can show for the case $\delta=1$ also. For the other cases, the proof follows in a similar way except for the convective term with cutoff function. For $|v_n|,|v|>\rho$, we have 
	\begin{align}
	&	|(f_{\rho}(v_n)-f_{\rho}(v),w_x)|+|(f_{\rho}(v_n(1))-f_{\rho}(v(1)))w(1)|\nonumber\\&=\left|\left(\left(\frac{\rho}{\|v_n\|_{\L^{\infty}}}\right)^{\delta+1}f(v_n)-\left(\frac{\rho}{\|v\|_{\L^{\infty}}}\right)^{\delta+1}f(v),w_x\right)\right|\nonumber\\&\quad+\left|\left[\left(\frac{\rho}{|v_n(1)|}\right)^{\delta+1}f(v_n(1))-\left(\frac{\rho}{|v(1)|}\right)^{\delta+1}f(v(1))\right]w(1)\right|
	\nonumber\\&=\left|\left(\left[\left(\frac{\rho}{\|v_n\|_{\L^{\infty}}}\right)^{\delta+1}-\left(\frac{\rho}{\|v\|_{\L^{\infty}}}\right)^{\delta+1}\right]f(v_n)+\left(\frac{\rho}{\|v\|_{\L^{\infty}}}\right)^{\delta+1}(f(v_n)-f(v)),w_x\right)\right|\nonumber\\&\quad+\left|\left\{\left[\left(\frac{\rho}{|v_n(1)|}\right)^{\delta+1}-\left(\frac{\rho}{|v(1)|}\right)^{\delta+1}\right]f(v_n(1))+\left(\frac{\rho}{|v(1)|}\right)^{\delta+1}(f(v_n(1))-f(v(1)))\right\}w(1)\right|\nonumber\\&\leq \left(\frac{\rho}{\|v\|_{\L^{\infty}}}\right)^{\delta+1}\Big[ (\theta\|v\|_{\L^{\infty}}+(1-\theta)\|v_n\|_{\L^{\infty}})^{\delta}\|v_n-v\|_{\L^{\infty}}\nonumber\\&\quad+\|(\theta v_n+(1-\theta)v)^{\delta}\|_{\L^{\infty}}\|v_n-v\|_{\L^2}\Big]\|w_x\|_{\L^2}\nonumber\\&\quad+2\left(\frac{\rho}{|v(1)|}\right)^{\delta+1}\Big[(\theta|v_n(1)|+(1-\theta)|v(1))^{\delta}|v_n(1)-v(1)|\Big]|w(1)|\nonumber\\&\leq 2^{\delta+1}(\|v_n\|_{\L^{\infty}}^{\delta}+\|v\|_{\L^{\infty}}^{\delta})\|v_n-v\|_{\L^{\infty}}\|w\|_{\Y}\nonumber\\&\to 0\ \text{ as }\ n\to\infty. 
	\end{align}
	For the case $|v_n|\leq \rho$ and $|v|>\rho$, we infer 
	\begin{align}
		&	|(f_{\rho}(v_n)-f_{\rho}(v),w_x)|+|(f_{\rho}(v_n(1))-f_{\rho}(v(1)))w(1)|\nonumber\\&=\left|\left(f(v_n)-\left(\frac{\rho}{\|v\|_{\L^{\infty}}}\right)^{\delta+1}f(v),w_x\right)\right|+\left|\left[f(v_n(1))-\left(\frac{\rho}{|v(1)|}\right)^{\delta+1}f(v(1))\right]w(1)\right|\nonumber\\&= \left|\left(\frac{\|v\|_{\L^{\infty}}^{\delta+1}-\rho^{\delta+1}}{\|v\|_{\L^{\infty}}^{\delta+1}}\right)(f(v_n),w_x)+\left(\frac{\rho}{\|v\|_{\L^{\infty}}}\right)^{\delta+1}(f(v_n)-f(v),w_x)\right|\nonumber\\&\quad+\left|\left(\frac{|v(1)|^{\delta+1}-\rho^{\delta+1}}{|v(1)|^{\delta+1}}\right)f(v_n(1))w(1)+\left(\frac{\rho}{|v(1)|}\right)^{\delta+1}(f(v_n(1))-f(v(1)))w(1)\right|\nonumber\\&\leq\left[\left(\theta\|v\|_{\L^{\infty}}+(1-\theta)\|v_n\|_{\L^{\infty}}\right)^{\delta}\|v_n-v\|_{\L^{\infty}}+\|\theta v_n+(1-\theta)v\|_{\L^{\infty}}^{\delta}\|v_n-v\|_{\L^2}\right]\|w_x\|_{\L^2}\nonumber\\&\quad+\left[(\theta|v(1)|^{\delta}+(1-\theta)|v_n|^{\delta})+|\theta v_n(1)+(1-\theta)v(1)|^{\delta}\right]|v_n(1)-v(1)||w(1)|\nonumber\\&\to 0\ \text{ as }\ n\to\infty. 
	\end{align}
A similar calculation holds for the case, $|v_n|> \rho$ and $|v|\leq\rho$ also. 
Thus the operator $\mathscr{A}(\cdot)$ is demicontinuous and hence hemicontinuous also.
	
	\vskip 0.1cm
	\noindent \textbf{Step 3:} \emph{Coercivity.} By taking $w=0$ in \eqref{2.11}, one can easily see that 
	\begin{align}\label{2p16}
		\frac{\langle\mathscr{B}_{\rho,\omega}(v),v\rangle}{\|v\|_{\H^1}}\geq \frac{1}{2}\max\left\{\frac{\nu}{2},\beta\gamma\right\}\|v\|_{\H^1}-\|\mathscr{A}_{\rho}(0)\|_{\Y^*}\to \infty
	\end{align}
	for all $v\in\X$ such that $\|v\|_{\H^1}\to\infty$, and the coercivity of $\mathscr{B}_{\rho,\omega}(\cdot)$  follows. 
\end{proof}

Finally, we have the following result on the existence and uniqueness of solutions  for the problem \eqref{1.1}-\eqref{1.2} with the controls \eqref{1p3} and \eqref{1.3} for $\delta=1$ and $\delta=2$, respectively. 
\begin{theorem}\label{thm2.5}
	For any initial data $u_0\in\D(\mathscr{A})$, the system \eqref{2.3} possesses a \emph{unique strong solution} $u\in\C^1([0,\infty);\L^2(0,1))\cap\C([0,\infty);\H^3(0,1))$. 
\end{theorem}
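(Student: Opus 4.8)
The plan is to reduce the original problem \eqref{2.3} to a globally Lipschitz \emph{cut-off problem}, solve the latter by nonlinear semigroup theory, and then remove the cut-off by a priori estimates. For fixed $\rho\in\N$, consider the cut-off Cauchy problem
\begin{equation}\label{225}
	\left\{
	\begin{aligned}
		\frac{du_{\rho}(t)}{dt}+\mathscr{A}_{\rho}(u_{\rho})(t)&=0,\ t>0,\\
		u_{\rho}(0)&=u_0,
	\end{aligned}
	\right.
\end{equation}
with $\mathscr{A}_{\rho}$ given by \eqref{25}. By Lemma \ref{lem2.1}, the shifted operator $\mathscr{B}_{\rho,\omega}=\mathscr{A}_{\rho}+\omega\I$ is monotone for all $\omega\geq\omega_{\rho}$; since $\H=\L^2(0,1)$ is a Hilbert space, monotonicity of $\mathscr{B}_{\rho,\omega}$ on $\D(\mathscr{A})$ is exactly accretivity in $\H$, so that $\mathscr{A}_{\rho}$ is $\omega_{\rho}$-accretive. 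To feed the Crandall--Liggett theorem I must upgrade this to (quasi-)$m$-accretivity, i.e.\ establish the \emph{range condition} $\mathrm{Range}(\I+\lambda\mathscr{B}_{\rho,\omega})=\H$ for $\lambda>0$.

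The range condition is where the third-order term forces the modified Minty--Browder result. Fix $f\in\H\subset\Y^*$ and $\lambda>0$, and set $T:=\I+\lambda\mathscr{B}_{\rho,\omega}:\X\to\Y^*$; adding the identity to $\lambda\mathscr{B}_{\rho,\omega}$ preserves monotonicity and hemicontinuity and only strengthens coercivity, so Theorem \ref{thm2.2} yields $u\in\X$ with $T(u)=f$ in $\Y^*$. Since $u\in\X\subset\H^2(0,1)\hookrightarrow\C^1([0,1])$, all terms on the right-hand side of the strong form
\begin{equation*}
	\lambda\mu u_{xxx}=f-(1+\lambda\omega)u+\lambda\nu u_{xx}-\lambda\alpha f_{\rho}(u)_x+\lambda\beta u(1-u^{\delta})(u^{\delta}-\gamma)
\end{equation*}
belong to $\L^2(0,1)$, whence $u_{xxx}\in\L^2(0,1)$ and $u\in\H^3(0,1)$; integrating by parts back in the weak identity \eqref{25} then recovers the natural boundary condition $u_{xx}(1)=g_2(u(1))$, so $u\in\D(\mathscr{A})$ and $T(u)=f$ holds in $\H$. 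This gives $\mathrm{Range}(\I+\lambda\mathscr{B}_{\rho,\omega})=\H$, hence $\mathscr{B}_{\rho,\omega}$ is $m$-accretive and $\mathscr{A}_{\rho}$ is quasi-$m$-accretive with dense domain $\D(\mathscr{A})$. The Crandall--Liggett theorem (\cite[Theorems 5.1, 5.2]{JAW}), in its $\omega_{\rho}$-accretive form, then produces for $u_0\in\D(\mathscr{A})$ a unique strong solution $u_{\rho}$ of \eqref{225} with $u_{\rho}(t)\in\D(\mathscr{A})\subset\H^3(0,1)$ for all $t\geq0$, and the Hilbert-space regularity theory together with the continuity of $\mathscr{A}_{\rho}:\H^3(0,1)\to\L^2(0,1)$ yields $u_{\rho}\in\C^1([0,\infty);\L^2(0,1))\cap\C([0,\infty);\H^3(0,1))$. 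Uniqueness is immediate from \eqref{2.13}: for two solutions $u_{\rho},\tilde{u}_{\rho}$ one gets $\frac{d}{dt}\|u_{\rho}-\tilde{u}_{\rho}\|_{\L^2}^2\leq 2\omega_{\rho}\|u_{\rho}-\tilde{u}_{\rho}\|_{\L^2}^2$, and Gronwall's inequality closes the argument.

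It remains to remove the cut-off. Testing \eqref{225} with $u_{\rho}$ and using the coercivity computations \eqref{2k12}, \eqref{215} (for $\delta=1,2$) gives a uniform $\L^2$-bound, and differentiating the equation and testing appropriately (using that the dispersion contributes only the nonnegative boundary term \eqref{1.4}, rather than an indefinite interior term) gives a uniform $\H^1$-bound; since $\H^1(0,1)\hookrightarrow\C([0,1])$, these control $\|u_{\rho}(t)\|_{\L^{\infty}}$ on each finite interval, independently of $\rho$. Choosing $\rho$ larger than this bound makes the cut-off inactive, so $f_{\rho}(u_{\rho})=f(u_{\rho})$ and $u_{\rho}$ solves \eqref{2.3}; propagating the $\L^2$ and $\H^1$ estimates globally promotes this to a global strong solution in the asserted class. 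I expect the genuine obstacle to lie in two places: first, the range condition in $\H$, namely reconciling the mere $\Y^*$-surjectivity of Theorem \ref{thm2.2} with the $\L^2$-range demanded by Crandall--Liggett through the regularity bootstrap and the recovery of the third boundary condition; and second, the energy-estimate step, where the convective term $\frac{\alpha}{\delta+1}(u^{\delta+1})_x$ must be absorbed by the dissipation and reaction terms, an estimate that is precisely what restricts the argument to $\delta\in\{1,2\}$.
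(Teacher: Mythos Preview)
Your proposal is correct and follows essentially the same strategy as the paper: Theorem \ref{thm2.2} for the range condition, a regularity bootstrap to land in $\D(\mathscr{A})$, Crandall--Liggett for the cut-off problem \eqref{225}, then uniform $\L^2$/$\H^1$ energy estimates plus Agmon's inequality to remove the cut-off. One minor correction: for the $\H^1$ bound the paper tests \eqref{225} with $-u_{\rho,xx}$ (rather than differentiating), and there the dispersion contributes the signed boundary term $-\tfrac{\mu}{2}g_2^2(u_\rho(1,t))$ rather than \eqref{1.4}; this term is absorbed using the trace integrals already harvested at the $\L^2$ level, and the convective estimate \eqref{231} is exactly what forces $\delta\in\{1,2\}$, as you anticipated.
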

\begin{proof}
	The proof is divided into the following steps:
	\vskip 0.1 cm
	\noindent\textbf{Step 1:} \emph{Strong solution of the approximate problem:} 
	We use the well-known Crandall-Liggett    and Minty Theorems to establish this result. Since $\mathscr{B}_{\omega,\rho}$ is monotone, hemicontinuous and coercive from $\X$ to $\Y^*$, then by an application of Theorem \ref{thm2.2} (Minty's Theorem) yields that $ \mathscr{B}_{\omega,\rho}$ is onto, that is,  $\mathrm{Range}(\mathscr{B}_{\omega,\rho})=\Y^*$. Since  $\mathscr{B}_{\rho,\omega}:\X\to\Y^*$ is monotone, hemicontinuous and coercive,  a further application of \cite[Example 2.3.7]{HB} implies  that $\mathscr{B}_{\rho,\omega}$ (the restriction of $\mathscr{B}_{\rho,\omega}$ to $\H$) is maximal monotone in $\H$ with $\D(\mathscr{A}_{\rho})=\D(\mathscr{B}_{\omega,\rho})$. Since $\H^*=\H$, from \cite[Theorem 3.1]{VB1}, we infer that the maximal monotone and $m$-accretive sets coincide. Therefore,   for sufficiently large $\omega$,  $\mathscr{B}_{\omega,\rho}$ is $m$-accretive with domain $\D(\mathscr{A}_{\rho})=\D(\mathscr{B}_{\omega,\rho})$.  The above arguments can be justified in the following way: Since $\mathrm{Range}(\mathscr{B}_{\omega,\rho})=\Y^*$, for each $\xi\in\Y^*$, there exists a unique $\varphi\in\X$ such that $\mathscr{B}_{\omega,\rho}(\varphi)=\xi$ in the distributional sense, which means 
	\begin{align}
		\langle\mathscr{B}_{\omega,\rho}(\varphi),v\rangle=\langle\xi,v\rangle,\ \text{ for all }\ v\in\Y. 
	\end{align}
	Furthermore, if $\xi\in\L^2(0,1),$ then $\varphi\in\H^3(0,1)$ and it satisfies (cf. \cite{BGGL})
	\begin{equation}\label{2.21}
		\left\{
		\begin{aligned}
			\omega\varphi(x)-\nu\varphi_{xx}(x)+\mu\varphi_{xxx}(x)+\alpha(f_{\rho}(\varphi(x)))_x-\beta \varphi(x)(1-\varphi^{\delta}(x))(\varphi^{\delta}(x)-\gamma)&=\xi(x),\\
			\varphi(0)=0,\  \varphi_x(1)+g_1(\varphi(1))=0,\  \varphi_{xx}(1)-g_2(\varphi(1))&=0,
		\end{aligned}
		\right. 
	\end{equation}
	where the first equation in \eqref{2.21} holds for a.e. $x\in [0,1]$. In fact, taking the inner product with $\varphi$ to the first equation in \eqref{2.21}, we find 
	\begin{align}\label{240}
		\omega\|\varphi\|_{\L^2}^2+\langle\mathscr{A}_{\rho}(\varphi),\varphi\rangle&=\langle\xi,\varphi\rangle.
	\end{align}
	Applying \eqref{215} and \eqref{2k12} in \eqref{240}, we deduce 
	\begin{align}\label{2p41}
		\left(\omega-\frac{\beta}{4\theta}(1-2(2\theta-1)\gamma+\gamma^2)^2\right)\|\varphi\|_{\L^2}^2+\frac{\nu}{2}\|\varphi_x\|_{\L^2}^2+\beta\theta\|\varphi\|_{\L^{2(\delta+1)}}^{2(\delta+1)}\leq\frac{1}{2\nu}\|\xi\|_{\Y^*}^2,
	\end{align}
for some $0<\theta< 1$ and sufficiently large $\omega$. An application of Agmon's inequality (Lemma \ref{lem21}) yields 
\begin{align}\label{2a42}
	\max_{x\in[0,1]}|\varphi(x)|\leq\sqrt{2}\|\varphi\|_{\L^2}^{1/2}\|\varphi_x\|_{\L^2}^{1/2}\leq \frac{1}{\sqrt{\nu}}\|\xi\|_{\Y^*}. 
\end{align}
	Taking the inner product with $-\varphi_{xx}$ to the first equation in \eqref{2.21}, we obtain 
	\begin{align}\label{2p43}
		\omega\|\varphi_x\|_{\L^2}^2+(\mathscr{A}_{\rho}(\varphi),-\varphi_{xx})=(\xi,-\varphi_{xx}).
	\end{align}
Since $\frac{\rho}{\|\varphi\|_{\L^{\infty}}}<1$ for $|\varphi|>\rho$, we need to consider the case  $|\varphi|\leq\rho$ only as the calculations for the other case be performed in a similar way. 	For $|\varphi|\leq\rho$, we have 
	\begin{align}\label{230}
		&(\mathscr{A}(\varphi),-\varphi_{xx})\nonumber\\&=\nu\|\varphi_{xx}\|_{\L^2}^2-\frac{\mu}{2}g_2^2(\varphi(1))+\frac{\mu}{2}\varphi_{xx}^2(0)-\alpha(\varphi^{\delta}\varphi_{x},\varphi_{xx})+\beta(1+\gamma)(\varphi^{\delta+1},\varphi_{xx})\nonumber\\&\quad+\beta\gamma\|\varphi_{x}\|_{\L^2}^2+\beta\gamma g_1(\varphi(1))\varphi(1)+\beta(2\delta+1)\|\varphi^{\delta}\varphi_{x}\|_{\L^2}^2+\beta g_1(\varphi(1))\varphi^{2\delta+1}(1).
	\end{align}
	We estimate $\alpha|(\varphi^{\delta}\varphi_{x},\varphi_{xx})|$ as 
	\begin{align}\label{231}
		\alpha|(\varphi^{\delta}\varphi_{x},\varphi_{xx})|\leq\alpha\|\varphi_{xx}\|_{\L^2}\|\varphi^{\delta}\|_{\L^{\infty}}\|\varphi_{x}\|_{\L^2}\leq\frac{\nu}{4}\|\varphi_{xx}\|_{\L^2}^2+\frac{\alpha^2}{\nu}\|\varphi\|_{\L^{\infty}}^{2\delta}\|\varphi_{x}\|_{\L^2}^2.
	\end{align}
	Similarly, we estimate $\beta(1+\gamma)|(\varphi^{\delta+1},\varphi_{xx})|$ as 
	\begin{align}\label{232}
		\beta(1+\gamma)|(\varphi^{\delta+1},\varphi_{xx})|\leq\beta(1+\gamma)\|\varphi^{\delta+1}\|_{\L^2}\|\varphi_{xx}\|_{\L^2}\leq\frac{\nu}{4}\|\varphi_{xx}\|_{\L^2}^2+\frac{\beta^2(1+\gamma)^2}{\nu}\|\varphi\|_{\L^{2(\delta+1)}}^{2(\delta+1)}.
	\end{align}
	Substituting  \eqref{230}-\eqref{232} in \eqref{2p43} leads to 
	\begin{align}\label{243}
	&\omega\|\varphi_x\|_{\L^2}^2+	\frac{\nu}{4}\|\varphi_{xx}\|_{\L^2}^2+\beta(2\delta+1)\|\varphi^{\delta}\varphi\|_{\L^2}^2\nonumber\\&\leq\frac{1}{\nu}\|\xi\|_{\L^2}^2+\frac{\mu}{2}g_2^2(\varphi)+\frac{\alpha^2}{\nu}\|\varphi\|_{\L^{\infty}}^{2\delta}\|\varphi_x\|_{\L^2}^2+\frac{\beta^2(1+\gamma)^2}{\nu}\|\varphi\|_{\L^{2(\delta+1)}}^{2(\delta+1)}.
	\end{align}
From the definition of controls given in \eqref{1.3} and \eqref{1p3}, 	using Agmon's inequality (Lemma \ref{lem21}) and the estimate \eqref{2a42}, we deduce from \eqref{243} that 
\begin{align}\label{2p44}
	\omega\|\varphi_x\|_{\L^2}^2+	\frac{\nu}{4}\|\varphi_{xx}\|_{\L^2}^2+\beta(2\delta+1)\|\varphi^{\delta}\varphi\|_{\L^2}^2&\leq C_{\nu,\mu,\alpha,\beta,\gamma,\delta}\left(1+\|\xi\|_{\L^2}^{8\delta}\right),
\end{align}
where we have used \eqref{2p41}. Taking the inner product with $\varphi_{xxx}$ to the first equation in \eqref{2.21}, we get 
\begin{align}
	(\mathscr{A}_{\rho}(\varphi),\varphi_{xxx})=(\xi,\varphi_{xxx})-\omega(\varphi,\varphi_{xxx}).
\end{align}
An integration by parts and  application of H\"older's and Young's inequalities yield
\begin{align}\label{246}
	&\mu\|\varphi_{xxx}\|_{\L^2}^2+\frac{\nu}{2}\varphi_{xx}^2(0)\nonumber\\&=(\xi,\varphi_{xxx})-(\omega+\beta\gamma)(\varphi,\varphi_{xxx})-\alpha(\varphi^{\delta}\varphi_x,\varphi_{xxx})+\beta(1+\gamma)(\varphi^{\delta+1},\varphi_{xxx})-\beta(\varphi^{2\delta+1},\varphi_{xxx})\nonumber\\&\leq\frac{3\mu}{4}\|\varphi_{xxx}\|_{\L^2}^2+\frac{1}{\mu}\|\xi\|_{\L^2}^2+\frac{2(\omega+\beta\gamma)^2}{\mu}\|\varphi\|_{\L^2}^2+\frac{2\alpha^2}{\mu}\|\varphi^{\delta}\varphi\|_{\L^2}^2+\frac{2\beta^2(1+\gamma)^2}{\mu}\|\varphi\|_{\L^{2(\delta+1)}}^{2(\delta+1)}\nonumber\\&\quad+\frac{2\beta^2}{\mu}\|\varphi\|_{\L^{4\delta+2}}^{4\delta+2}.
	\end{align}
	Using \eqref{2p44} in \eqref{246}, we finally arrive at 
	\begin{align}\label{2p47}
		&\frac{\mu}{4}\|\varphi_{xxx}\|_{\L^2}^2\leq C_{\nu,\mu,\alpha,\beta,\gamma,\delta,\omega}\left(1+\|\xi\|_{\L^2}^{8\delta}\right).
	\end{align}
	Combining \eqref{2p41}, \eqref{2p44} and \eqref{2p47}, one can easily seen that $$\|\mathscr{B}_{\omega,\rho}(\varphi)\|_{\L^2}\leq C_{\nu,\mu,\alpha,\beta,\gamma,\delta,\omega}\left(1+\|\xi\|_{\L^2}^{4\delta}\right),$$ that is, $\H\subseteq\mathrm{Range}(\mathscr{B}_{\omega,\rho}(\varphi))$. Taking the inner product with $\xi$ to the first equation in \eqref{2.21}, we infer 
	\begin{align*}
		\|\xi\|_{\L^2}\leq\|\mathscr{B}_{\omega,\rho}(\varphi)\|_{\L^2},
	\end{align*}
	so that $\mathrm{Range}(\mathscr{B}_{\omega,\rho}(\varphi))\subseteq\H$. Therefore, it is immediate that $\mathrm{Range}(\mathscr{B}_{\omega,\rho}(\varphi))=\H$. Thus, for each $\lambda\in(0,\frac{1}{\omega_{\rho}})$, there exists a $\varphi\in\X$ such that $\mathscr{B}_{\rho,\frac{1}{\lambda}}(\varphi)=\frac{\xi}{\lambda}$, which yields  $(\I+\lambda\mathscr{A}_{\rho})(\varphi)=\xi$, so that $\mathrm{Range}(\I+\lambda\mathscr{A}_{\rho})=\H$.

%Note that $\mathscr{B}_{\rho,\omega}(v)=-\nu v_{xx}+\mu v_{xxx}+\alpha f_{\rho,x}-\beta v(1-v^{\delta})(v^{\delta}-\gamma)+\omega\I:\X\to\Y^*$. 

Then by an application of the Crandall-Liggett  Theorem (\cite[Theorem 4.1.3, 4.1.4]{VB}, \cite{MGTL}, \cite[Theorems 5.1, 5.2]{JAW}), we have that the problem 
\begin{equation}\label{225}
	\left\{
	\begin{aligned}
		\frac{du_{\rho}(t)}{dt}+\mathscr{A}_{\rho}(u_{\rho})(t)&=0,\ t>0,\\
		u_{\rho}(0)&=u_0,
	\end{aligned}\right.
\end{equation}
has a \emph{unique strong solution} $u_{\rho}\in\C\left([0,\infty);\D(\mathscr{A}_{\rho})\right)\cap\C^1\left((0,\infty);\L^2(0,1)\right)\subset \C\left([0,\infty);\H^3(0,1)\right)\cap\C^1\left((0,\infty);\L^2(0,1)\right) $ for all $\rho>0$ and $u_0\in\D(\mathscr{A})$. Moreover, \eqref{225} has a unique mild solution
$$u_{\rho}(t)=\mathrm{S}_{\rho}(t)(u_0)=\lim\limits_{n\to\infty}\left(\I+\frac{t}{n}\mathscr{A}_{\rho}\right)^{-n}(u_0),$$
 which is a limit in $\C([0,T];\L^2(0,1))$ of a sequence of strong solutions (\cite[Theorem 4.1.3]{VB}).

\vskip 0.1 cm
\noindent\textbf{Step 2:} \emph{Uniform bounds for the solutions:} 
Let us now show the existence and uniqueness of solutions  for the problem \eqref{2.3}. We first establish  the uniform boundedness of the sequence of solutions $\{u_{\rho}\}_{\rho>0}$. Taking the inner product with $u_{\rho}$ in \eqref{225} and using a calculation similar to \eqref{2k12}, one obtains 
\begin{align}\label{239}
	\|u_{\rho}(t)\|_{\L^2}^2&=\|u_0\|_{\L^2}^2-2\int_0^t\langle\mathscr{A}_{\rho}(u_{\rho})(s),u_{\rho}(s)\rangle d s\nonumber\\&\leq \|u_0\|_{\L^2}^2-2\nu\int_0^t\|u_{\rho,x}(s)\|_{\L^2}^2ds -\eta\int_0^t u_{\rho}^2(1,s)d s-\frac{\alpha^2}{\eta(\delta+2)^2}\int_0^tu_{\rho}^{2(\delta+1)}(1,s)ds\nonumber\\&\quad-\mu\int_0^t(g_1(u_{\rho}(1,s)))^2ds-\beta\int_0^t\|u_{\rho}(s)\|_{\L^{2(\delta+1)}}^{2(\delta+1)}ds+\beta(1+\gamma^2)\int_0^t\|u_{\rho}(s)\|_{\L^2}^2ds,
\end{align}
for all $t\in[0,T]$ and $|u_{\rho}|\leq\rho$. A similar expression involving  $\left(\frac{\rho}{\|u_{\rho}\|_{\L^{\infty}}}\right)^{\delta+1}$ holds for $|u_{\rho}|>\rho$ also. Since $\frac{\rho}{\|u_{\rho}\|_{\L^{\infty}}}<1$ for $|u_{\rho}|>\rho$, we need to consider the case  $|u_{\rho}|\leq\rho$ only. An application of Gronwall's inequality in \eqref{239} yields 
\begin{align}
	&	\|u_{\rho}(t)\|_{\L^2}^2+2\nu\int_0^t\|u_{\rho,x}(s)\|_{\L^2}^2ds+\frac{\alpha^2}{\eta(\delta+2)^2}\int_0^tu_{\rho}^{2(\delta+1)}(1,s)ds\nonumber\\&\quad+\frac{\mu}{\delta}\int_0^tg_2(u_{\rho}(1,s))u_{\rho}(1,s)ds+\beta\int_0^t\|u_{\rho}(s)\|_{\L^{2(\delta+1)}}^{2(\delta+1)}ds\leq \|u_0\|_{\L^2}^2e^{\beta T(1+\gamma^2)},
\end{align}
for all $t\in[0,T]$, and the right hand side is independent of $\rho$. 

%Taking the inner product with $|u_{\rho}|^{2p-2}u_{\rho}$ in \eqref{225}, we obtain for all $t\in[0,T]$
%\begin{align}
%	\|u_{\rho}(t)\|_{\L^{2p}}^{2p}&=\|u_{0}\|_{\L^{2p}}^{2p}-2p\int_0^t\langle\mathscr{A}_{\rho}u_{\rho}(s),|u_{\rho}(s)|^{2p-2}u_{\rho}(s)\rangle ds.
%\end{align}

Taking the inner product with $-u_{\rho,xx}$ in \eqref{225}, we find  for a.e.  $t\in[0,T]$
\begin{align}\label{241}
	-	\left(u_{\rho,t}(t),u_{\rho,xx}(t)\right)=(\mathscr{A}_{\rho}u_{\rho}(t),u_{\rho,xx}(t)).
\end{align}
We first consider the case $\delta=1$. Note that 
\begin{align}\label{229}
	-&	\int_0^1u_{\rho,t}(t,x)u_{\rho,xx}(t,x)dx\nonumber\\&=-u_{\rho,t}(t,x)u_{\rho,x}(t,x)\big|_{0}^1+\int_0^1u_{\rho,xt}(t,x)u_{\rho,x}(t,x)dx\nonumber\\&=\frac{1}{2}\frac{d}{dt}\|u_{\rho,x}(t)\|_{\L^2}^2+\frac{1}{\nu} u_{\rho,t}(1,t)\left(\eta+\frac{\alpha^2}{\eta(\delta+2)^2}u_{\rho}^{2\delta}(1,t)\right)u_{\rho}(1,t)\nonumber\\&=\frac{1}{2}\frac{d}{dt}\left[\|u_{\rho,x}(t)\|_{\L^2}^2+\frac{\eta}{\nu}u_{\rho}^2(1,t)+\frac{\alpha^2}{\nu\eta(\delta+2)^2(\delta+1)}u_{\rho}^{2(\delta+1)}(1,t)\right],
%	\nonumber\\&=\frac{1}{2}\frac{d}{dt}\left[\|u_{\rho,x}(t)\|_{\L^2}^2+g_1(u_{\rho}(1,t))u_{\rho}(1,t)\right],
\end{align}
since $u_{\rho,t}(0,t)=0$. Substituting the calculations \eqref{230}-\eqref{232} and \eqref{229}  in  \eqref{241}, and then integrating from $0$ to $t$, we deduce 
\begin{align}\label{233}
	&	\|u_{\rho,x}(t)\|_{\L^2}^2+\frac{\eta}{\nu}u_{\rho}^2(1,t)+\frac{\alpha^2}{\nu\eta(\delta+2)^2(\delta+1)}u_{\rho}^{2(\delta+1)}(1,t)+\nu\int_0^t\|u_{\rho,xx}(s)\|_{\L^2}^2ds\nonumber\\&\quad+2\beta(2\delta+1)\int_0^t\|u_{\rho}^{\delta}(s)u_{\rho,x}(s)\|_{\L^2}^2ds\nonumber\\&\leq \|u_{0,x}\|_{\L^2}^2+\frac{\eta}{\nu}u_{0}^2(1)+\frac{\alpha^2}{\nu\eta(\delta+2)^2(\delta+1)}u_{0}^{2(\delta+1)}(1)\nonumber\\&\quad+\mu \int_0^t\left[\frac{\delta}{\nu^2}\left(\eta+\frac{\alpha^2}{\eta(\delta+2)^2}u_{\rho}^{2\delta}(1,s)\right)^2u(1,s)\right]^2ds+\frac{2\alpha^2}{\nu}\int_0^t\|u_{\rho}(s)\|_{\L^{\infty}}^{2\delta}\|u_{\rho,x}(s)\|_{\L^2}^2ds\nonumber\\&\quad+\frac{2\beta^2(1+\gamma)^2}{\nu}\int_0^t\|u_{\rho}(s)\|_{\L^{2(\delta+1)}}^{2(\delta+1)}ds,
\end{align}
for all $t\in[0,T]$. Note that for the time dependent problem, the estimate \eqref{231} is valid for $1\leq\delta\leq 2$ only.  Application of Gronwall's and Agmon's  inequalities  in \eqref{233} yield
\begin{align}\label{244}
	&	\|u_{\rho,x}(t)\|_{\L^2}^2+\frac{\eta}{\nu}u_{\rho}^2(1,t)+\frac{\alpha^2}{\nu\eta(\delta+2)^2(\delta+1)}u_{\rho}^{2(\delta+1)}(1,t)+\nu\int_0^t\|u_{\rho,xx}(s)\|_{\L^2}^2ds\nonumber\\&\quad+2\beta(2\delta+1)\int_0^t\|u_{\rho}^{\delta}(s)u_{\rho,x}(s)\|_{\L^2}^2ds\nonumber\\&\leq\left\{\|u_{0,x}\|_{\L^2}^2+\frac{1}{\nu(\delta+1)}\left[u_{0}^2(1)+\frac{\alpha^2}{\eta(\delta+2)^2}u_{0}^{2(\delta+1)}(1)\right]+\frac{2\beta^2(1+\gamma)^2}{\nu}\int_0^T\|u_{\rho}(s)\|_{\L^{2(\delta+1)}}^{2(\delta+1)}ds\right\}\nonumber\\&\quad\times\exp\left\{\frac{\delta(\delta+1)}{\nu}\int_0^T\left(\eta u_{\rho}^{2}(1,s)+\frac{\alpha^2}{\eta(\delta+2)^2}u_{\rho}^{2\delta+2}(1,s)\right)ds+\frac{2\alpha^2}{\nu}\int_0^T\|u_{\rho}(s)\|_{\L^{\infty}}^{2\delta}ds\right\}\nonumber\\&\leq C\left(\|u_0\|_{\L^2},\alpha,\beta,\gamma,\delta,\mu,\nu,T\right),
\end{align}
for all $t\in[0,T]$ and $\delta=1$.  Once again Agmon's inequality leads to
\begin{align}\label{247}
	\sup_{t\in[0,T]}\|u_{\rho}(t)\|_{\L^{\infty}}\leq \sup_{t\in[0,T]}\|u_{\rho}(t)\|_{\H^1}^{1/2}\sup_{t\in[0,T]}\|u_{\rho}(t)\|_{\L^2}^{1/2}\leq C,
\end{align}
where $C$ is independent of $\rho$. Therefore, for $\rho\geq C$, $f_{\rho}(y)=f(y)$, and hence $u=u_{\rho}$ is a solution to \eqref{2.3} on $[0,T]$. In this way, one can find the standard existence result $u\in\C\left([0,\infty);\D(\mathscr{A}_{\rho})\right)\cap\C^1\left((0,\infty);\L^2(0,1)\right)\subset \C\left([0,\infty);\H^3(0,1)\right)\cap\C^1\left((0,\infty);\L^2(0,1)\right) $ for the solution of the generalized Korteweg-de Vries-Burgers-Huxley equation \eqref{1.1}-\eqref{1.2} with the boundary control \eqref{1.3}. The case of $\delta=2$ can be established in a similar way. 

Let us now prove the uniqueness for $\delta=1,2$. Let $u_1$ and $u_2$ be two solutions of \eqref{2.3} on $[0,T]$. Then $u=u_1-u_2$ satisfies the following energy equality:
\begin{equation}\label{235}
	\|u(t)\|_{\L^2}^2=\|u_0\|_{\L^2}^2-2\int_0^t\langle\mathscr{A}(u_1)(s)-\mathscr{A}(u_2)(s),u_1(s)-u_2(s)\rangle ds,
\end{equation}
for all $t\in[0,T]$. The term $\langle\mathscr{A}(u_1)-\mathscr{A}(u_2),u_1-u_2\rangle $  can be  estimated similar way as in \eqref{2.11} except for the term $\langle(f(u_1))_x-(f(u_2))_x,u_1-u_2\rangle$. We estimate it using H\"older's, Agmon's and Young's inequalities as 
\begin{align}
	\langle(f(u_1))_x-(f(u_2))_x,u_1-u_2\rangle&=\langle u_1^{\delta}u_{1,x}-u_2^{\delta}u_{2,x},u_1-u_2\rangle\nonumber\\&=\langle (u_1^{\delta}-u_2^{\delta})u_{1,x},u_1-u_2\rangle+\langle u_2^{\delta}(u_{1,x}-u_{2,x}),u_1-u_2\rangle\nonumber\\&\leq \delta\|(\theta u_1+(1-\theta)u_2)^{\delta-1}\|_{\L^{2}}\|u_{1,x}\|_{\L^2}\|u_1-u_2\|_{\L^{\infty}}^2\nonumber\\&\quad+\|u_2\|_{\L^{\infty}}^{\delta}\|u_{1,x}-u_{2,x}\|_{\L^2}\|u_1-u_2\|_{\L^2}\nonumber\\&\leq\frac{\nu}{2}\|u_x\|_{\L^2}^2+\frac{\delta^2}{\nu}\left(\|u_1\|_{\L^{2(\delta-1)}}^{2(\delta-1)}+\|u_2\|_{\L^{2(\delta-1)}}^{2(\delta-1)}\right)\|u_{1,x}\|_{\L^2}^2\|u\|_{\L^2}^2\nonumber\\&\quad+\frac{1}{\nu}\|u_2\|_{\L^{\infty}}^{2\delta}\|u\|_{\L^2}^2. 
\end{align}
Therefore, from \eqref{235}, we infer 
\begin{align}\label{237}
	&	\|u(t)\|_{\L^2}^2+\nu\int_0^t\|u_{x}(s)\|_{\L^2}^2ds+\frac{\beta}{2^{2\delta}}\int_0^t\|u(s)\|_{\L^{2(\delta+1)}}^{2(\delta+1)}ds\nonumber\\&\leq \|u_0\|_{\L^2}^2+2^{2\delta}\beta(1+\gamma)^2(\delta+1)^2\int_0^t\|u(s)\|_{\L^2}^2ds+\frac{2}{\nu}\int_0^t\|u_2(s)\|_{\L^{\infty}}^{2\delta}\|u(s)\|_{\L^2}^2ds\nonumber\\&\quad+\frac{2\delta^2}{\nu}\int_0^t\left(\|u_1(s)\|_{\L^{2(\delta-1)}}^{2(\delta-1)}+\|u_2(s)\|_{\L^{2(\delta-1)}}^{2(\delta-1)}\right)\|u_{1,x}(s)\|_{\L^2}^2\|u(s)\|_{\L^2}^2ds,
\end{align}
for all $t\in[0,T]$. An application of Gronwall's inequality in \eqref{237} provides
\begin{align}\label{238}
	&	\|u(t)\|_{\L^2}^2+\nu\int_0^t\|u_{x}(s)\|_{\L^2}^2ds\nonumber\\&\leq \|u_0\|_{\L^2}^2e^{2^{2\delta}\beta(1+\gamma)^2(\delta+1)^2T}\exp\left(\frac{2}{\nu}\sup_{t\in[0,T]}\|u_2(t)\|_{\L^2}^{2\delta}\int_0^T\|u_2(t)\|_{\H^1}^{2\delta}dt\right)\nonumber\\&\quad\times\exp\left(\frac{2\delta^2}{\nu}\sup_{t\in[0,T]}\left(\|u_1(t)\|_{\L^{2(\delta-1)}}^{2(\delta-1)}+\|u_2(t)\|_{\L^{2(\delta-1)}}^{2(\delta-1)}\right)\int_0^T\|u_{1,x}(t)\|_{\L^2}^2dt\right),
\end{align}
for all $t\in[0,T]$. Since $\delta=1,2$, the right hand side of \eqref{238} is finite even for $u_0\in\L^2(0,1)$ and the uniqueness of solutions follows since $u_0=0$. 
	\end{proof}

\section{Stabilization}\label{sec3}\setcounter{equation}{0}    The aim of this section  is to establish the $\L^2$-, $\H^1$-  and pointwise exponential stabilization results for the generalized KdV-Burgers-Huxley equation \eqref{1.1}-\eqref{1.2} subject to the controls \eqref{1p3} and \eqref{1.3}.
\begin{theorem}\label{thm2.3}
	For $$\nu>\frac{\beta}{4}(1-\gamma)^2,$$ 	the generalized KdV-Burgers-Huxley equation \eqref{1.1}-\eqref{1.2} is exponentially stable in $\L^2(0,1)$ under the control laws \eqref{1.3}, \eqref{1p3},  respectively for $\delta=1,2$. 
\end{theorem}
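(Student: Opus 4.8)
The plan is to run the standard energy (Lyapunov) method, using the coercivity estimate that has already been recorded in \eqref{2k12} and \eqref{215}. Let $u$ be the unique strong solution furnished by Theorem \ref{thm2.5}, so that $u\in\C^1([0,\infty);\L^2(0,1))\cap\C([0,\infty);\H^3(0,1))$; in particular $u(t)\in\D(\mathscr{A})$ for every $t\geq 0$, and both $\frac{du}{dt}(t)$ and $\mathscr{A}(u)(t)$ belong to $\L^2(0,1)$. First I would take the $\L^2$-inner product of the evolution equation \eqref{2.3} with $u(t)$; the regularity of the strong solution legitimises the identity
\begin{align*}
	\frac12\frac{d}{dt}\|u(t)\|_{\L^2}^2=-\langle\mathscr{A}(u)(t),u(t)\rangle,\qquad t>0 .
\end{align*}
The a priori bound $\sup_{t}\|u(t)\|_{\L^{\infty}}\leq C$ obtained in the course of Theorem \ref{thm2.5} shows that the cutoff is inactive along the trajectory (choose $\rho>C$, so that $f_{\rho}=f$), whence $\mathscr{A}(u(t))=\mathscr{A}_{\rho}(u(t))$ and the coercivity computation leading to \eqref{2k12} (for $\delta=1$) and \eqref{215} (for $\delta=2$) applies verbatim to $\mathscr{A}$ itself.

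Next I would bound the right-hand side from below, reproducing the chain of inequalities in \eqref{2k12}--\eqref{215}. The mechanism is threefold: the control relation $g_2(k)k=\delta(g_1(k))^2$ converts the boundary contribution $-\frac{\mu}{2}(g_1(u(1)))^2+\mu g_2(u(1))u(1)$ into the nonnegative term $\frac{\mu}{2}(g_1(u(1)))^2$ (this is precisely the flux identity \eqref{1.4}); the convective boundary/volume terms combine with $\eta u^2(1)+\frac{\alpha^2}{\eta(\delta+2)^2}u^{2(\delta+1)}(1)$ into a nonnegative quantity by completing the square, exactly as in the passage from the first to the second line of \eqref{2k12}; and the reaction part of $\langle\mathscr{A}(u),u\rangle$, which from \eqref{217} equals $\beta\int_0^1 u^2(u^{\delta}-1)(u^{\delta}-\gamma)\,dx$, is controlled by the pointwise estimate $\beta u^2(u^{\delta}-1)(u^{\delta}-\gamma)\geq-\frac{\beta}{4}(1-\gamma)^2u^2$, valid because $\min_{s\in\R}(s-1)(s-\gamma)=-\frac{1}{4}(1-\gamma)^2$ (and the minimiser $s=\frac{1+\gamma}{2}$ is attainable by $s=u^{\delta}$ for $\delta=1,2$). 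Collecting these gives
\begin{align*}
	\langle\mathscr{A}(u),u\rangle\geq \nu\|u_x\|_{\L^2}^2-\frac{\beta}{4}(1-\gamma)^2\|u\|_{\L^2}^2 .
\end{align*}

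Finally, since $u(0,t)=0$, the Poincar\'e inequality (Lemma \ref{lem21}) yields $\|u\|_{\L^2}\leq\|u_x\|_{\L^2}$, so that
\begin{align*}
	\langle\mathscr{A}(u),u\rangle\geq\Big(\nu-\tfrac{\beta}{4}(1-\gamma)^2\Big)\|u\|_{\L^2}^2 .
\end{align*}
Setting $\kappa:=2\big(\nu-\frac{\beta}{4}(1-\gamma)^2\big)$, which is strictly positive exactly under the hypothesis $\nu>\frac{\beta}{4}(1-\gamma)^2$, the energy identity turns into the differential inequality $\frac{d}{dt}\|u(t)\|_{\L^2}^2\leq-\kappa\|u(t)\|_{\L^2}^2$, and Gronwall's lemma gives $\|u(t)\|_{\L^2}^2\leq e^{-\kappa t}\|u_0\|_{\L^2}^2$ for all $t\geq 0$, which is the claimed exponential stability in $\L^2(0,1)$. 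Given \eqref{2k12}--\eqref{215}, the argument is essentially immediate; the only genuinely delicate point is the reaction term, where the sharp constant $\frac{1}{4}(1-\gamma)^2$ is exactly what produces the threshold on $\nu$, so I expect the pointwise lower bound on the reaction integrand to be the crux of the proof.
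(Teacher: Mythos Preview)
Your argument is correct and follows essentially the same Lyapunov/energy route as the paper: take the $\L^2$-inner product with $u$, invoke the coercivity bounds \eqref{2k12}--\eqref{215}, apply Poincar\'e, and conclude via Gronwall. One small citation slip: the Poincar\'e inequality you need is Lemma~\ref{lem3.1}, not Lemma~\ref{lem21} (which is Agmon's inequality).
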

\begin{proof}
	Let $\mathcal{V}(\cdot)$ be a  Lyapunov function defined by
	\begin{align}\label{eq2}
		\mathcal{V}(t)=\frac{1}{2}\int_0^1u^2(x,t)dx,\ t\geq 0. 
	\end{align}
	Taking the derivative with respect to $t$, performing an integration by parts and then using \eqref{215} and \eqref{2k12}, for $\delta=1,2$, respectively,  we deduce 
	\begin{align}\label{eq1}
		\dot{\mathcal{V}}(t)&=\int_0^1u(x,t)u_t(x,t)dx\nonumber\\&=\int_0^1u(x,t)[\nu u_{xx}(x,t)-\mu u_{xxx}-\alpha u^{\delta}(x,t)u_x(x,t)+\beta u(x,t)(1-u^{\delta}(x,t))(u^{\delta}(x,t)-\gamma)]dx\nonumber\\&=-\nu\int_0^1u_x^2(x,t)dx+\nu u(1,t)u_x(1,t)-\mu u(1,t)u_{xx}(1,t)+\frac{\mu}{2} u_x^2(1,t)-\frac{\mu}{2} u_x^2(0,t)\nonumber\\&\quad-\frac{\alpha}{\delta+2}u^{\delta+2}(1,t)+\beta(1+\gamma)\int_0^1u^{\delta+2}(x,t)dx-\beta\gamma\int_0^1u^2(x,t)dx-\beta\int_0^1u^{2(\delta+1)}(x,t)dx\nonumber\\&\leq-2\left(\nu-\frac{\beta}{4}(1-\gamma)^2\right)\mathcal{V}(t),
	\end{align}
for all $t>0$. Thus, we deduce  $\|u(t)\|_{\L^2}\leq \|u_0\|_{\L^2}e^{-\zeta t},$ for all $t\geq 0$ and since $\zeta=\left(\nu-\frac{\beta}{4}(1-\gamma)^2\right)>0$,  $\|u(t)\|_{\L^2}$ converges to zero exponentially as $t\to\infty$. 
\end{proof}

By controlling the convective term using  diffusion and reaction terms, under further restrictions on $\nu$,  for any $\delta\in[1,\infty)$, one can obtain $\L^2$-stabilization of generalized KdV-Burgers-Huxley equation by using  an another control law.
\begin{theorem}\label{thm2.4}
	For any $\delta\in[1,\infty)$ and for \begin{align}\label{2p55}\nu>\max\left\{\frac{\alpha^2}{2\beta(2\delta+1)},\frac{\beta}{2}(1+\gamma^2)+\frac{\alpha^2}{2\beta(\delta+2)^{\frac{2(\delta+1)}{\delta+2}}}\right\},\end{align}	the generalized KdV-Burgers-Huxley equation \eqref{1.1}-\eqref{1.2} is exponentially stable in $\L^2(0,1)$ under the control law
	\begin{equation}\label{256}
		u(0,t)=0,\  u_x(1,t)=- u(1,t),\  u_{xx}(1,t)=u(1,t).
	\end{equation}
\end{theorem}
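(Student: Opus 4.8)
The plan is to run the same $\L^2$ Lyapunov argument as in Theorem \ref{thm2.3}, with $\mathcal{V}(t)=\frac12\int_0^1 u^2(x,t)dx$, and to show $\dot{\mathcal{V}}(t)\le-2\zeta\mathcal{V}(t)$ for a constant $\zeta>0$ fixed by \eqref{2p55}. First I would differentiate $\mathcal{V}$ in $t$, substitute \eqref{1.1}, and integrate by parts term by term, now using the \emph{linear} control \eqref{256}, i.e. $u(0)=0$, $u_x(1)=-u(1)$, $u_{xx}(1)=u(1)$. The diffusion and dispersion terms then produce only the interior dissipation $-\nu\|u_x\|_{\L^2}^2$ together with the nonpositive boundary contributions $-\nu u^2(1)$, $-\frac{\mu}{2}u^2(1)$ and $-\frac{\mu}{2}u_x^2(0)$; the convective term collapses (since $u(0)=0$) to the single boundary term $-\frac{\alpha}{\delta+2}u^{\delta+2}(1)$; and the reaction term yields $-\beta\|u\|_{\L^{2(\delta+1)}}^{2(\delta+1)}+\beta(1+\gamma)\int_0^1 u^{\delta+2}dx-\beta\gamma\|u\|_{\L^2}^2$.

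The essential difference from Theorem \ref{thm2.3} is that there the nonlinear controls \eqref{1.3}, \eqref{1p3} were designed so that the boundary contribution from $u_{xx}(1)$ exactly cancels the convective boundary term $\frac{\alpha}{\delta+2}u^{\delta+2}(1)$, whereas the linear control \eqref{256} offers no such cancellation. Consequently the two potentially positive quantities, namely the convective boundary term $-\frac{\alpha}{\delta+2}u^{\delta+2}(1)$ and the interior reaction cross term $\beta(1+\gamma)\int_0^1 u^{\delta+2}dx$, must \emph{both} be absorbed into the available dissipation $-\nu\|u_x\|_{\L^2}^2$, $-\beta\|u\|_{\L^{2(\delta+1)}}^{2(\delta+1)}$ and $-\beta\gamma\|u\|_{\L^2}^2$. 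This absorption is exactly where the restriction \eqref{2p55} on $\nu$ comes from, and it is the crux of the proof.

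For the reaction cross term I would use Young's inequality with conjugate exponents $2,2$ in the pointwise form $\beta(1+\gamma)|u|^{\delta+2}\le\frac{\beta}{2}|u|^{2(\delta+1)}+\frac{\beta(1+\gamma)^2}{2}u^2$; combining this with $-\beta\|u\|_{\L^{2(\delta+1)}}^{2(\delta+1)}$ and $-\beta\gamma\|u\|_{\L^2}^2$ and using $\frac{(1+\gamma)^2}{2}-\gamma=\frac{1+\gamma^2}{2}$ leaves the retained dissipation $-\frac{\beta}{2}\|u\|_{\L^{2(\delta+1)}}^{2(\delta+1)}$ plus the term $\frac{\beta(1+\gamma^2)}{2}\|u\|_{\L^2}^2$ (this is the origin of the factor $1+\gamma^2$ in \eqref{2p55}). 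For the convective boundary term I would write $-\frac{\alpha}{\delta+2}u^{\delta+2}(1)=-\alpha\int_0^1 u^{\delta+1}u_x\,dx$ and estimate it by H\"older's inequality together with a carefully optimized Young's inequality, balancing $\|u\|_{\L^{2(\delta+1)}}^{2(\delta+1)}$ against $\|u_x\|_{\L^2}^2$ so that its $\L^{2(\delta+1)}$ part is swallowed by the retained $\frac{\beta}{2}\|u\|_{\L^{2(\delta+1)}}^{2(\delta+1)}$ while the remainder takes the form $\frac{\alpha^2}{2\beta(\delta+2)^{\frac{2(\delta+1)}{\delta+2}}}\|u_x\|_{\L^2}^2$. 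Tracking the \emph{optimal} constant in this single balancing is precisely what produces the sharp exponent $(\delta+2)^{\frac{2(\delta+1)}{\delta+2}}$ in \eqref{2p55}, and getting this constant right (rather than the cruder $\frac{\alpha^2}{2\beta}$) is the delicate point.

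Collecting everything gives $\dot{\mathcal{V}}(t)\le-\big(\nu-\frac{\alpha^2}{2\beta(\delta+2)^{\frac{2(\delta+1)}{\delta+2}}}\big)\|u_x\|_{\L^2}^2+\frac{\beta(1+\gamma^2)}{2}\|u\|_{\L^2}^2$. Applying the Poincar\'e inequality (Lemma \ref{lem21}), which for $u(0)=0$ gives $\|u\|_{\L^2}\le\|u_x\|_{\L^2}$, converts the surviving gradient dissipation into $\|u\|_{\L^2}^2$, and \eqref{2p55} guarantees that $\zeta:=\nu-\frac{\beta}{2}(1+\gamma^2)-\frac{\alpha^2}{2\beta(\delta+2)^{\frac{2(\delta+1)}{\delta+2}}}>0$, whence $\dot{\mathcal{V}}(t)\le-2\zeta\mathcal{V}(t)$. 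Gronwall's inequality then yields $\|u(t)\|_{\L^2}\le\|u_0\|_{\L^2}e^{-\zeta t}$, i.e. exponential stability in $\L^2(0,1)$. I expect the main obstacle to be the convective boundary term: unlike in Theorem \ref{thm2.3} it is not annihilated by the control, so for general $\delta\in[1,\infty)$ it must be dominated by the diffusion and reaction dissipation, and it is the optimal constant in this domination, not the reaction estimate, that forces the lower bound on $\nu$ recorded in \eqref{2p55}.
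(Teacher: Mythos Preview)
Your Lyapunov/stabilization argument is essentially the paper's: the paper differentiates the same $\mathcal{V}$, uses the linear boundary data \eqref{256}, bounds the reaction cross term exactly as you do (yielding the $\frac{\beta}{2}(1+\gamma^2)$), and controls the convective boundary contribution $-\frac{\alpha}{\delta+2}u^{\delta+2}(1)$ by pairing it against $\|u\|_{\L^{2(\delta+1)}}^{2(\delta+1)}$ and $\|u_x\|_{\L^2}^2$. The only cosmetic difference is that the paper invokes the interpolation inequality of Lemma~\ref{lema3} ($|u(1)|^{\delta+2}\le(\delta+2)\|u\|_{\L^{2(\delta+1)}}^{\delta+1}\|u_x\|_{\L^2}$) while you rewrite $u^{\delta+2}(1)=(\delta+2)\int_0^1 u^{\delta+1}u_x\,dx$; these produce the same estimate. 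The paper also carries a splitting parameter $\theta\in(0,1)$ through \eqref{257}, but this does not change the substance.

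There is, however, a genuine omission in your proposal: you never account for the first branch $\nu>\frac{\alpha^2}{2\beta(2\delta+1)}$ of the hypothesis \eqref{2p55}, and your Lyapunov computation does not use it. In the paper this condition does not come from the stabilization estimate at all but from \emph{well-posedness}. The earlier existence result (Theorem~\ref{thm2.5}) is proved only for $\delta\in\{1,2\}$ and for the nonlinear controls \eqref{1.3}--\eqref{1p3}, so before one can speak of exponential decay for general $\delta\in[1,\infty)$ under the linear control \eqref{256}, one must first show global existence of strong solutions in that setting. The paper does this by deriving the $\H^1$ a~priori bound \eqref{262}, in which the term $-\alpha(u^{\delta}u_x,u_{xx})$ is estimated by $\frac{\theta\nu}{2}\|u_{xx}\|_{\L^2}^2+\frac{\alpha^2}{2\theta\nu}\|u^{\delta}u_x\|_{\L^2}^2$; absorbing $\|u^{\delta}u_x\|_{\L^2}^2$ into the good term $\beta(2\delta+1)\|u^{\delta}u_x\|_{\L^2}^2$ is exactly what forces $\nu>\frac{\alpha^2}{2\beta(2\delta+1)}$. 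Without this step your argument presupposes a solution whose existence has not been justified for $\delta>2$, so the proposal is incomplete as stated.
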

\begin{proof}
For $\nu>\frac{\alpha^2}{2\beta(\delta+2)^{\frac{2(\delta+1)}{\delta+2}}}$, the well-posedness of the problem \eqref{1.1}-\eqref{1.2} for  any $\delta\in[1,\infty)$ under the control law \eqref{256} is not difficult. 	Note that the control given in \eqref{256} does not cause any additional difficulty in establishing the estimate \eqref{247}. Using Lemma \ref{lema3}, one can estimate $-\frac{\alpha}{\delta+2}u^{\delta+2}(1)$ as 
	\begin{align}
		-\frac{\alpha}{\delta+2}u^{\delta+2}(1)&\leq \frac{\alpha}{\delta+2}\|u\|_{\L^{\infty}}^{\delta+2}\leq \frac{\alpha}{\delta+2}\|u_x\|_{\L^2}\|u\|_{\L^{2(\delta+1)}}^{\delta+1}\nonumber\\&\leq\frac{\theta\beta}{2}\|u\|_{\L^{2(\delta+1)}}^{2(\delta+1)}+\frac{\alpha^2}{2\theta\beta(\delta+2)^{\frac{2(\delta+1)}{\delta+2}}}\|u_x\|_{\L^2}^2,
	\end{align}
	for some $0<\theta\leq 1$.	But we know that 
	\begin{align}
		\beta(1+\gamma)(u^{\delta+1},u)\leq\beta(1+\gamma)\|u\|_{\L^{2(\delta+1)}}^{\delta+1}\|u\|_{\L^2}\leq\frac{\theta\beta}{2}\|u\|_{\L^{2(\delta+1)}}^{2(\delta+1)}+\frac{\beta(1+\gamma)^2}{2\theta}\|u\|_{\L^2}^2. 
	\end{align}
	 In fact, for the control given in \eqref{256},  the equality  \eqref{217} yields 
	\begin{align}
		\langle\mathscr{A}(u),u\rangle\geq\left(\nu-\frac{\alpha^2}{2\beta(\delta+2)^{\frac{2(\delta+1)}{\delta+2}}}\right)\|u_x\|_{\L^2}^2+\left(\nu+\frac{\mu}{2}\right)u^2(1)-\frac{\beta(1+\gamma^2)}{2}\|u\|_{\L^2}^2. 
	\end{align}
	Therefore, a calculation similar to \eqref{239} yields 
	\begin{align}\label{260}
		&\|u(t)\|_{\L^2}^2+2\left(\nu-\frac{\alpha^2}{2\theta\beta(\delta+2)^{\frac{2(\delta+1)}{\delta+2}}}\right)\int_0^t\|u_x(s)\|_{\L^2}^2ds+2\left(\nu+\frac{\mu}{2}\right)\int_0^tu^2(1,s)ds\nonumber\\&\quad+2\beta(1-\theta)\int_0^t\|u(s)\|_{\L^{2(\delta+1)}}^{2(\delta+1)}ds\nonumber\\&\leq \|u_0\|_{\L^2}^2+\frac{\beta(1+\gamma^2)}{\theta}\int_0^t\|u(s)\|_{\L^2}^2ds,
	\end{align}
	for all $t\in[0,T]$ and some $0<\theta<1$. Therefore, for $\nu>\frac{\alpha^2}{2\beta(\delta+2)^{\frac{2(\delta+1)}{\delta+2}}}$, an application of Gronwall's inequality yields 
	\begin{align}\label{39}
		&\|u(t)\|_{\L^2}^2+2\left(\nu-\frac{\alpha^2}{2\theta\beta(\delta+2)^{\frac{2(\delta+1)}{\delta+2}}}\right)\int_0^t\|u_x(s)\|_{\L^2}^2ds+2\left(\nu+\frac{\mu}{2}\right)\int_0^tu^2(1,s)ds\nonumber\\&\quad+2\beta(1-\theta)\int_0^t\|u(s)\|_{\L^{2(\delta+1)}}^{2(\delta+1)}ds\leq \|u_0\|_{\L^2}^2e^{\frac{\beta(1+\gamma^2)T}{\theta}}, 
	\end{align}
	for all $t\in[0,T]$ and some $0<\theta<1$. 
	
	Let us now establish an estimate similar to \eqref{244}. Calculations similar to \eqref{229} and \eqref{230} provide
	\begin{align*}
		-	\int_0^1u_{t}(t,x)&u_{xx}(t,x)dx=\frac{1}{2}\frac{d}{dt}\left[\|u_{x}(t)\|_{\L^2}^2+u^2(1,t)\right],
	\end{align*}
	and 
	\begin{align*}
		(\mathscr{A}(u),-u_{xx})&\geq\nu\|u_{xx}\|_{\L^2}^2-\frac{\mu}{2}u^2(1)-\alpha(u^{\delta}u_{x},u_{xx})+\beta(1+\gamma)(u^{\delta+1},u_{xx})\nonumber\\&\quad+\beta\gamma\|u_{x}\|_{\L^2}^2+\beta u^2(1)+\beta(2\delta+1)\|u^{\delta}u_{x}\|_{\L^2}^2+\beta u^{2\delta+2}(1).
	\end{align*}
	We calculate the terms $-\alpha(u^{\delta}u_{x},u_{xx})$ and $\beta(1+\gamma)(u^{\delta+1},u_{xx})$  as 
	\begin{align*}
		-\alpha(u^{\delta}u_{x},u_{xx})&\leq\alpha\|u^{\delta}u_{x}\|_{\L^2}\|u_{xx}\|_{\L^2}\leq\frac{\theta\nu}{2}\|u_{xx}\|_{\L^2}^2+\frac{\alpha^2}{2\theta\nu}\|u^{\delta}u_x\|_{\L^2}^2,\\
		\beta(1+\gamma)(u^{\delta+1},u_{xx})&\leq \beta(1+\gamma)\|u\|_{\L^{2(\delta+1)}}^{\delta+1}\|u_{xx}\|_{\L^2}\leq\frac{\theta\nu}{2}\|u_{xx}\|_{\L^2}^2+\frac{\beta^2(1+\gamma)^2}{2\theta\nu}\|u\|_{\L^{2(\delta+1)}}^{2(\delta+1)},
	\end{align*}
	for some $0<\theta\leq 1$. Substituting the above estimates in \eqref{241}, we deduce 
	\begin{align}\label{262}
		&\|u_{x}(t)\|_{\L^2}^2+u^2(1,t)+2(1-\theta)\nu\int_0^t\|u_{xx}(s)\|_{\L^2}^2ds+2\beta\int_0^tu^{2(\delta+1)}(1,s)ds\nonumber\\&\quad+2\left(\beta(2\delta+1)-\frac{\alpha^2}{2\theta\nu}\right)\int_0^t\|u^{\delta}(s)u_{x}(s)\|_{\L^2}^2ds\nonumber\\&\leq\|u_{0,x}\|_{\L^2}^2+u^2_0(1)+\mu\int_0^tu^2(1,s)ds+\frac{\beta^2(1+\gamma)^2}{\theta\nu}\int_0^t\|u(s)\|_{\L^{2(\delta+1)}}^{2(\delta+1)}ds,
	\end{align}
	for all $t\in[0,T]$ and some $0<\theta<1$. For $\nu>\frac{\alpha^2}{2\beta(2\delta+1)}$, an application of Gronwall's and Agmon's inequalities in \eqref{262} yields 
	\begin{align}
		\|u_{x}(t)\|_{\L^2}^2\leq\left\{\|u_{0,x}\|_{\L^2}^2+2\|u_0\|_{\L^2}\|u_{0,x}\|_{\L^2}+\frac{\beta^2(1+\gamma)^2}{\theta\nu}\int_0^T\|u(s)\|_{\L^{2(\delta+1)}}^{2(\delta+1)}ds,\right\}e^{\mu T},
	\end{align}
	for all $t\in[0,T]$. Therefore an application of \eqref{39} and Agmon's inequality imply \eqref{247} holds for all $\delta\in[1,\infty)$ and the proof of existence is completed. The uniqueness of strong solutions follow immediately from the estimate \eqref{238} as $u\in\L^{\infty}((0,T)\times(0,1))$. 
	
	Finally, for the control law given in \eqref{256}, we infer from \eqref{eq1} under the condition \eqref{2p55} that 
	\begin{align}\label{257}
		\dot{\mathcal{V}}(t)&\leq-2\left(\nu-\frac{\beta}{2\theta}(1+\gamma^2)-\frac{\alpha^2}{2\theta\beta(\delta+2)^{\frac{2(\delta+1)}{\delta+2}}}\right)\mathcal{V}(t),
	\end{align}
	for all $t>0$ and some $0<\theta<1$. Therefore, one can obtain the exponential stabilization in $\L^2(0,1)$ for any $\delta\in[1,\infty)$. 
\end{proof}

Under the condition \eqref{2p55}, let us establish the $\H^1$-stabilization of generalized KdV-Burgers-Huxley equation with the control law \eqref{256}.

\begin{theorem}\label{thm3.8}
Under the condition \eqref{2p55}, 	global exponential stability in the $\H^1$-norm 	sense holds for the generalized KdV-Burgers-Huxley equation \eqref{1.1}-\eqref{1.2} under the control law \eqref{256}, that is, 
	\begin{align}\label{3p17}
		\|u(t)\|_{\H^1}^2\leq 	2\|u_x(t)\|_{\L^2}^2\leq 2\left\{\|u_{0,x}\|_{\L^2}^2+u_0^2(1)+\left[\frac{1}{\varrho}(\varrho+2(\mu+1))+\frac{\beta(1+\gamma)^2}{2\theta(1-\theta)\nu}\right]\|u_0\|_{\L^2}^2\right\}e^{-\frac{\varrho}{2} t},
	\end{align}
	for all $t\geq 0$ and $u_0\in\H^1(0,1)$, where 
	\begin{align}\label{3p18}
		\varrho=\left(\nu-\frac{\alpha^2}{2\theta\beta(\delta+2)^{\frac{2(\delta+1)}{\delta+2}}}-\frac{\beta(1+\gamma^2)}{2\theta}\right)>0,
	\end{align}
	for some $0<\theta<1$. 
\end{theorem}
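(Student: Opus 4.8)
The first inequality in \eqref{3p17} is just the Poincar\'e inequality of Lemma \ref{lem21}: since $u(0,t)=0$ we have $\|u(t)\|_{\L^2}\leq\|u_x(t)\|_{\L^2}$, so $\|u(t)\|_{\H^1}^2=\|u(t)\|_{\L^2}^2+\|u_x(t)\|_{\L^2}^2\leq2\|u_x(t)\|_{\L^2}^2$. All the work is therefore in the exponential decay of $\|u_x(t)\|_{\L^2}^2$, which I would extract from an $\H^1$-energy identity. The plan is to use the Lyapunov functional $\Phi(t):=\|u_x(t)\|_{\L^2}^2+u^2(1,t)$. Differentiating and integrating by parts exactly as in the computation \eqref{229}, and using that $u(0,t)=0$ forces $u_t(0,t)=0$ while the control \eqref{256} gives $u_x(1,t)=-u(1,t)$, one finds $\tfrac12\dot\Phi(t)=-\langle\mathscr{A}(u),-u_{xx}\rangle$.

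Next I would reuse, almost verbatim, the $\H^1$-computation already carried out in the proof of Theorem \ref{thm2.4} (the lower bound for $\langle\mathscr{A}(u),-u_{xx}\rangle$ together with the Young estimates for $-\alpha(u^\delta u_x,u_{xx})$ and $\beta(1+\gamma)(u^{\delta+1},u_{xx})$, leading to \eqref{262}). The crucial structural point is that the bare $\H^1$-energy carries \emph{no} dissipation for the boundary term $u^2(1)$ nor for the supercritical term $\|u\|_{\L^{2(\delta+1)}}^{2(\delta+1)}$, so I would couple it with a controlled multiple of the $\L^2$-energy inequality \eqref{260}, which supplies exactly the missing dissipation $2(\nu+\tfrac\mu2)u^2(1)$ and $2\beta(1-\theta)\|u\|_{\L^{2(\delta+1)}}^{2(\delta+1)}$. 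The first condition in \eqref{2p55}, namely $\nu>\tfrac{\alpha^2}{2\beta(2\delta+1)}$, keeps the coefficient of $\|u^\delta u_x\|_{\L^2}^2$ nonnegative for admissible $\theta$, so that term and the nonnegative boundary term $\beta u^{2\delta+2}(1)$ may be discarded; and the second condition in \eqref{2p55}, which is precisely the combination defining $\varrho$ in \eqref{3p18}, is used together with $\|u\|_{\L^2}\leq\|u_x\|_{\L^2}$ to turn the surviving $\|u_x\|_{\L^2}^2$-terms into a clean dissipation rate. This should produce a differential inequality of the form
\begin{align*}
	\dot\Phi(t)+\frac{\varrho}{2}\Phi(t)\leq 2(\mu+1)u^2(1,t)+C\|u(t)\|_{\L^{2(\delta+1)}}^{2(\delta+1)}=:G(t),
\end{align*}
in which the right-hand side collects only the residual boundary and reaction contributions.

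The remaining step is to integrate this inequality and to show that the forcing $G$ contributes just a constant, times the decaying factor. Here I would lean on the $\L^2$-analysis already done for Theorem \ref{thm2.4}: the estimate \eqref{260} controls $\int_0^t u^2(1,s)\,ds$ and $\int_0^t\|u(s)\|_{\L^{2(\delta+1)}}^{2(\delta+1)}\,ds$ by $\|u_0\|_{\L^2}^2$, the $\L^2$-stabilization gives $\|u(t)\|_{\L^2}^2\leq\|u_0\|_{\L^2}^2 e^{-2\varrho t}$, and \eqref{247} gives a uniform $\L^\infty$-bound; Agmon's inequality (Lemma \ref{lem21}) then yields $u^2(1,s)\lesssim e^{-\varrho s}$ and $\|u(s)\|_{\L^{2(\delta+1)}}^{2(\delta+1)}\lesssim e^{-2\varrho s}$. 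Multiplying the differential inequality by the integrating factor $e^{\varrho t/2}$ and integrating, the fact that $\varrho/2<\varrho$ makes $\int_0^\infty e^{\varrho s/2}G(s)\,ds$ converge, and bookkeeping the resulting constants reproduces the bracket $\tfrac1\varrho(\varrho+2(\mu+1))+\tfrac{\beta(1+\gamma)^2}{2\theta(1-\theta)\nu}$ multiplying $\|u_0\|_{\L^2}^2$, together with $\|u_{0,x}\|_{\L^2}^2+u_0^2(1)=\Phi(0)$.

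The main obstacle is the boundary term $u^2(1,t)$. It sits inside the functional $\Phi$, it enters the forcing with an a priori unfavourable sign (whenever $\mu$ is large relative to $\beta$), and, crucially, it decays only like $e^{-\varrho s}$ rather than like the $\L^2$-energy $\|u\|_{\L^2}^2\sim e^{-2\varrho s}$, because controlling it by $\|u\|_{\L^2}\|u_x\|_{\L^2}$ costs one factor of $\|u_x\|_{\L^2}$ that is only known to be bounded (one must first integrate $\dot\Phi\leq G$ to obtain the uniform bound on $\|u_x(t)\|_{\L^2}$ before extracting its decay). This is exactly why the achievable $\H^1$-rate is capped at $\varrho/2$: the integrating factor must use an exponent strictly below $\varrho$ for the forcing integral to converge. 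Everything else---the conversion of $\|u_{xx}\|_{\L^2}^2$ into usable $\|u_x\|_{\L^2}^2$-dissipation under the Neumann-type control, and the dependence of the constants on $\theta$---is routine once the borrowing of dissipation and decay from the $\L^2$-estimates of Theorem \ref{thm2.4} is organized correctly.
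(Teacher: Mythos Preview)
Your overall architecture is right: the functional $\Phi(t)=\|u_x(t)\|_{\L^2}^2+u^2(1,t)$, the $\H^1$-identity leading to \eqref{262}, and the integrating factor $e^{\varrho t/2}$ are exactly what the paper uses. The difference lies in how the forcing integral is controlled, and here the paper takes a cleaner route than your pointwise-decay proposal.

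The paper does \emph{not} couple the $\L^2$- and $\H^1$-inequalities at the differential level to manufacture a $\tfrac{\varrho}{2}\Phi$ dissipation term (indeed, your stated inequality $\dot\Phi+\tfrac{\varrho}{2}\Phi\le G$ for $\Phi$ alone does not follow from that coupling without enlarging the Lyapunov functional to $\Phi+\lambda\|u\|_{\L^2}^2$). Instead the paper simply drops the good terms in \eqref{262} to get $\dot\Psi\le \mu u^2(1)+\tfrac{\beta^2(1+\gamma)^2}{\theta\nu}\|u\|_{\L^{2(\delta+1)}}^{2(\delta+1)}$, multiplies by $e^{\varrho t/2}$, and treats \emph{everything} on the right---including the $\tfrac{\varrho}{2}\Psi$ arising from the product rule---as forcing. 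After the bound $u^2(1)\le2\|u_x\|_{\L^2}^2$, the integrated right-hand side involves only $\int_0^t e^{\varrho s/2}\|u_x(s)\|_{\L^2}^2\,ds$ and $\int_0^t e^{\varrho s/2}\|u(s)\|_{\L^{2(\delta+1)}}^{2(\delta+1)}\,ds$. The key observation is that the $\L^2$-energy computation, rewritten with an $e^{\varrho s}$ weight (this is \eqref{265}), gives directly
\[
\varrho\int_0^t e^{\varrho s}\|u_x(s)\|_{\L^2}^2\,ds+2\beta(1-\theta)\int_0^t e^{\varrho s}\|u(s)\|_{\L^{2(\delta+1)}}^{2(\delta+1)}\,ds\le\|u_0\|_{\L^2}^2,
\]
and since $e^{\varrho s/2}\le e^{\varrho s}$ for $s\ge0$ these bounds feed straight into the forcing integral. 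This is what produces the explicit bracket $\tfrac{1}{\varrho}(\varrho+2(\mu+1))+\tfrac{\beta(1+\gamma)^2}{2\theta(1-\theta)\nu}$ multiplying $\|u_0\|_{\L^2}^2$.

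Your route through Agmon and the uniform $\L^\infty$/$\H^1$ bound \eqref{247} would also close, but the resulting constant in front of $\|u_0\|_{\L^2}^2$ would pick up the uniform bound $\sup_t\|u_x(t)\|_{\L^2}$, which itself depends on $\|u_0\|_{\H^1}$ in a way that does not collapse to the stated bracket. So the paper's weighted-integral trick is both shorter (no need to first establish a uniform $\H^1$ bound and then re-enter the argument) and sharper in the constants; your approach trades those advantages for the conceptual comfort of pointwise decay, which is not needed here.
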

\begin{proof}
	A calculation similar to \eqref{262} yields 
	\begin{align*}
		&\frac{d}{dt}\Psi(t)+2(1-\theta)\nu\|u_{xx}(t)\|_{\L^2}^2+2\beta u^{2(\delta+1)}(1,t)+2\left(\beta(2\delta+1)-\frac{\alpha^2}{2\theta\nu}\right)\|u^{\delta}(t)u_{x}(t)\|_{\L^2}^2\nonumber\\&\leq \mu u^2(1,t)+\frac{\beta^2(1+\gamma)^2}{\theta\nu}\|u(t)\|_{\L^{2(\delta+1)}}^{2(\delta+1)},
	\end{align*}
	where $\Psi(t)=\|u_{x}(t)\|_{\L^2}^2+u^2(1,t)$ for a.e. $t\in[0,T]$. Let us multiply the above equation by $e^{\frac{\varrho}{2} t}$ to deduce 
	\begin{align*}
		&\frac{d}{dt}[e^{\frac{\varrho}{2} t}\Psi(t)]+2(1-\theta)\nu e^{\frac{\varrho}{2} t}\|u_{xx}(t)\|_{\L^2}^2+2\beta e^{\frac{\varrho}{2} t} u^{2(\delta+1)}(1,t)\nonumber\\&\quad+2\left(\beta(2\delta+1)-\frac{\alpha^2}{2\theta\nu}\right)e^{\frac{\varrho}{2} t}\|u^{\delta}(t)u_{x}(t)\|_{\L^2}^2\nonumber\\&\leq\frac{\varrho}{2}e^{\frac{\varrho}{2} t}\Psi(t)+ \mu e^{\frac{\varrho}{2} t} u^2(1,t)+\frac{\beta^2(1+\gamma)^2}{\theta\nu}e^{\frac{\varrho}{2} t}\|u(t)\|_{\L^{2(\delta+1)}}^{2(\delta+1)}\nonumber\\&\leq e^{\frac{\varrho}{2} t}\left[(\varrho+2(\mu+1))\|u_{x}(t)\|_{\L^2}^2+\frac{\beta^2(1+\gamma)^2}{\theta\nu}\|u(t)\|_{\L^{2(\delta+1)}}^{2(\delta+1)}\right],
	\end{align*}
	where $\varrho$ is defined in \eqref{3p18}, and we have used the fact that $u^2(1)\leq\|u\|_{\L^{\infty}}^2\leq 2\|u\|_{\L^2}\|u_x\|_{\L^2}\leq 2\|u_x\|_{\L^2}^2$. Integrating the above inequality from $0$ to $t$, we find 
	\begin{align}\label{264}
		e^{\frac{\varrho}{2} t}\Psi(t)\leq\Psi(0)+\int_0^te^{\frac{\varrho}{2} s}\left[(\varrho+2(\mu+1))\|u_{x}(s)\|_{\L^2}^2+\frac{\beta^2(1+\gamma)^2}{\theta\nu}\|u(s)\|_{\L^{2(\delta+1)}}^{2(\delta+1)}\right]ds,
	\end{align}
	for all $t\geq 0$. 
	
	A calculation similar to \eqref{260} yields 
	\begin{align*}
		&\frac{d}{dt}\|u(t)\|_{\L^2}^2+2\left(\nu-\frac{\alpha^2}{2\theta\beta(\delta+2)^{\frac{2(\delta+1)}{\delta+2}}}-\frac{\beta(1+\gamma^2)}{2\theta}\right)\|u_x(t)\|_{\L^2}^2+2\beta(1-\theta)\|u(t)\|_{\L^{2(\delta+1)}}^{2(\delta+1)}\leq 0,
	\end{align*}
	for a.e. $t\in[0,T]$. Therefore, by an application of the Poincar\'e inequality and variation of constants formula in the above inequality implies, for $\varrho=\left(\nu-\frac{\alpha^2}{2\theta\beta(\delta+2)^{\frac{2(\delta+1)}{\delta+2}}}-\frac{\beta(1+\gamma^2)}{2\theta}\right)>0$,
	\begin{align}\label{265}
		\|u(t)\|_{\L^2}^2+\varrho\int_0^te^{\varrho s}\|u_x(s)\|_{\L^2}^2ds+2\beta(1-\theta)\int_0^te^{\varrho s}\|u(s)\|_{\L^{2(\delta+1)}}^{2(\delta+1)}ds\leq\|u_0\|_{\L^2}^2, 
	\end{align}
	for all $t\geq 0$. Substituting \eqref{265} in \eqref{264}, we obtain our required result as
	\begin{align}
		\Psi(t)\leq\left\{\Psi(0)+\left[\frac{1}{\varrho}(\varrho+2(\mu+1))+\frac{\beta(1+\gamma)^2}{2\theta(1-\theta)\nu}\right]\|u_0\|_{\L^2}^2\right\}e^{-\frac{\varrho}{2} t},
	\end{align}
	for some $0<\theta<1$. Thus the exponential stabilization result in $\H^1(0,1)$ (see \eqref{3p17})  follows immediately. 
\end{proof}

Finally, an application of Agmon’s inequality (Lemma \ref{lem21}) yields the following result on the pointwise exponential convergence:
\begin{theorem}\label{thm3.10}
	Consider the generalized KdV-Burgers-Huxley equation \eqref{1.1}-\eqref{1.2} under the control law \eqref{256}.  If $u_0\in\H^1(0,1)$ and $\nu$ satisfies the condition \eqref{2p55}, then we have 
	\begin{align}
		\max_{x\in[0,1]}|u(x,t)|\leq 4\left\{\|u_{0,x}\|_{\L^2}^2+u_0^2(1)+\left[\frac{1}{\varrho}(\varrho+2(\mu+1))+\frac{\beta(1+\gamma)^2}{2\theta(1-\theta)\nu}\right]\|u_0\|_{\L^2}^2\right\}e^{-\frac{\varrho}{2} t},
	\end{align}
	for all $t\geq 0$,	where $\varrho$ is defined in \eqref{3p18}. 
\end{theorem}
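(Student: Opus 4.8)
The plan is to read this statement off as an immediate corollary of the $\H^1$-stabilization already established in Theorem \ref{thm3.8}, using nothing more than the one-dimensional Agmon inequality (Lemma \ref{lem21}). Since the control law \eqref{256} enforces $u(0,t)=0$, for every fixed $t\ge 0$ the function $x\mapsto u(x,t)$ vanishes at the left endpoint, so writing $u^2(x,t)=2\int_0^x u(y,t)u_y(y,t)\d y$ and applying the Cauchy--Schwarz inequality yields the Agmon-type bound $\max_{x\in[0,1]}u^2(x,t)\le 2\|u(t)\|_{\L^2}\|u_x(t)\|_{\L^2}$, exactly in the form used earlier in \eqref{2a42} and \eqref{247}.

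First I would replace $\|u(t)\|_{\L^2}$ by $\|u_x(t)\|_{\L^2}$ via the Poincar\'e inequality (Lemma \ref{lem21}), which is available precisely because $u(0,t)=0$; this gives $\max_{x\in[0,1]}u^2(x,t)\le 2\|u_x(t)\|_{\L^2}^2$. (Alternatively one may keep $\|u(t)\|_{\L^2}\le\|u_0\|_{\L^2}$ from \eqref{265}.) Then I would invoke \eqref{3p17} of Theorem \ref{thm3.8}, which under the standing hypothesis \eqref{2p55} provides $2\|u_x(t)\|_{\L^2}^2\le 2\left\{\|u_{0,x}\|_{\L^2}^2+u_0^2(1)+\left[\frac{1}{\varrho}(\varrho+2(\mu+1))+\frac{\beta(1+\gamma)^2}{2\theta(1-\theta)\nu}\right]\|u_0\|_{\L^2}^2\right\}e^{-\frac{\varrho}{2}t}$ with $\varrho$ as in \eqref{3p18}. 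Combining the two displays produces the claimed pointwise estimate, the exponential rate $\varrho/2$ being inherited directly from the $\H^1$-decay and the numerical constant being absorbed into the factor in front.

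The hard part, honestly, is absent: all the genuine work --- the energy identity for $\|u_x(t)\|_{\L^2}^2$, the use of the restrictive condition \eqref{2p55} to dominate the convective contribution by the diffusion and reaction terms, and the variation-of-constants argument giving the $e^{-\frac{\varrho}{2}t}$ rate --- was already carried out in Theorem \ref{thm3.8}. The only points needing a little care are (i) verifying that the Agmon and Poincar\'e constants on $[0,1]$ are admissible for functions vanishing at $x=0$, and (ii) confirming that \eqref{3p17} holds for all $t\ge 0$ rather than on a finite horizon, which it does since the constants there are time-uniform. No new analytic difficulty arises, which is exactly why this pointwise convergence is placed as the final, immediate consequence of the stabilization analysis.
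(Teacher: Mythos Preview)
Your approach is correct and is exactly what the paper does: its entire proof is the one-line remark that an application of Agmon's inequality (Lemma~\ref{lem21}) yields the result, i.e., precisely your combination of Agmon, Poincar\'e (available since $u(0,t)=0$), and the $\H^1$-estimate \eqref{3p17} from Theorem~\ref{thm3.8}. Your chain in fact delivers $\max_{x\in[0,1]}|u(x,t)|^2\le 2\{\cdots\}e^{-\varrho t/2}$, so the constant $4$ and the missing square on the left in the paper's stated bound appear to be a harmless typographical slip rather than anything your argument overlooks.
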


%\subsection{Adaptive control} In this subsection, we consider the adaptive control problem of the following  damped gneralized Burgers-Huxley equation subject to \eqref{2} (that is, when $\nu>0$ and $\mu>0$ are unknown):
%\begin{proof}
%	A calculation similar to \eqref{eq1}  yields 
%	\begin{align}
%		\dot{V}(t)&=-\nu\int_0^1u_x^2(x,t)dx+\nu u(1,t)u_x(1,t)-\mu u(1,t)u_{xx}(1,t)+\frac{\mu}{2} u_x^2(1,t)-\frac{\mu}{2} u_x^2(0,t)\nonumber\\&\quad-\frac{\alpha}{\delta+2}u^{\delta+2}(1,t)+\beta(1+\gamma)\int_0^1u^{\delta+2}(x,t)dx-\beta\gamma\int_0^1u^2(x,t)dx-\beta\int_0^1u^{2(\delta+1)}(x,t)dx
%	\end{align}
%\end{proof}

\section{Numerical Results}\label{sec4}\setcounter{equation}{0}  
In our numerical studies of the third-order differential system \eqref{1.1}-\eqref{1.2}, we  implement a modified Chebyshev Collocation Method given by Kosloff and Tal-Ezer \cite{KTAL} to ensure stability in time steps. This method derives modified grids, which are then transformed to fit within our desired $[0,1]$ interval. Consequently, we compute the corresponding modified Chebyshev differentiation matrices, following the approach discussed in \cite{RRY}. For spatial discretization, these matrices are applied over the transformed Chebyshev nodes, and we utilize the backward Euler method for time discretization.  A MATLAB program is developed in accordance with these methodologies and effectively solved both uncontrolled and controlled versions of the Generalized Korteweg-de Vries-Burgers-Huxley (GKdVBH) equation \eqref{1.1}-\eqref{1.2} and controls outlined in \eqref{1.3}, \eqref{1p3}, \eqref{256}  and Remark \ref{rem1.1}. Over transformed nodes, we apply the Gauss-Lobatto quadrature for numerical integration, and thus, the $L^{2}$-norm of the solution $u(\cdot,t)$ at each time step is plotted against time $t$.

\begin{figure}[h]
	\centering
	% First image
	\begin{minipage}{0.68\textwidth}
		\centering
		\includegraphics[width=\linewidth]{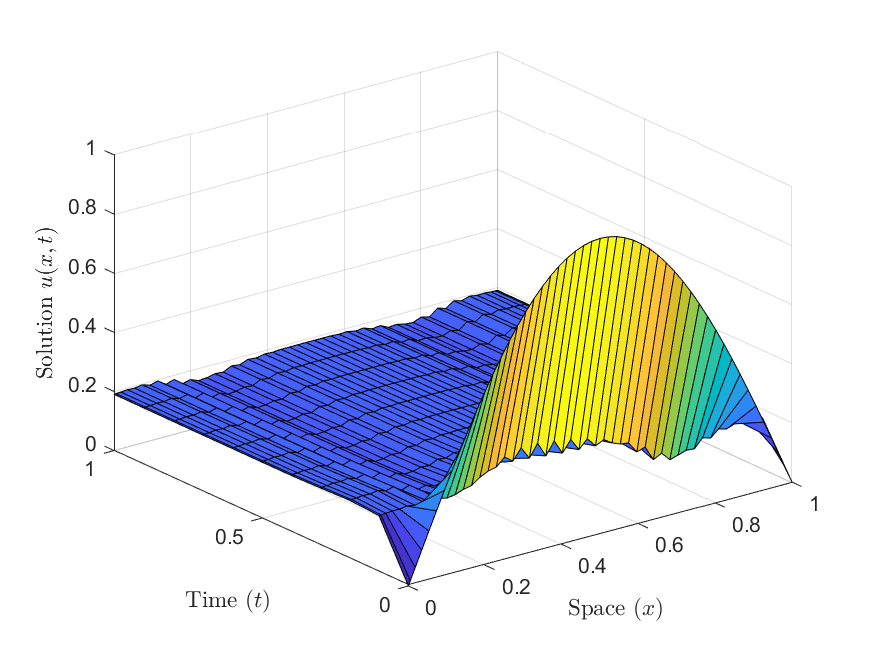}
	\end{minipage}\hfill
	% Second image
	\begin{minipage}{0.28\textwidth}
		\centering
		\includegraphics[width=\linewidth]{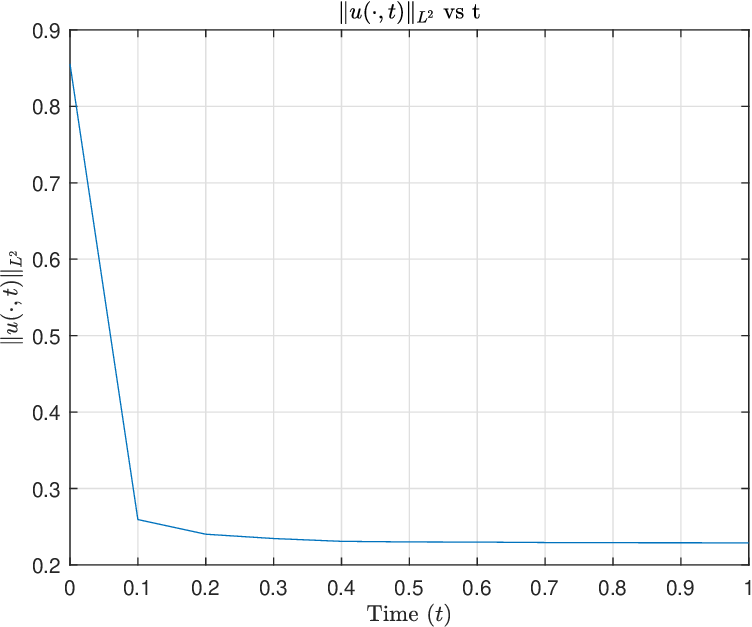} 
	\end{minipage}
	\caption{Time evolution of uncontrolled GKdVBH equation for $\nu = 1, \alpha=1,\beta=1,\delta=1,\gamma=0.5$  and  $u_{0}(x) = \sin(\pi x)$.}
	\label{fig1}
\end{figure}
\begin{figure}[h]
	\centering
	% First image
	\begin{minipage}{0.68\textwidth}
		\centering
		\includegraphics[width=\linewidth]{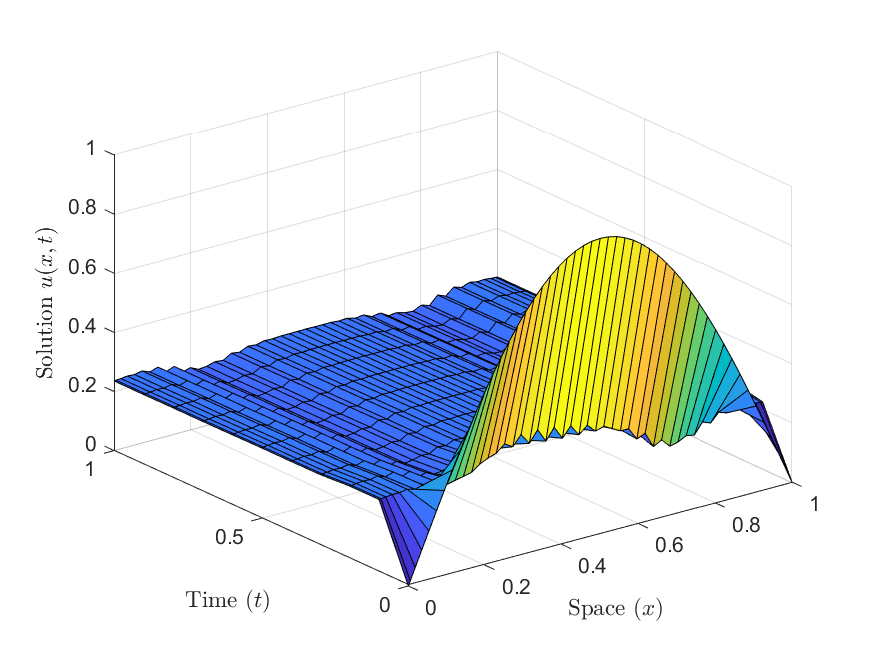}
	\end{minipage}\hfill
	% Second image
	\begin{minipage}{0.28\textwidth}
		\centering
		\includegraphics[width=\linewidth]{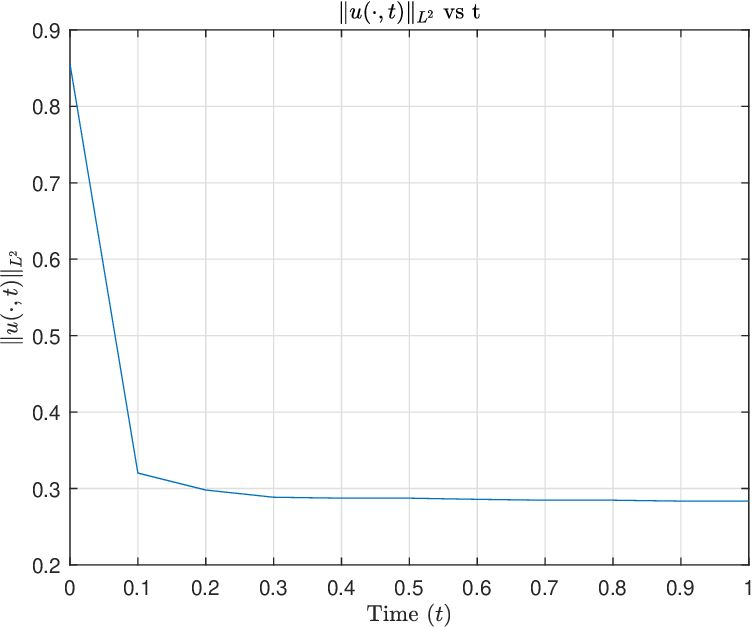} 
	\end{minipage}
	\caption{Time evolution of uncontrolled GKdVBH equation for $\nu = 1, \alpha=1,\beta=1,\delta=2,\gamma=0.5$  and  $u_{0}(x) = \sin(\pi x)$.}
	\label{fig2}
\end{figure}
\begin{figure}[h]
	\centering
	% First image
	\begin{minipage}{0.68\textwidth}
		\centering
		\includegraphics[width=\linewidth]{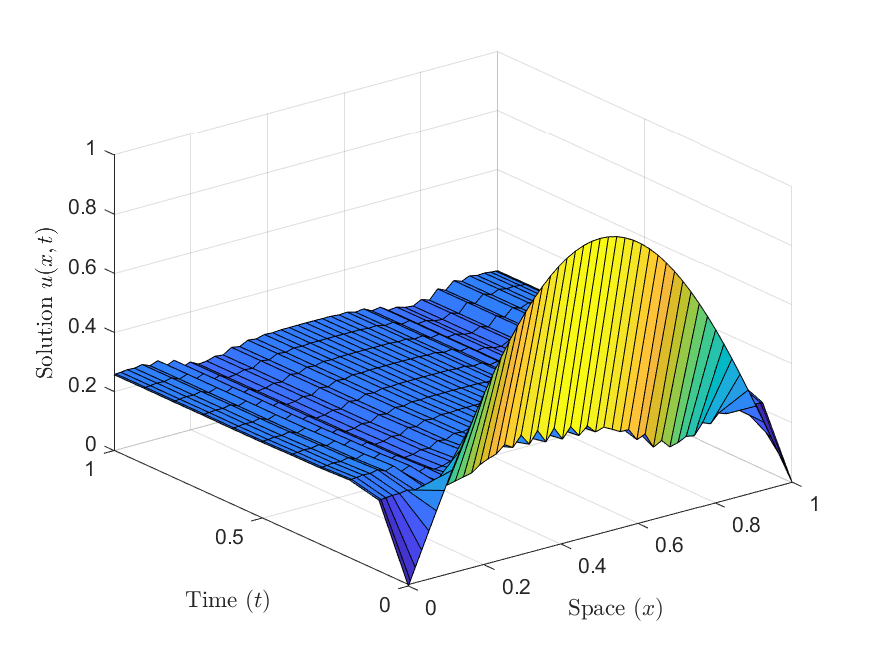}
	\end{minipage}\hfill
	% Second image
	\begin{minipage}{0.28\textwidth}
		\centering
		\includegraphics[width=\linewidth]{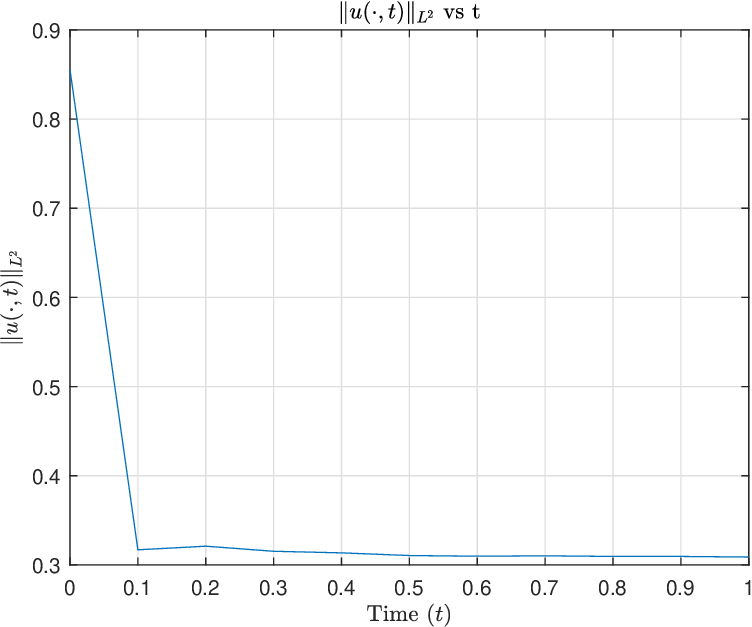} 
	\end{minipage}
	\caption{Time evolution of uncontrolled GKdVBH equation for $\nu = 1, \alpha=1,\beta=1,\delta=3,\gamma=0.5$  and  $u_{0}(x) = \sin(\pi x)$.}
	\label{fig3}
\end{figure}

Figure \ref{fig1}, \ref{fig2} and \ref{fig3}  depicts the temporal evolution of the solution \( u(x,t) \) in the absence of control, characterized by the parameters \(\alpha=1\), \(\beta=1\), \(\gamma=0.5\), \(\nu = 1 \), \(\mu =0.1 \), and the initial condition \( u(x,0) = \sin(\pi x) \) for  \(\delta = 1\),  \(\delta = 2\) and  \(\delta = 3\) respectively. When the control delineated in \eqref{1.3} (\(\delta = 1\)) is implemented with a parameter \(\eta =1\), figure \ref{fig4} elucidates that the solution \( u(x,t) \) converges towards the zero state. Similarly, when a control law \eqref{1p3} (\(\delta = 2\)) is applied with \(\eta =1\), figure \ref{fig5} demonstrates that \( u(x,t) \) approaches the desired quiescent state. These observations validate Theorem \ref{thm2.3}.

\begin{figure}[h]
	\centering
	% First image
	\begin{minipage}{0.68\textwidth}
		\centering
		\includegraphics[width=\linewidth]{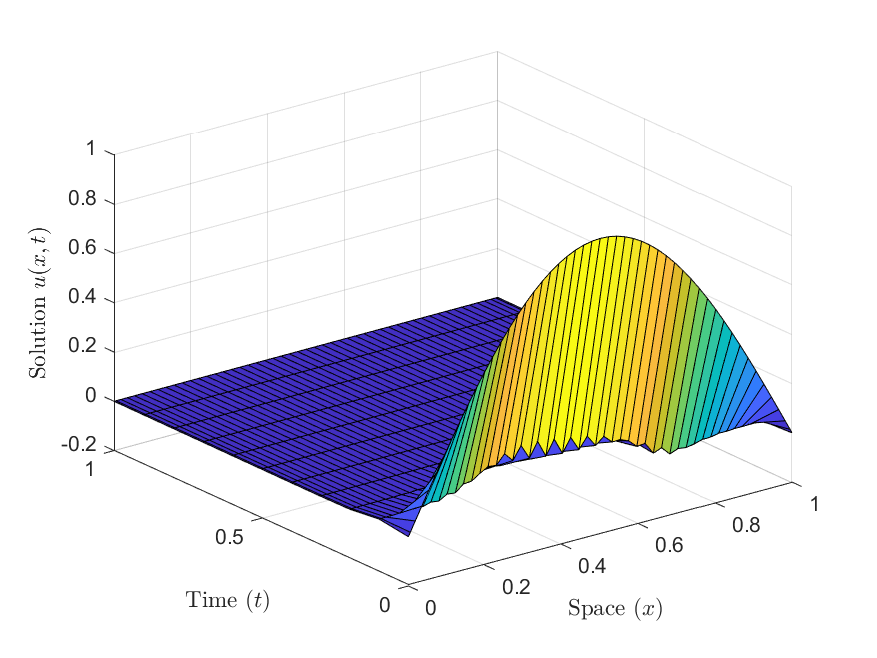}
	\end{minipage}\hfill
	% Second image
	\begin{minipage}{0.28\textwidth}
		\centering
		\includegraphics[width=\linewidth]{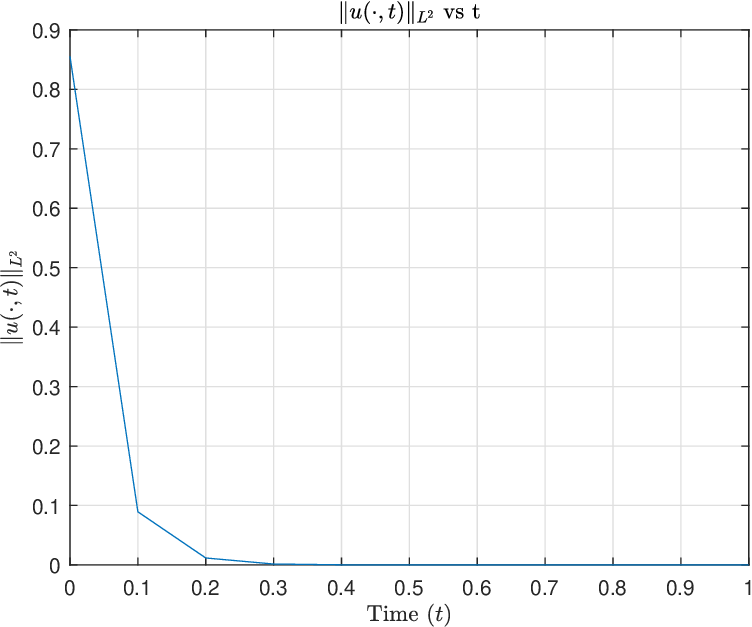} 
	\end{minipage}
	\caption{Time evolution of controlled GKdVBH equation for $\nu = 1, \alpha=1,\beta=1,\delta=1,\gamma=0.5,\eta = 1, \mu = 0.1 $ and $u_{0}(x) = \sin(\pi x)$.}
	\label{fig4}
\end{figure}

\begin{figure}[h]
	\centering
	% First image
	\begin{minipage}{0.68\textwidth}
		\centering
		\includegraphics[width=\linewidth]{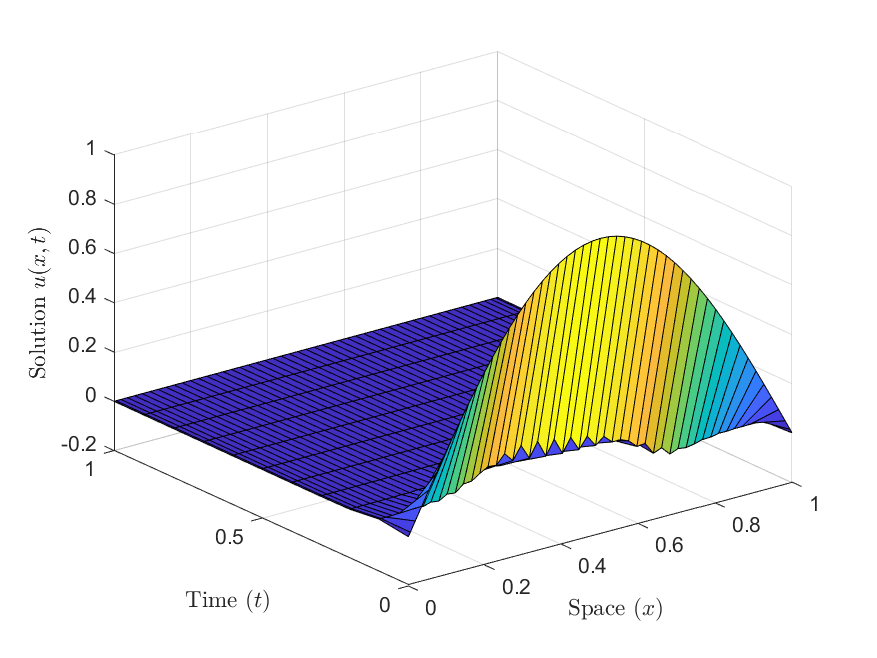}
	\end{minipage}\hfill
	% Second image
	\begin{minipage}{0.28\textwidth}
		\centering
		\includegraphics[width=\linewidth]{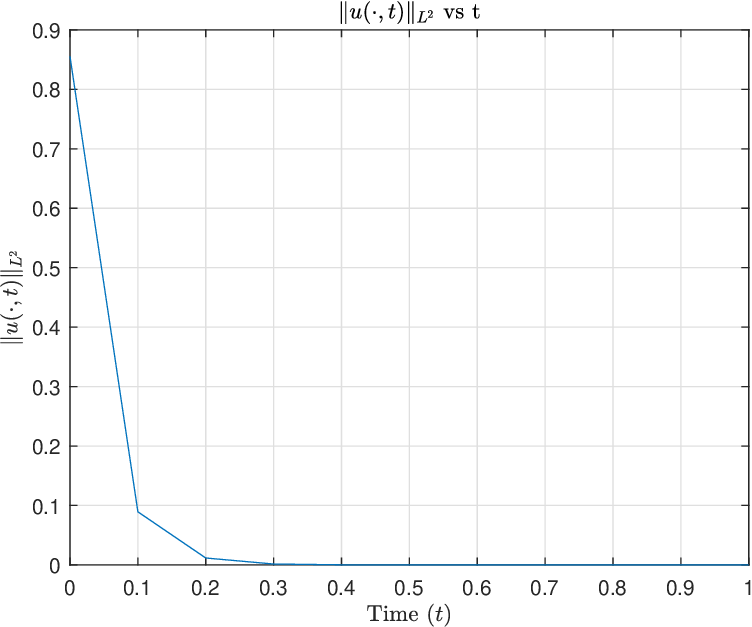} 
	\end{minipage}
	\caption{Time evolution of uncontrolled GKdVBH equation for $\nu = 1, \alpha=1,\beta=1,\delta=2,\gamma=0.5,\eta = 1, \mu = 0.1 $ and $u_{0}(x) = \sin(\pi x)$.}
	\label{fig5}
\end{figure}

Figures \ref{fig6} and \ref{fig7} describe the asymptotic stability achieved by implementing the controls as outlined in Remark \ref{rem1.1}. Specifically, along with condition \eqref{17}, defined by the function $g(\cdot)$ are $\frac{1}{\mu}\left(\eta+\frac{\alpha^2}{\eta(\delta+2)^2}u^{2\delta}(1,t)\right)u(1,t) \ (\text{for }\ \delta=1) \ \text{ and }\ \frac{\eta}{\mu}u(1,t) \ (\text{for }\ \delta=2)$ respectively.

Figure \ref{fig8} validates Theorem \ref{thm2.4}, which shows exponential stability of the control defined in \eqref{256}  $(\text{for }\ \delta=3)$.

Moreover, in studying the convergence rates for controls, the logarithms of the norms plotted over time offer enhanced visual clarity. Linear segments in these plots are indicative of exponential decay, with the slope of these segments directly corresponding to the rate of decay. A less steep slope signifies a slower rate of decay. Note that we have taken $\nu = 1$ so that condition \eqref{2p55} can be met along with the condition in Theorem \ref{thm2.3}, thus, facilitating proper comparison. Figure \ref{fig9} and  \ref{fig10} conclusively demonstrates that the convergence rates for control \eqref{17} is notably slower than controls \eqref{256} and \eqref{1.3} $\text{for }\ \delta=1 \ \text{and }\ \delta=2,$ respectively, corroborating the observations made in Remark \ref{rem1.1}. Figure \ref{fig11} shows that $\max\limits_{x \in [0,1]} |u(x,t)|$ decays to zero, as in accordance with Theorem \ref{thm3.10}.
\begin{figure}[h]
	\centering
	% First image
	\begin{minipage}{0.68\textwidth}
		\centering
		\includegraphics[width=\linewidth]{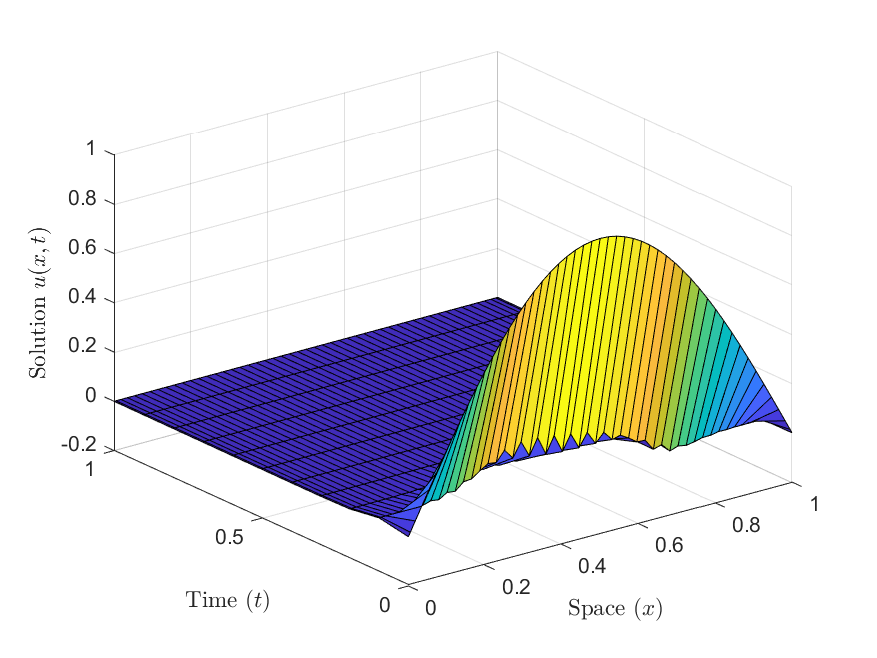}
	\end{minipage}\hfill
	% Second image
	\begin{minipage}{0.28\textwidth}
		\centering
		\includegraphics[width=\linewidth]{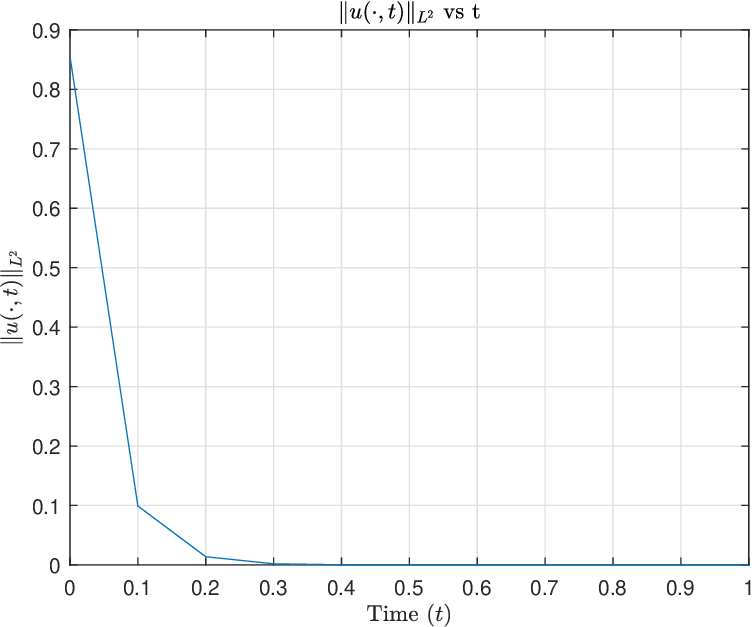} 
	\end{minipage}
	\caption{Time evolution of GKdVBH equation under control defined in \eqref{17} for $\nu = 1, \alpha=1,\beta=1,\delta=1,\gamma=0.5,\eta = 1, \mu = 0.1 $ and $u_{0}(x) = \sin(\pi x)$.}
	\label{fig6}
\end{figure}
\begin{figure}[h]
	\centering
	% First image
	\begin{minipage}{0.68\textwidth}
		\centering
		\includegraphics[width=\linewidth]{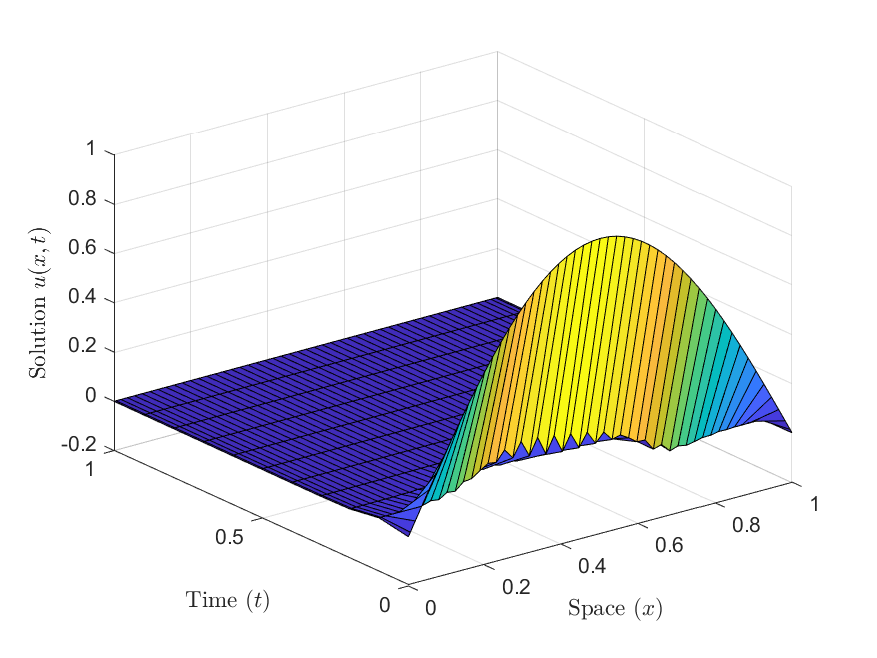}
	\end{minipage}\hfill
	% Second image
	\begin{minipage}{0.28\textwidth}
		\centering
		\includegraphics[width=\linewidth]{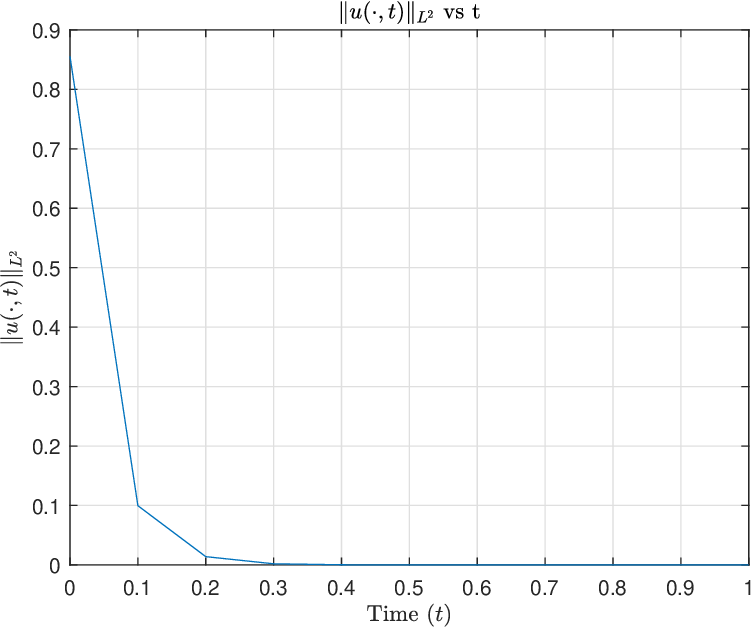} 
	\end{minipage}
	\caption{Time evolution of GKdVBH equation under control defined in \eqref{17} for $\nu = 1, \alpha=1,\beta=1,\delta=2,\gamma=0.5,\eta = 1, \mu = 0.1 $ and $u_{0}(x) = \sin(\pi x)$.}
	\label{fig7}
\end{figure}
\begin{figure}[h]
	\centering
	% First image
	\begin{minipage}{0.68\textwidth}
		\centering
		\includegraphics[width=\linewidth]{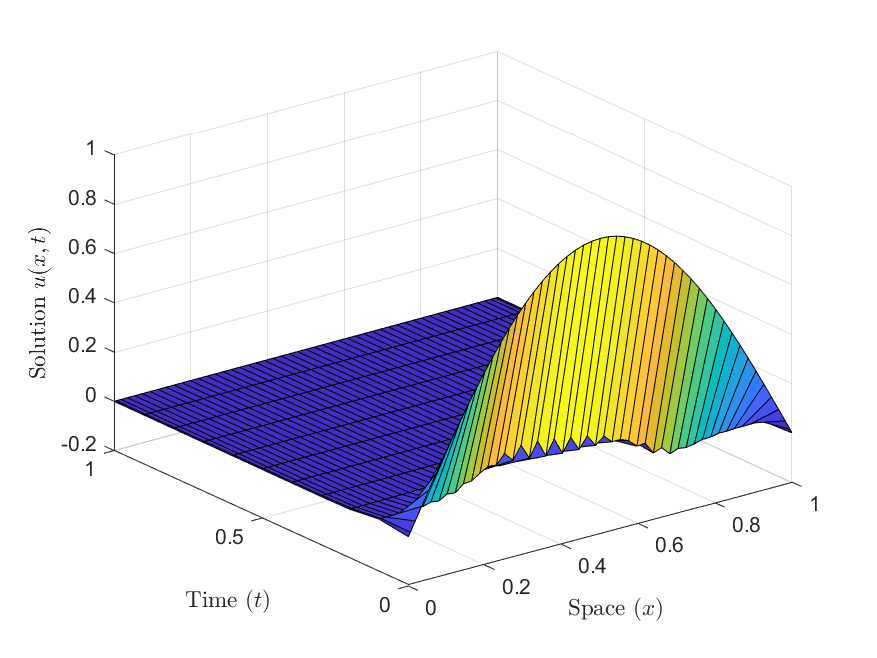}
	\end{minipage}\hfill
	% Second image
	\begin{minipage}{0.28\textwidth}
		\centering
		\includegraphics[width=\linewidth]{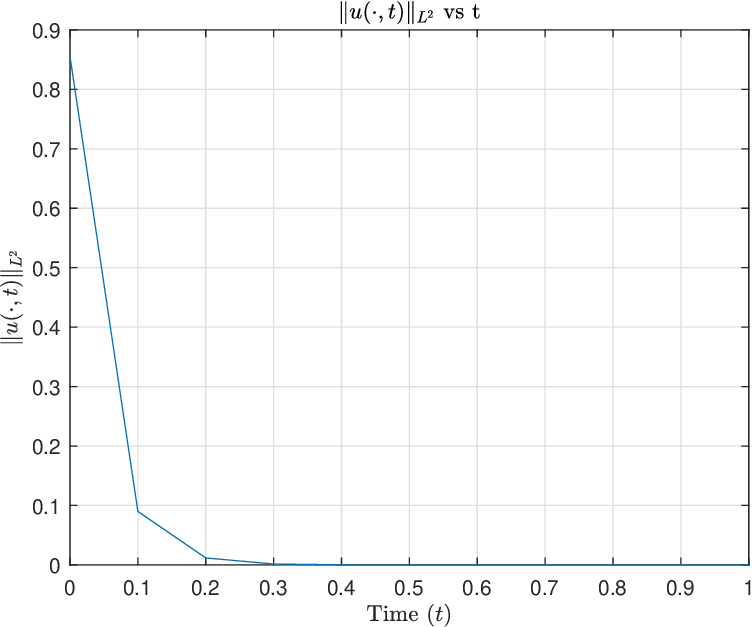} 
	\end{minipage}
	\caption{Time evolution of GKdVBH equation under control defined in \eqref{256} for $\nu = 1, \alpha=1,\beta=1,\delta=3,\gamma=0.5,\eta = 1, \mu = 0.1 $ and $u_{0}(x) = \sin(\pi x)$.}
	\label{fig8}
\end{figure}

\begin{figure}[h]
	
	\begin{minipage}{0.6\textwidth}
		\centering
		\includegraphics[width=\linewidth]{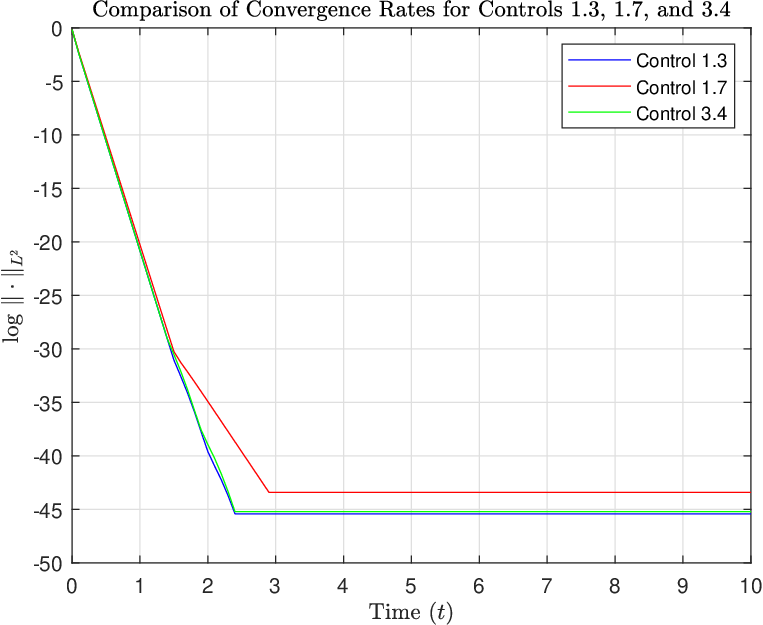} 
	\end{minipage}
	\caption{Time evolution of the log $\| u(\cdot, t) \|_{L^2}$ under the control \eqref{1.3}, \eqref{17} and \eqref{256} for $\delta=1$.}
	\label{fig9}
\end{figure}
\begin{figure}[h]
	
	\begin{minipage}{0.6\textwidth}
		\centering
		\includegraphics[width=\linewidth]{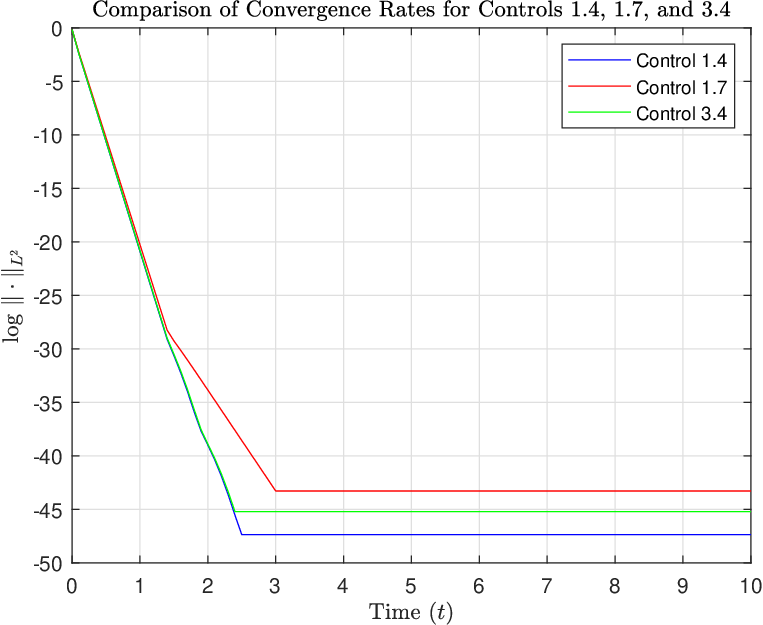} 
	\end{minipage}
	\caption{Time evolution of the log $\| u(\cdot, t) \|_{L^2}$ under the control \eqref{1p3}, \eqref{17} and \eqref{256} for $\delta=2$.}
	\label{fig10}
\end{figure}
\begin{figure}[h]
	
	\begin{minipage}{0.6\textwidth}
		\centering
		\includegraphics[width=\linewidth]{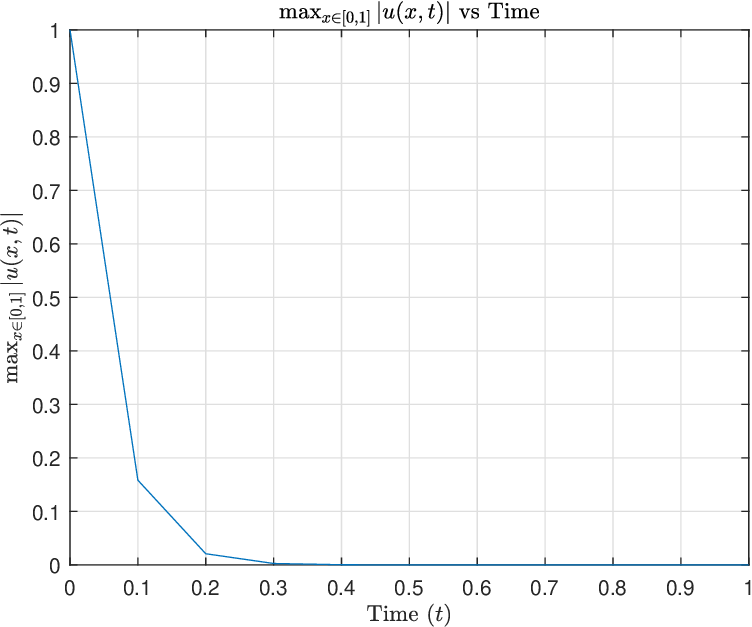} 
	\end{minipage}
	\caption{Time evolution of the $\max_{x \in [0,1]} |u(x,t)|$ under the control \eqref{256} for $\delta=1$.}
	\label{fig11}
\end{figure}

	\begin{appendix}
	\renewcommand{\thesection}{\Alph{section}}
	\numberwithin{equation}{section}
	\section{Some Useful Inequalities}\label{sec5}\setcounter{equation}{0} 
\begin{lemma}[Poincar\'e's inequality]\label{lem3.1}
	For any $w\in\C^1([0,1])$, we have 
	\begin{align}\label{eq0}
		\int_0^1w^2(x)dx\leq \left(\int_0^1w_x^2(x)dx\right).
	\end{align}
\end{lemma}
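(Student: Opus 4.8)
The plan is to exploit the boundary condition $w(0)=0$, which is implicit here through the ambient space $\Y=\{u\in\H^1(0,1):u(0)=0\}$ and is in fact indispensable: without it the constant function $w\equiv 1$ would violate the claimed bound, since then $\int_0^1 w^2=1$ while $\int_0^1 w_x^2=0$. First I would invoke the fundamental theorem of calculus to represent $w$ through its derivative: for every $x\in[0,1]$,
\begin{align*}
	w(x)=w(0)+\int_0^x w_x(s)\,\d s=\int_0^x w_x(s)\,\d s.
\end{align*}

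Next I would apply the Cauchy--Schwarz inequality to this integral representation, which gives
\begin{align*}
	|w(x)|^2=\left|\int_0^x w_x(s)\,\d s\right|^2\leq x\int_0^x w_x^2(s)\,\d s\leq x\int_0^1 w_x^2(s)\,\d s,
\end{align*}
where the last step merely enlarges the domain of integration. Integrating this pointwise bound over $x\in[0,1]$ and using $\int_0^1 x\,\d x=\tfrac12$ yields
\begin{align*}
	\int_0^1 w^2(x)\,\d x\leq\left(\int_0^1 x\,\d x\right)\int_0^1 w_x^2(s)\,\d s=\frac12\int_0^1 w_x^2(s)\,\d s\leq\int_0^1 w_x^2(x)\,\d x,
\end{align*}
which is exactly the assertion, with room to spare (the constant produced by this argument being $\tfrac12$).

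There is no genuine obstacle in this proof: it is a one-line consequence of the fundamental theorem of calculus combined with Cauchy--Schwarz. The only point that requires care is recognising that the hypothesis $w(0)=0$ is essential; the statement as printed should be read with this condition, inherited from the definition of $\Y$, since otherwise the inequality fails for nonzero constants. If one wished to weaken $w(0)=0$ to a zero-mean or a mixed boundary condition, a spectral (eigenfunction) argument would be required to recover the sharp constant, but for the crude constant $1$ that is all the paper ever uses, the elementary estimate above is entirely sufficient.
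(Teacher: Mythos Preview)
Your proof is correct and follows essentially the same approach as the paper: representing $w(x)$ via the fundamental theorem of calculus (using the implicit condition $w(0)=0$), applying Cauchy--Schwarz, and integrating over $[0,1]$. The only cosmetic difference is that you retain the factor $x$ from Cauchy--Schwarz to obtain the sharper constant $\tfrac12$, whereas the paper immediately bounds $x\leq 1$ and arrives directly at the constant $1$.
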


\begin{proof}
	For any $w\in\C^1[0,1]$, we know that 
	\begin{align*}
		w(x)=\int_0^xw_x(\zeta)d\zeta,
	\end{align*}
	so that 
	\begin{align*}
		w^2(x)=\left(\int_0^xw_x(\zeta)d\zeta\right)^2\leq\left(\int_0^1|w_x(\zeta)|^2d\zeta\right).
	\end{align*}
	Ingratiating the above inequality from $0$ to $1$, we have the  Poincar\'e inequality \eqref{eq0}.  By a density argument, the result \eqref{eq0} holds true for all $w\in\H^1(0,1)$. 
\end{proof} 
	
		\begin{lemma}[Agmon's inequality]\label{lem21}
		For any $w\in\C^1([0,1])$, the following inequality holds: 
		\begin{align}\label{21}
			\max_{x\in[0,1]}|w(x)|\leq \sqrt{2}\|w\|_{\L^2}^{1/2}\|w_x\|_{\L^2}^{1/2}.
		\end{align}
	\end{lemma}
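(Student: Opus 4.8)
The plan is to prove \eqref{21} by mirroring the proof of the companion Poincar\'e inequality (Lemma \ref{lem3.1}) given just above, applying the fundamental theorem of calculus not to $w$ itself but to the function $w^2$. First I would fix $x\in[0,1]$ and write, exactly as the Poincar\'e proof represents $w(x)=\int_0^x w_x(\zeta)d\zeta$,
\begin{align*}
	w^2(x)=2\int_0^x w(\zeta)w_x(\zeta)d\zeta.
\end{align*}
This turns the pointwise value of $|w|^2$ into a single integral of the product $w\,w_x$, which is the quantity that is controllable by the $\L^2$-norms appearing on the right-hand side of \eqref{21}.

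Next I would estimate this integral. Bounding the integrand by its absolute value, extending the domain of integration from $[0,x]$ to $[0,1]$, and applying the Cauchy--Schwarz inequality, I obtain
\begin{align*}
	w^2(x)\leq 2\int_0^1|w(\zeta)|\,|w_x(\zeta)|d\zeta\leq 2\|w\|_{\L^2}\|w_x\|_{\L^2}.
\end{align*}
Since the right-hand side is independent of $x$, taking the maximum over $x\in[0,1]$ and then the square root yields
\begin{align*}
	\max_{x\in[0,1]}|w(x)|\leq\sqrt{2}\,\|w\|_{\L^2}^{1/2}\|w_x\|_{\L^2}^{1/2},
\end{align*}
which is precisely \eqref{21}. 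As in Lemma \ref{lem3.1}, a density argument would then extend the estimate from $\C^1([0,1])$ to the full Sobolev space $\H^1(0,1)$, so that the inequality is available in the form actually used in the energy bounds \eqref{2a42} and \eqref{247}.

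The step that carries the weight is the representation $w^2(x)=2\int_0^x w\,w_x\,d\zeta$: it is the exact analogue, for $w^2$, of the anchoring identity $w(x)=\int_0^x w_x\,d\zeta$ used to establish Poincar\'e's inequality in Lemma \ref{lem3.1}, and it is what converts the pointwise supremum into a quantity bounded by the product of $\L^2$-norms. Everything after it reduces to one application of Cauchy--Schwarz and a square root, so the calculation is otherwise routine. I would take care to apply this identity in exactly the same regime as the preceding Poincar\'e lemma, so that the two inequalities are proved in a uniform style and plug directly into the estimates where both are invoked simultaneously.
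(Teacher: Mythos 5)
Your proof is correct and is essentially identical to the paper's own argument: the same representation $w^2(x)=2\int_0^x w(\zeta)w_x(\zeta)\,d\zeta$, followed by Cauchy--Schwarz on $[0,1]$, a maximum over $x$, a square root, and a density argument to pass to $\H^1(0,1)$. Note only that this anchoring identity (in both your write-up and the paper's) tacitly uses $w(0)=0$, exactly as in the Poincar\'e lemma --- which is harmless here since every application in the paper is to functions vanishing at $x=0$.
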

	\begin{proof}
		An application of H\"older's inequality yields 
		\begin{align}\label{22}
			w^2(x)&=2\int_0^xw(\zeta)w_x(\zeta)d\zeta\leq 2\left(\int_0^1|w(\zeta)|^2d\zeta\right)^{1/2}\left(\int_0^1|w_x(\zeta)|^2d\zeta\right)^{1/2},
		\end{align}
		for all $x\in[0,1]$. 
		Therefore, from \eqref{22}, we infer 
		\begin{align}\label{24}
			\max\limits_{x\in[0,1]}|w(x)|^2\leq 2\|w\|_{\L^2}\|w_x\|_{\L^2},
		\end{align}
		and \eqref{21} follows. By a density argument, the result \eqref{21} holds true for all $w\in\H^1(0,1)$. 
	\end{proof}

	\begin{lemma}\label{lema3}
		For any $w\in\C^1([0,1])$, the following inequality holds: 
		\begin{align}\label{A5}
			\max_{x\in[0,1]}|w(x)|\leq(\delta+2)^{\frac{1}{\delta+2}}\|w\|_{\L^{2(\delta+1)}}^{\frac{\delta+1}{\delta+2}}\|w_x\|_{\L^2}^{\frac{1}{\delta+2}}.
		\end{align}
	\end{lemma}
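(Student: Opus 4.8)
The plan is to recognize \eqref{A5} as an Agmon/Gagliardo--Nirenberg-type interpolation inequality and to prove it by the very same device used for Lemma \ref{lem21}, applied to the power $|w|^{\delta+2}$ instead of $w^2$. First I would clear the fractional exponents: raising both sides of \eqref{A5} to the power $\delta+2$ shows that \eqref{A5} is equivalent to
$$\max_{x\in[0,1]}|w(x)|^{\delta+2}\leq(\delta+2)\,\|w\|_{\L^{2(\delta+1)}}^{\delta+1}\|w_x\|_{\L^2},$$
so it suffices to prove this and then take the $(\delta+2)$-th root. This reformulation is the natural one because $|w|^{\delta+2}$ is differentiable with $\frac{d}{d\zeta}|w(\zeta)|^{\delta+2}=(\delta+2)|w(\zeta)|^{\delta}w(\zeta)w_x(\zeta)$, which is precisely the integrand the fundamental theorem of calculus will produce.

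Next I would fix $x\in[0,1]$ and integrate this derivative from $0$, exactly as in the identity $w^2(x)=2\int_0^x w(\zeta)w_x(\zeta)\,d\zeta$ used in \eqref{22}, to get
$$|w(x)|^{\delta+2}=(\delta+2)\int_0^x|w(\zeta)|^{\delta}w(\zeta)w_x(\zeta)\,d\zeta\leq(\delta+2)\int_0^1|w(\zeta)|^{\delta+1}|w_x(\zeta)|\,d\zeta.$$
Applying the Cauchy--Schwarz inequality (H\"older with conjugate exponents $2,2$) to the last integral and using $\left(\int_0^1|w(\zeta)|^{2(\delta+1)}\,d\zeta\right)^{1/2}=\|w\|_{\L^{2(\delta+1)}}^{\delta+1}$, I obtain
$$\int_0^1|w(\zeta)|^{\delta+1}|w_x(\zeta)|\,d\zeta\leq\left(\int_0^1|w(\zeta)|^{2(\delta+1)}\,d\zeta\right)^{1/2}\left(\int_0^1|w_x(\zeta)|^2\,d\zeta\right)^{1/2}=\|w\|_{\L^{2(\delta+1)}}^{\delta+1}\|w_x\|_{\L^2}.$$
Since the right-hand side is independent of $x$, taking the supremum over $x\in[0,1]$ and then the $(\delta+2)$-th root recovers \eqref{A5}. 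As in Lemmas \ref{lem3.1} and \ref{lem21}, a routine density argument then extends the estimate from $\C^1([0,1])$ to all $w\in\H^1(0,1)$.

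I do not anticipate any genuine difficulty, since the argument is a one-line variant of the Agmon estimate already established. The only point deserving care is the use of the boundary value at $x=0$: the clean identity $|w(x)|^{\delta+2}=(\delta+2)\int_0^x|w|^{\delta}w w_x\,d\zeta$ is the direct analogue of \eqref{22} and relies on the normalization $w(0)=0$, which is exactly the setting in which the lemma is invoked (the functions live in $\Y=\{u\in\H^1(0,1):u(0)=0\}$, and the control \eqref{256} enforces $u(0,t)=0$ in Theorem \ref{thm2.4}). Under this normalization one obtains precisely the stated constant $(\delta+2)^{\frac{1}{\delta+2}}$; for a general $\C^1$ function one would instead begin the integration at a point where $|w|$ is small and absorb an additive lower-order term, but it is the $w(0)=0$ case that yields the sharp form used in the applications.
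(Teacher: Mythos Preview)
Your proof is correct and follows essentially the same route as the paper's: write $|w(x)|^{\delta+2}$ via the fundamental theorem of calculus starting at $0$, apply H\"older's (Cauchy--Schwarz) inequality, take the supremum, and extract the root. You are also right to flag the implicit use of $w(0)=0$; the paper's own proof writes $w^{\delta+2}(x)=\int_0^x\partial_\zeta w^{\delta+2}(\zeta)\,d\zeta$ without comment, relying on the same normalization.
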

	\begin{proof}
		An application of H\"older's inequality yields 
		\begin{align}\label{A6}
			w^{\delta+2}(x)&=\int_0^x\partial_{\zeta}w^{\delta+2}(\zeta)d\zeta=(\delta+2)\int_0^xw^{\delta+1}(\zeta)w_x(\zeta)d\zeta\nonumber\\&\leq (\delta+2)\left(\int_0^1|w(\zeta)|^{2(\delta+1)}d\zeta\right)^{1/2}\left(\int_0^1|w_x(\zeta)|^2d\zeta\right)^{1/2},
		\end{align}
		for all $x\in[0,1]$. 
		Therefore, from \eqref{A6}, we infer 
		\begin{align}\label{A7}
			\max\limits_{x\in[0,1]}|w(x)|^{\delta+2}\leq (\delta+2)\|w\|_{\L^{2(\delta+1)}}^{\delta+1}\|w_x\|_{\L^2},
		\end{align}
		and \eqref{A5} follows. By a density argument, the result \eqref{A5} holds true for all $w\in\H^1(0,1)$. 
	\end{proof}
	\end{appendix}

\end{document}